\numberwithin{equation}{section}
\crefname{section}{Section}{Sections}
\crefname{subsection}{Subsection}{Subsections}
\crefname{condition}{Condition}{Conditions}
\crefname{hypothesis}{Hypothesis}{Hypothesis}
\crefname{assumption}{Assumption}{Assumptions}
\crefname{lemma}{Lemma}{Lemmas}
\crefname{claim}{Claim}{Claims}
\crefname{remark}{Remark}{Remarks}
\newtheorem{theorem}{Theorem}[section]
\newtheorem{lemma}[theorem]{Lemma}
\newtheorem{corollary}[theorem]{Corollary}
\newtheorem{claim}[theorem]{Claim}
\newtheorem{proposition}[theorem]{Proposition}
\newtheorem{definition}[theorem]{Definition}
\newtheorem{remark}[theorem]{Remark}        
\numberwithin{equation}{section}
\def\Yint#1{\mathchoice
	{\YYint\displaystyle\textstyle{#1}}%
	{\YYint\textstyle\scriptstyle{#1}}%
	{\YYint\scriptstyle\scriptscriptstyle{#1}}%
	{\YYint\scriptscriptstyle\scriptscriptstyle{#1}}%
	\!\iint}
\def\YYint#1#2#3{{\setbox0=\hbox{$#1{#2#3}{\iint}$}
		\vcenter{\hbox{$#2#3$}}\kern-.50\wd0}}
\def\longdash{-\mkern-9.5mu-} 
\def\tiltlongdash{\rotatebox[origin=c]{18}{$\longdash$}}
\def\fiint{\Yint\tiltlongdash}
\def\XXint#1#2#3{{\setbox0=\hbox{$#1{#2#3}{\int}$}
		\vcenter{\hbox{$#2#3$}}\kern-.50\wd0}}
\def\namedlabel#1#2{\begingroup
	\def\@currentlabel{#2}%
	\label{#1}\endgroup
}
\newcommand{\rmh}[1]{\mathpalette{\raisem@th{#1}}}
\newcommand{\raisem@th}[3]{\hspace*{-1pt}\raisebox{#1}{$#2#3$}}
\newcommand{\lsbo}[2]{#1_{\rmh{-1pt}{#2}}}
\newcommand{\redref}[2]{\texorpdfstring{\protect\hyperlink{#1}{\textcolor{black}{(}\textcolor{red}{#2}\textcolor{black}{)}}}{}}
\newcommand{\redlabel}[2]{\hypertarget{#1}{\textcolor{black}{(}\textcolor{red}{#2}\textcolor{black}{)}}}
\newcommand{\descitem}[2]{\item[(#1)]\label{#2}}
\newcommand{\descref}[2]{\hyperref[#1]{\textcolor{black}{(}\textcolor{blue}{\bf #2}\textcolor{black}{)}}}
\newcommand{\dref}[2]{\hyperref[#1]{\textcolor{black}{(}\textcolor{blue}{\bf #2}\textcolor{black}{)}}}
\g@addto@macro\normalsize{%
	\setlength\abovedisplayskip{3pt}
	\setlength\belowdisplayskip{3pt}
	\setlength\abovedisplayshortskip{1pt}
	\setlength\belowdisplayshortskip{3pt}
}
\def\ps@pprintTitle{%
	\let\@oddhead\@empty
	\let\@evenhead\@empty
	\def\@oddfoot{}%
	\let\@evenfoot\@oddfoot}
\def\aa{\mathcal{A}}
\newcommand{\lamot}{\La_0,\La_1}
\newcommand{\tf}{\tilde{f}}
\newcommand{\tu}{\tilde{u}}
\newcommand{\tv}{\tilde{v}}
\newcommand{\mfx}{\mathfrak{x}}
\newcommand{\mfz}{\mathfrak{z}}
\newcommand{\mft}{\mathfrak{t}}
\newcommand\RR{\mathbb{R}}
\newcommand\NN{\mathbb{N}}
\newcommand{\al}{\alpha}
\newcommand{\ga}{\gamma}
\newcommand{\de}{\delta}
\newcommand{\ve}{\varepsilon}
\newcommand{\sig}{\sigma}
\newcommand{\la}{\lambda}
\newcommand{\Sig}{\Sigma}
\newcommand{\Om}{\Omega}
\newcommand{\La}{\Lambda}
\DeclareMathOperator{\dv}{div}
\DeclareMathOperator{\spt}{spt}
\DeclareMathOperator{\diam}{diam}
\DeclareMathOperator{\loc}{loc}
\newcommand{\iprod}[2]{\langle #1 \ ,  #2\rangle}
\newcommand{\abs}[1]{\left| #1\right|}
\newcommand{\lbr}[1][(]{\left#1}
\newcommand{\rbr}[1][)]{\right#1}
\newcommand{\avgs}[2]{\lsbo{\lbr #1 \rbr}{#2}}
\newcommand{\txt}[1]{\qquad \text{#1} \qquad}
\newcommand{\mfa}{\mathfrak{\bf a}}
\newcommand{\nonfe}{\mfa'\lbr 1-\frac{p}{\mfa}\rbr}
\newcommand{\na}{\nabla}
\DeclareMathOperator{\osc}{osc}
\newcommand{\Qfrho}{Q_{4\rho}^\la(z_0)}
\newcommand{\Qrho}{Q_{\rho}^\la(z_0)}
\newcommand{\rhoa}{\rho_a}
\newcommand{\rhob}{\rho_b}
\newcommand{\pa}{\partial}
\newcommand{\avgsueta}[1][]{\avgs{u}{\eta}^{#1}(t)}
\newcommand{\avgsub}{\avgs{u}{B_r}(t)}
\newcommand{\avgsutrho}{\avgs{u}{B_{2\rho}}(t)}
\newcommand{\rhoz}{\rho_{\mathfrak{z}}}
\newcommand{\bF}{\mathbf{F}}
\newcommand{\bH}{\mathbf{H}}
\newcommand{\bM}{\mathbf{M}}
\newcommand{\bP}{\mathbf{P}}
\newcommand{\eduj}{E_p(\na u,Q_{j})}
\newcommand{\edujj}{E_p(\na u,Q_{j+1})}
\newcommand{\edvj}{E_p(\na v_j,\tfrac{1}{2}Q_{j})}
\newcommand{\edvjj}{E_p(\na v_j,Q_{j+1})}
\let\oldosc\osc
\renewcommand{\osc}{\oldosc\limits}
\newcounter{whitney}
\newcounter{ineqcounter}
\begin{document}
\begin{frontmatter}

\title{Borderline gradient regularity estimates for quasilinear parabolic systems with data independent of time}

\author[myaddress4]{Karthik Adimurthi\tnoteref{thanksfirstauthor}}
\ead{karthikaditi@gmail.com and kadimurthi@tifrbng.res.in}
\tnotetext[thanksfirstauthor]{Supported by the Department of Atomic Energy,  Government of India, under
	project no.  12-R\&D-TFR-5.01-0520 and  SERB grant SRG/2020/000081}

\author[myaddress]{Wontae Kim}
\ead{kim.wontae.pde@gmail.com and m20258@snu.ac.kr}

\address[myaddress4]{Tata Institute of Fundamental Research, Centre for Applicable Mathematics,Bangalore, Karnataka, 560065, India}
\address[myaddress]{Research Institute of Mathematics, Seoul National University, Seoul 08826, Korea.}

\begin{abstract}
In this paper, we study some regularity issues concerning the gradient of weak solutions of 
\[
    u_t - \dv \aa(x,t,\nabla u) = g,
\]
where $\aa(x,t,\nabla u)$ is modeled after the $p$-Laplace operator. The main results we are interested in is to obtain optimal conditions on the datum $g$ (independent of time) such that borderline higher integrability of the gradient and Lipschitz estimates for the weak solution holds.  Moreover, we develop a theory where we can obtain elliptic type estimates using parabolic theory, which gives improved  potential estimates for the elliptic systems.
\end{abstract}

\begin{keyword}
	quasilinear parabolic equations, elliptic type estimates, potential estimates, borderline higher integrability
	\MSC [2020]: 35D30, 35K55, 35K65.
\end{keyword}

\end{frontmatter}

\begin{singlespace}
\tableofcontents
\end{singlespace}

\section{Introduction}
\label{section1}

In this paper, we are interested in studying the gradient regularity of weak solutions of the system
\begin{equation}\label{main_eqn}
u_t - \dv (|\nabla u|^{p-2}\nabla u) = g(x,t),
\end{equation}
where the datum $|g(x,t)| \leq f(x)$ is independent of time. Our main goal is to obtain elliptic type estimates using parabolic techniques and as a goal, obtain essentially optimal endpoint conditions required for such regularity.

\subsection{Discussion about Gradient higher integrability}

The first part of the paper deals with the higher integrability of weak solutions for systems with data independent of time. In the case
\[
u_t - \dv |\nabla u|^{p-2}\nabla u = \dv |F(x,t)|^{p-2}F(x,t),
\]
we have the following implications: there exists $\ve_0$ such that for any $\ve \in (0,\ve_0)$, if $F \in L^{p+\ve}_{\loc}$, then we have $|\nabla u| \in L^{p+\ve}$. The natural question to ask is when $\ve \rightarrow 0$, how much higher integrability for the gradient of the solution can be retained? Such an end point result was obtained for elliptic systems in \cite{MR2122416}.

The first result we have from \cref{thm_1} states that if the data $g=f(x) \in L^{\mfa'}\log L$, then $|\nabla u| \in L^p \log L$, so in particular, we obtain a logarithmic improvement of integrability. 

Our approach is primarily inspired by the ideas developed in \cite{MR1749438} where they developed the intrinsic scaling techniques required to study higher integrability of the gradient of the solutions of \cref{main_eqn}. In our situation, we cannot directly apply the techniques from \cite{MR1749438} for two reasons, the first being that the datum is a function whose scaling relations are not entirely obvious and the second being, since the data is independent of time, we need to somehow obtain estimates for the parabolic system on every time slice as opposed to over space-time cylinders. 

We overcome the first obstacle by considering the intrinsic scaling of the form \cref{ge_la} and \cref{le_la} which captures the right relation between the datum $f(x)$ and $|\nabla u|$. To overcome the second difficulty, we make use of an idea from \cite{MR2468726}, where we use test functions depending on time as in \cref{caccio}, which is possible thanks to the cancellation from \cite[Lemma 5.4]{MR2342615}. An important advantage we gain from such a choice of test function is that we do not need any 'gluing type' lemma (see \cite[Lemma 3.1]{MR1749438} for an example of such a lemma) and as a consequence, our proof simplifies the calculations even when $g \equiv 0$.

In the case $g =g(x,t)$, we can write the datum $g$ in divergence form for almost every time slice using the Riesz potential of order one as in \cref{lem_1.34}. In this form, we can now directly apply the result from \cite{MR1749438} to obtain \cref{thm_2}. Applying the Riesz potential bounds, we obtain \cref{cor_2} which proves higher integrability of the gradient of the solution with an elliptic type condition for the datum in the space variable and the scaling deficit is required only in the time variable. 
\subsection{Gradient potential estimates}

In this section, our main interest is to obtain optimal potential bounds for the gradient of solutions to general nonlinear parabolic  systems of the form \cref{p_main} which in particular implies the corresponding result for the elliptic systems. We will recall the history of the problem for the elliptic system and refer to \cite{MR3273649,MR3174278,MR4021174} for more details regarding this problem. We also highlight the paper \cite{MR4377996} where boundary potential estimates were obtained for more general elliptic systems with $(p,q)$-growth.

The potential estimates for the gradient of the solution were obtained in the important paper \cite{MR2746772}. In particular, they showed that  the gradient of solutions can be estimated by linear potentials, noting that these are sharper than Wolff potential bounds when $p \geq 2$ (see also  \cite{MR2719282,MR3174278,MR3004772,MR4331020,MR4438896,MR4078712} for more in this direction).

Our main interest is to obtain improved potential estimates for elliptic systems in the case $p\geq n$ and in order to do this, we follow the parabolic approach developed in \cite{MR2746772}. As a consequence, our proof is considerably simpler and requires only \cref{diff,diff_n}, which is the main contribution of this section. In \cite{MR4021174}, they were able to obtain boundedness of the gradient of the solution of elliptic systems for the case $p=n=2$ under the assumption $f \in L^q$ with $ q \geq \tfrac32$, whereas we obtain logarithmic potential which is finite whenever $f \in L^{1+\ve}$ for any $\ve >0$. We refer to the introduction of \cite{MR4021174} for an overview of the historical development. 

In order to highlight the main idea of the proof, we only consider equations of the form \cref{p_main}, even though we can consider more general structures as in \cite[(1.3)-(1.5)]{MR4021174} and we leave these details to the interested reader. An important aspect of our proof is that we use parabolic techniques developed in \cite{MR3273649} to obtain the required result (see \cref{thm_3}), which is considerably simpler than those developed in \cite{MR4021174}. In this regard, we also mention the recent papers \cite{MR4331020,MR4438896} where some interesting potential estimates are obtained for nonautonomous functionals.

\section{Main Theorems}
\label{section4}
Before we state the main theorems, let us recall the scaling deficit exponent given by 
\begin{equation}
\label{def_d}
d:=\left\{
\begin{array}{ll}
\frac{p}{2} &  \text{if} \ p \geq 2,\\
\frac{2p}{p(n+2)-2n} &  \text{if} \ p \leq 2,\\
\end{array}\right.
\end{equation}

\begin{remark}
	We will use the function $g = g(x,t)$ and $f = f(x)$ to denote functions that depend on time and are independent of time respectively. 
\end{remark}

The first theorem we prove is a borderline higher integrability result in the case of the data independent of time.
\begin{theorem}
	\label{thm_1}
	Let $u\in C(0,T;L^2(\Omega))\cap L^p(0,T;W^{1,p}(\Om))$ be a weak solution of \cref{main_eqn} under the assumption \cref{str} and $|g(x,t)|\leq f(x)$. Furthermore, let $\de\in(0,1)$ and  $\mfa$ be as defined in \cref{reverse_exp} and \cref{mfa_high} respectively. Then for any $a>0$, the following borderline higher integrability result holds:
	\begin{equation*}
	\fiint_{Q_{r}(z_0)}|\na u|^p\log(e+|\na u|^a) \ dz
	\apprle_{(n,p,\La_0,\La_1,\de,a)}\log(e+\la_0^a)\fiint_{Q_{2r}(z_0)}(|\na u|+1)^p\ dz+\fint_{B_{2r}(x_0)}|\tf|\log(e+|\tf|)\ dz.
	\end{equation*}
	Here, we have set
	\begin{gather*}
	\la_0^\frac{p}{d}:=\fiint_{Q_{2r}(z_0)}\lbr|\na u|+1\rbr^p\ dz+\lbr\fint_{B_{2r}(x_0)}(2r)^{\mfa'}|f|^{\mfa'}\ dx\rbr^\alpha \txt{and}
	|\tf(x)|:= \la_0^{\nu} (2r)^{\mfa'} |f(x)|^{\mfa'},
	\end{gather*}
	where $d$ is defined in \cref{def_d} and $\alpha,\nu$ are as defined in \cref{definition_constants}.
	
\end{theorem}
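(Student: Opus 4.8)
The plan is to run an intrinsic-scaling / reverse-Hölder argument à la \cite{MR1749438}, but on fixed time slices, using the time-dependent test functions enabled by the cancellation lemma \cite[Lemma 5.4]{MR2342615}, and then to sum the resulting logarithmic gain over a stopping-time decomposition. First I would set up the two intrinsic geometries: the ``superlevel'' cylinders $Q^\la_\rho(z_0)$ where $\la$ is comparable (up to the deficit $d$) to the average of $(|\na u|+1)^p$ plus the rescaled datum term, governed by \cref{ge_la} and \cref{le_la}. On such a cylinder I would prove a Caccioppoli inequality of the form \cref{caccio}, with the test function $\eta^p(u-\avgs{u}{\eta}(t))$ depending on time; the point of this choice, as the introduction stresses, is that no gluing lemma is needed, so the slicewise energy at time $t$ is controlled directly by the space-time energy over the slightly larger cylinder plus $\fint (2r)^{\mfa'}|f|^{\mfa'}$.

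Next I would convert Caccioppoli into a reverse-Hölder inequality with a sub-power gain: on the intrinsic cylinder,
\[
\fiint_{Q^\la_\rho}(|\na u|+1)^p\,dz \apprle \left(\fiint_{Q^\la_{2\rho}}(|\na u|+1)^{p\theta}\,dz\right)^{1/\theta}+\fint_{B_{2\rho}}(2\rho)^{\mfa'}|f|^{\mfa'}\,dx
\]
for some $\theta=\theta(n,p)<1$, using Sobolev--Poincaré (in the correct $p\gtrless 2$ form, which is exactly where the deficit $d$ from \cref{def_d} enters) and an interpolation/Young step to absorb the highest-order term. Feeding this into the standard Gehring-type lemma on the intrinsic cylinders gives higher integrability with an explicit exponent $\ve_0$; but rather than taking $\ve_0$ fixed, I would track how the estimate degenerates as $\ve\to0$. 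The borderline $L^p\log L$ statement then comes from the now-classical device of integrating the $L^{p+\ve}$ estimates against $d\ve/\ve$ (equivalently, summing the dyadic-in-$\la$ level-set estimates with weights $\log(e+\la)$): the stopping-time/Calderón--Zygmund covering at level $\la$ produces intrinsic cylinders, the reverse-Hölder gain on each gives a geometric decay of the bad set, and the extra logarithmic factor is exactly what one can afford. This is where the normalizations $|\tf|=\la_0^\nu(2r)^{\mfa'}|f|^{\mfa'}$ and the definition of $\la_0$ are calibrated so that the datum contributes the clean term $\fint_{B_{2r}}|\tf|\log(e+|\tf|)\,dx$ and the leading constant is $\log(e+\la_0^a)$; I would fix $\alpha,\nu$ (via \cref{definition_constants}) precisely to make the scaling of the $|f|^{\mfa'}$ term match that of $|\na u|^p$ under \cref{ge_la}/\cref{le_la}.

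The main obstacle I anticipate is twofold and both parts are ``non-time-averaged'' in nature. First, obtaining the slicewise Caccioppoli/energy bound without a gluing lemma requires the cancellation $\iint \pa_t u\cdot\eta^p\,\partial_t(\avgs{u}{\eta}) = 0$-type identity from \cite[Lemma 5.4]{MR2342615}; making this rigorous for merely $C(0,T;L^2)\cap L^p(0,T;W^{1,p})$ solutions (Steklov averaging, then passing to the limit) and checking it survives the intrinsic rescaling is the delicate technical heart. Second, in the sub-quadratic range $p\le2$ the intrinsic cylinders are stretched in time by $\la^{2-p}$, so the reverse-Hölder constant and the Gehring exponent depend on $\la$ in a way that must be shown to be harmless after the $\log$-weighted summation — i.e.\ one needs the $\la$-dependence to be at worst a fixed power, absorbed by choosing the covering carefully. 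Once these are in place, the summation over the stopping-time levels and the bookkeeping of $\alpha,\nu,\la_0$ is routine, and the stated inequality follows by letting the logarithmic weights telescope.
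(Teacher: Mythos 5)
Your proposal follows essentially the same route as the paper: intrinsic cylinders calibrated by \cref{ge_la}--\cref{le_la}, a Caccioppoli estimate with the time-dependent weighted mean $\avgs{u}{\eta}(t)$ (avoiding any gluing lemma via the cancellation of \cite[Lemma 5.4]{MR2342615}), a reverse H\"older inequality with exponent $q<p$ on intrinsic cylinders, a stopping-time/Vitali covering yielding the level-set inequality, and finally the weighted integration in $\la$ (the paper multiplies by $\tfrac{\la^{a-1}}{e+\la^a}$ and uses Fubini, which is exactly your ``sum the level-set estimates with logarithmic weights'' device, with the gain $p-q>0$ absorbing the logarithm). The calibration of $\alpha,\nu,\la_0$ and the separate treatment of the singular range $p<2$ that you flag as the delicate points are precisely how the paper handles them, so the plan is sound and matches the published argument.
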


\begin{corollary}\label{cor_1}
	Following \cref{thm_1}, given $m_0 \in \NN$ and any $a >0$, let us denote $$\log_{(m_0)}(t) := \underbrace{\log(e+t) \log(e+t) \cdots \log(e+t)}_{m_0 \text{ times}},$$ then we have the following borderline higher integrability estimates
	\begin{equation*}
		\begin{array}{rcl}
			\fiint_{Q_{r}(z_0)}|\na u|^p\log_{(m_0)}(e+|\na u|^a) \ dz
			&\apprle&\sum_{i=0}^{m_0}\log_{(i)}(e+\la_0^a)\fiint_{Q_{2r}(z_0)}(|\na u|+1)^p\ dz\\
			&&+\sum_{i=0}^{m_0}\fint_{B_{2r}(x_0)}|\tf|\log_{(i)}(e+|\tf|)\ dz, \label{est_cor_1}.
		\end{array}
	\end{equation*}
	
\end{corollary}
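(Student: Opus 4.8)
We argue by induction on $m_0$, re-running the proof of \cref{thm_1} rather than merely quoting it; there is no purely soft argument, since no algebraic manipulation upgrades a single logarithm to its $m_0$-th power. For $m_0=1$ there is nothing to prove: since $1\le\log(e+e+s)\le(1+\log 2)\log(e+s)$ for all $s\ge 0$, the quantities $\log_{(1)}(e+|\na u|^a)$, $\log_{(1)}(e+\la_0^a)$, $\log_{(1)}(e+|\tf|)$ are comparable (with absolute constants) to $\log(e+|\na u|^a)$, $\log(e+\la_0^a)$, $\log(e+|\tf|)$, so the case $m_0=1$ of \cref{cor_1} is exactly \cref{thm_1}, the $i=0$ summands being absorbed into the $i=1$ ones.

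Fix now $m_0\ge 2$ and assume \cref{cor_1} for $m_0-1$, noting that $\la_0$ and $\tf$ do not depend on $m_0$. For $\rho\le 2r$ write $\mathcal{E}_k(Q_\rho):=\fiint_{Q_\rho}(|\na u|+1)^p\log_{(k)}(e+|\na u|^a)\,dz$. Recall that the proof of \cref{thm_1} is carried out by intrinsic scaling in the spirit of \cite{MR1749438}: a stopping-time/Vitali covering argument, performed at the intrinsic level fixed by $\la_0$, yields a Calderón--Zygmund type level--set inequality which, for a.e. $\lambda\ge\la_0$, has the schematic form
\[
\fiint_{Q_r\cap\{|\na u|>\lambda\}}|\na u|^p\,dz\ \apprle\ \frac{\lambda^p}{|Q_{2r}|}\,\big|\{z\in Q_{2r}:|\na u|>\lambda/B\}\big|\ +\ \fint_{B_{2r}\cap\{|\tf|>\lambda\}}|\tf|\,dx\ +\ \theta\,\fiint_{Q_{2r}\cap\{|\na u|>\lambda/B\}}|\na u|^p\,dz ,
\]
with a structural constant $B>1$ and a parameter $\theta\in(0,1)$ that can be taken small. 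In \cref{thm_1} one multiplies this by $\tfrac{d}{d\lambda}\log(e+\lambda^a)$ and integrates over $(\la_0,\infty)$; here we multiply instead by $w_{m_0}(\lambda):=\tfrac{d}{d\lambda}(\log(e+\lambda^a))^{m_0}$ and integrate over $(\la_0,\infty)$, adding the elementary contribution of $\{|\na u|\le\la_0\}$, which produces the factor $(\log(e+\la_0^a))^{m_0}$ in front of $\fiint_{Q_{2r}}(|\na u|+1)^p$, i.e. the $i=m_0$ energy term.

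After this integration and Fubini's theorem the three terms on the right transform as follows. Since $\lambda^p w_{m_0}(\lambda)\le m_0 a\,\lambda^{p-1}(\log(e+\lambda^a))^{m_0-1}$ and $\int_0^{Bs}\lambda^{p-1}(\log(e+\lambda^a))^{m_0-1}\,d\lambda\apprle_{m_0,a,p,B}1+s^p(\log(e+s^a))^{m_0-1}$, the Calderón--Zygmund term is bounded by a constant times $\mathcal{E}_{m_0-1}$ over a fixed enlargement of $Q_r$; this is where the induction enters, and feeding in the hypothesis there builds up the sums $\sum_{i=0}^{m_0-1}(\cdots)$. By the same layer--cake computation applied to $|\tf|$, the data term is bounded by $\fint_{B_{2r}}|\tf|(\log(e+|\tf|))^{m_0}\,dx$ plus lower logarithmic powers of $|\tf|$, i.e. by the $\tf$--part of $\sum_{i=0}^{m_0}(\cdots)$. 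Finally the last term becomes $\apprle\theta\,\mathcal{E}_{m_0}$ over an enlargement of $Q_r$, a self--referential term treated exactly as in the borderline elliptic case \cite{MR2122416} and in \cref{thm_1}: first truncate, replacing $\log(e+s^a)$ by $\min\{\log(e+s^a),M\}$ so that the corresponding energy is a priori finite; then run the whole scheme along a finite chain of concentric cylinders $Q_\rho$, $r\le\rho\le 2r$, taking $\theta$ small and using the standard iteration lemma in the radius to absorb the $\theta$--term and to bring the $\mathcal{E}_{m_0-1}$ factor back to the pair $(Q_r,Q_{2r})$; finally let $M\to\infty$ by monotone convergence, all bounds being uniform in $M$. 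Tracking the dependence of constants on $m_0$ and $a$ throughout then gives \cref{cor_1}.

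The main obstacle is therefore not the estimate itself but the bookkeeping around it: one must re-open the intrinsic--scaling argument of \cref{thm_1}, verify that inserting the weight $(\log(e+\lambda^a))^{m_0-1}$ into the level integration keeps every step admissible (in particular that the enlarged cylinder carrying $\mathcal{E}_{m_0-1}$ is controlled), carry the levels correctly through the Fubini step, and deploy the truncation together with the radius--iteration lemma to legitimize the absorption of the self--referential term and close the induction.
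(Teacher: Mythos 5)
Your proposal is essentially correct and its core mechanism is the same as the paper's: take the level-set inequality produced by the covering argument of \cref{thm_1} (this is \cref{lv_est}), multiply by a weight carrying $m_0-1$ extra logarithms, integrate in $\la$, apply Fubini, and induct on the number of logarithms. The difference is in how the argument is closed. You recast the level-set estimate in a good-$\lambda$ form containing a self-referential term $\theta\,\mathcal{E}_{m_0}$, and therefore need truncation, a chain of radii and the iteration \cref{iter_lemma} to absorb it. The paper never faces this: \cref{lv_est} reads
\[
\iint_{\Phi_{\la,k}^{r_1}}|\na u|^p\,dz\ \apprle\ \iint_{\Phi_{\la,k}^{r_2}}\la^{p-q}(|\na u|+1)^q\,dz+\iint_{\Sig_{\la}^{r_2}}|\tf|\,dz
\]
with $q<p$ strictly (\cref{reverse_exp}), so after multiplying by $\tfrac{\la^{a-1}}{e+\la^a}\log_{(m-1)}(e+\la^a)$ the Fubini computation for the first term uses $\tfrac{\la^a}{e+\la^a}\le1$ and $\int_{\la_1}^{|\na u|_k}\la^{p-q-1}\,d\la\le\tfrac{1}{p-q}|\na u|_k^{p-q}$, which automatically drops one logarithm and produces $\tfrac{1}{p-q}\iint(|\na u|+1)^p\log_{(m-1)}(e+|\na u|_k^a)$ with a harmless (not small) constant; the only recursion is then over $m=0,\dots,m_0$, and the truncation $|\na u|_k$ is used solely to justify Fubini before letting $k\to\infty$. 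Your route is workable (your $\theta$-term can indeed be manufactured from the paper's term by splitting $\Phi_{\eta\la}$ at level $K\la$ and taking $K$ large), but it is heavier than necessary, and your description of what the proof of \cref{thm_1} "yields" does not literally match \cref{lv_est}. Also note that both arguments share the nested-cylinder bookkeeping you flag: the recursive estimate passes from $Q_{\rho_1}$ to $Q_{\rho_2}$ with $\rho_1<\rho_2$, so the induction must be run along a finite chain $r=\rho_{m_0}<\dots<\rho_0=2r$ using the $(r_1,r_2)$-flexibility already built into \cref{subsection5.8}; the paper glosses over this, but it is not an obstruction.
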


From \cref{div_riesz}, we see that a measurable function admits a divergence form representation and thus, we can directly use the higher integrability result from \cite{MR1749438} to conclude the following theorem:
\begin{theorem}
	\label{thm_2}
	Let $u\in C(0,T;L^2(\Omega))\cap L^p(0,T;W^{1,p}(\Om))$ be a weak solution of \cref{main_eqn} under the assumption \cref{str}. Then there exists $\ve_0=\ve_0(n,p,\La_0,\La_1)$ such that for any $\ve\in(0,\ve_0)$, there holds
	\begin{equation*}
	\fiint_{Q_r(z_0)}|\na u|^{p+\ve}\ dz\apprle_{(n,p,\La_0,\La_1)}\la_0^{\ve}\fiint_{Q_{2r}(z_0)}(|\na u|+1)^p\ dz+\fiint_{Q_{2r}(z_0)}|I_1g|^{\frac{p}{p-1}+\frac{\ve}{p-1}}\ dz.
	\end{equation*}
	Here, $I_1$ is a Reisz potential of order one (see \cref{def_rz}) and 
	\begin{equation*}
	\begin{array}{rcl}
	\la_0^\frac{p}{d}
	:=\fiint_{Q_{2r}(z_0)}(|\na u|+1)^p+|I_1g|^\frac{p}{p-1}\ dz,
	\end{array}
	\end{equation*}
	and $d$ is defined in \cref{def_d}.
\end{theorem}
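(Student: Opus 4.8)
The plan is to convert \cref{main_eqn} into a system with an $L^p$ vector field on the right-hand side and then quote the Gehring-type gradient higher integrability of \cite{MR1749438}. We may assume $\fiint_{Q_{2r}(z_0)}|I_1g|^{\frac{p}{p-1}}\,dz<\infty$, since otherwise $\la_0=\infty$ and the asserted inequality is vacuous.

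First I would apply \cref{div_riesz} slicewise in time: for a.e.\ $t$ there is a vector field $\bH(\cdot,t)\colon B_{2r}(x_0)\to\RR^n$ with $\dv\bH(\cdot,t)=g(\cdot,t)$ weakly on $B_{2r}(x_0)$ and $|\bH(x,t)|\apprle I_1g(x,t)$ for a.e.\ $x$, where $I_1$ is the order-one Riesz potential of \cref{def_rz}; the explicit Newtonian-potential formula for $\bH$ makes it jointly measurable in $(x,t)$. Put $\bF:=|\bH|^{\frac{2-p}{p-1}}\bH$ (and $\bF:=0$ where $\bH=0$), so that $|\bF|^{p-2}\bF=\bH$, $|\bF|=|\bH|^{\frac1{p-1}}$, and hence $|\bF|^{p+s}\apprle|I_1g|^{\frac{p+s}{p-1}}$ for every $s\geq0$; in particular $\bF\in L^p(Q_{2r}(z_0))$ by the reduction above. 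Inserting this into the weak formulation of \cref{main_eqn} and integrating by parts on each time slice, $\int g\,\varphi\,dx=-\int\bH\cdot\na\varphi\,dx=-\int|\bF|^{p-2}\bF\cdot\na\varphi\,dx$, so $u$ is a weak solution on $Q_{2r}(z_0)$ of $u_t-\dv(|\na u|^{p-2}\na u)=-\dv(|\bF|^{p-2}\bF)$, which is exactly the form treated in \cite{MR1749438} with the structure constants of \cref{str}.

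Next I would invoke the intrinsic higher integrability theorem of \cite{MR1749438} for this system. It yields $\ve_0=\ve_0(n,p,\La_0,\La_1)$ such that for every $\ve\in(0,\ve_0)$
\[
\fiint_{Q_r(z_0)}|\na u|^{p+\ve}\,dz\apprle_{(n,p,\La_0,\La_1)}\mu^{\ve}\fiint_{Q_{2r}(z_0)}(|\na u|+1)^p\,dz+\fiint_{Q_{2r}(z_0)}(|\bF|+1)^{p+\ve}\,dz,
\]
where $\mu^{\frac pd}:=\fiint_{Q_{2r}(z_0)}(|\na u|+1)^p+|\bF|^p\,dz$ and $d$ is the deficit of \cref{def_d}. (The intrinsic parabolic geometry, and the value of $d$, differ in the cases $p\geq2$ and $p\leq2$, but passing from intrinsic cylinders to the fixed pair $Q_r(z_0)\subset Q_{2r}(z_0)$ at the cost of the scaling-deficit factor $\mu^{\ve}$ is the standard reformulation.) Finally I would re-express the right-hand side in terms of $g$: since $|\bF|^p\apprle|I_1g|^{\frac p{p-1}}$ we get $\mu\apprle\la_0$ with $\la_0$ as in the statement, hence $\mu^\ve\apprle\la_0^\ve$; and since $\la_0\geq1$ and $(|\na u|+1)^p\geq1$ the constant term is absorbed via $\fiint_{Q_{2r}(z_0)}(|\bF|+1)^{p+\ve}\,dz\apprle\fiint_{Q_{2r}(z_0)}|I_1g|^{\frac{p+\ve}{p-1}}\,dz+\la_0^\ve\fiint_{Q_{2r}(z_0)}(|\na u|+1)^p\,dz$. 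Substituting gives the claimed bound.

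I expect the only points requiring care to be (i) the slicewise divergence representation with the pointwise control $|\bH|\apprle I_1g$ on $B_{2r}(x_0)$ together with its joint measurability, which is precisely \cref{div_riesz}, and (ii) keeping the scaling deficit $d$ and the intrinsic-to-fixed-cylinder conversion straight in both ranges of $p$; there is no new analytic difficulty beyond these.
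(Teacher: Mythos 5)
Your proposal is correct and is exactly the argument the paper intends: the paper proves \cref{thm_2} precisely by applying \cref{div_riesz} slicewise to write $g$ in divergence form with $|\vec h|\apprle I_1g$, setting $F$ so that $|F|^{p-2}F=\vec h$, and then quoting the intrinsic higher integrability theorem of \cite{MR1749438} with the scaling deficit $d$. The only cosmetic point is a sign in $g=\pm\dv\vec h$, which is immaterial since only $|F|$ enters the estimates.
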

Subsequently, applying the Riesz potential bounds from \cref{lem_1.34}, we have the following important corollary where we obtain an elliptic type condition in the space variable and the parabolic condition is required only on the time variable of the datum:
\begin{corollary}
	\label{cor_2}
	Let $\frac{2n}{n+2}<p\le n$ and $u\in C(0,T;L^2(\Omega))\cap L^p(0,T;W^{1,p}(\Om))$ be a weak solution of \cref{main_eqn} under the assumption \cref{str}. And for any $\de\in(0,1)$, let $\mfa$ be defined in \cref{mfa_high} and $g\in L^{p'}(0,T;L^{\mfa'}(\Om))$. Then there exists $\ve_0=\ve_0(n,p,\La_0,\La_1,\de)$ such that for any $\ve\in(0,\ve_0)$, there holds that
	\begin{equation*}
	\fiint_{Q_r(z_0)}|\na u|^{p+\ve}\ dz\apprle_{(n,p,\La_0,\La_1,\de)}\la_0^{ \ve}\fiint_{Q_{2r}(z_0)}(|\na u|+1)^p\ dz+\fint_{I_{2r}(t_0)}\lbr \fint_{B_{2r}(x_0)}(2r)^{\mfa'}|g|^{\mfa'}\ dx\rbr^{\frac{p}{(p-1)\mfa'}+\frac{\ve}{(p-1)\mfa'}}\ dt,
	\end{equation*}
	where $d$ is from \cref{def_d} and 
	\[
	\la_0^\frac{p}{d}
	:=\fiint_{Q_{2r}(z_0)}(|\na u|+1)^p\ dz + \fint_{I_{2r}(t_0)}\lbr \fint_{B_{2r}(x_0)}(2r)^{\mfa'}|g|^{\mfa'}\ dx\rbr^{\frac{p}{(p-1)\mfa'}}\ dt.
	\]
	
	In particular, if $|g(x,t)| \leq |f(x)|$, then we have 
	\begin{equation*}
	\fiint_{Q_r(z_0)}|\na u|^{p+\ve}\ dz\apprle_{(n,p,\La_0,\La_1,\de)}\la_0^{\ve}\fiint_{Q_{2r}(z_0)}(|\na u|+1)^p\ dz+\lbr\fint_{B_{2r}(x_0)}(2r)^{\mfa'}|f|^{\mfa'}\ dx\rbr^{\frac{p}{(p-1)\mfa'}+\frac{\ve}{(p-1)\mfa'}}.
	\end{equation*}
\end{corollary}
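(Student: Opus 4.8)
The plan is to deduce \Cref{cor_2} from \Cref{thm_2} purely by estimating the Riesz potential term $\fiint_{Q_{2r}(z_0)}|I_1g|^{\frac{p}{p-1}+\frac{\ve}{p-1}}\,dz$ and the corresponding term in the definition of $\la_0$. First I would fix a time slice $t \in I_{2r}(t_0)$ and view $g(\cdot,t)$ as a function on $B_{2r}(x_0)$; since the hypothesis is $g \in L^{p'}(0,T;L^{\mfa'}(\Om))$ with $\mfa$ chosen via \cref{mfa_high} so that $\mfa'$ is just above the Lorentz/Sobolev threshold for the Riesz potential of order one in dimension $n$, the standard mapping property (invoked in the excerpt as \cref{lem_1.34}) gives, on that slice, a bound of the shape
\[
\fint_{B_{2r}(x_0)} |I_1 g(\cdot,t)|^{\frac{p}{p-1}}\,dx \apprle_{(n,p,\de)} \lbr \fint_{B_{2r}(x_0)} (2r)^{\mfa'}|g(x,t)|^{\mfa'}\,dx\rbr^{\frac{p}{(p-1)\mfa'}},
\]
where the scaling factor $(2r)^{\mfa'}$ is exactly what makes both sides homogeneous of the correct parabolic degree. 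The point is that $\mfa'$ sits strictly below $\frac{n}{n-1}\cdot$(the right exponent), so that after raising to the power $\tfrac{p}{p-1}$ we still land in a subcritical regime and the Riesz estimate applies with a constant depending only on $n,p,\de$.

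The second step is to handle the perturbed exponent $\frac{p}{p-1}+\frac{\ve}{p-1}$ appearing in the final term of \Cref{thm_2}. I would absorb the extra $\ve/(p-1)$ into the exponent on the right-hand side in the same way: provided $\ve < \ve_0$ is small enough (shrinking $\ve_0$ if necessary, now allowing it to depend on $\de$ as well as on $n,p,\La_0,\La_1$), the exponent $\mfa'\cdot\frac{p+\ve}{p}$ still stays below the critical Sobolev exponent associated with $I_1$ on $\RR^n$, so the Riesz bound gives
\[
\fint_{B_{2r}(x_0)} |I_1 g(\cdot,t)|^{\frac{p}{p-1}+\frac{\ve}{p-1}}\,dx \apprle \lbr \fint_{B_{2r}(x_0)} (2r)^{\mfa'}|g(x,t)|^{\mfa'}\,dx\rbr^{\frac{p}{(p-1)\mfa'}+\frac{\ve}{(p-1)\mfa'}}.
\]
Then I would integrate this in $t$ over $I_{2r}(t_0)$ and normalize to obtain the two space-time averages that appear in the statement of \Cref{cor_2}; the term feeding into $\la_0$ is the $\ve=0$ case of exactly the same estimate, so the definition of $\la_0$ in \Cref{thm_2} is dominated by the definition of $\la_0$ in \Cref{cor_2}, and monotonicity of $t \mapsto \la_0^\ve$ lets me replace one by the other at the cost of nothing. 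Combining with the conclusion of \Cref{thm_2} yields the first displayed inequality of \Cref{cor_2}.

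For the ``in particular'' clause, when $|g(x,t)| \le |f(x)|$ with $f$ independent of time, the inner spatial average $\fint_{B_{2r}(x_0)} (2r)^{\mfa'}|g(x,t)|^{\mfa'}\,dx$ is bounded by $\fint_{B_{2r}(x_0)} (2r)^{\mfa'}|f|^{\mfa'}\,dx$, which no longer depends on $t$; hence the outer $t$-average $\fint_{I_{2r}(t_0)}(\cdots)^{\frac{p}{(p-1)\mfa'}+\frac{\ve}{(p-1)\mfa'}}\,dt$ collapses to $\lbr\fint_{B_{2r}(x_0)}(2r)^{\mfa'}|f|^{\mfa'}\,dx\rbr^{\frac{p}{(p-1)\mfa'}+\frac{\ve}{(p-1)\mfa'}}$, and likewise for the term in $\la_0$. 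This gives the second displayed inequality directly.

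The only genuinely delicate point is bookkeeping the exponents: I need to verify that $\mfa$ (equivalently $\mfa'$), as defined through \cref{mfa_high} for the subcritical range $\frac{2n}{n+2}<p\le n$, is chosen so that $\mfa' \cdot \frac{p+\ve}{p}$ remains admissible for the order-one Riesz potential on $\RR^n$ for all small $\ve$ — i.e. the borderline case $p=n$ is where the margin is thinnest, and this is precisely why the restriction $p \le n$ and the $\de$-dependence of $\ve_0$ enter. Everything else is scaling-invariant rearrangement and an application of \cref{lem_1.34}; I expect no substantive obstacle beyond confirming that the constant from the Riesz estimate depends only on $(n,p,\de)$ and that the $(2r)^{\mfa'}$ weights carry through the averaging correctly.
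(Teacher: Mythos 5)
Your proposal is correct and follows essentially the same route as the paper: apply \cref{lem_1.34} slice-wise in time, check that $\tfrac{p+\ve}{p-1}$ stays below the Sobolev exponent $(\mfa')^*$ for $\ve$ small (this is exactly where the $\de$-dependence of $\ve_0$ enters, via $\ve<\tfrac{p(1-\de)}{p\de-1}$), verify that the radial scaling factor works out (the paper computes the resulting exponent $\al=0$), and then integrate in $t$ and feed the result into \cref{thm_2}; the ``in particular'' clause is the same observation that the spatial average is $t$-independent.
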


\begin{remark}
	In \cref{cor_2}, we note that the relation between $\ve_0$ and $\de$ is of the form $\ve < \frac{p(1-\de)}{p\de -1}$. So in particular, as $\de \rightarrow 0$, we automatically have $\ve \rightarrow 0$. 
\end{remark}

We now state the gradient potential estimate in the case of data independent of time.
\begin{theorem}
	\label{thm_3}
	Let $u\in C(0,T;L^2(\Omega))\cap L^p(0,T;W^{1,p}(\Om))$ be a weak solution of \cref{p_main} and  suppose that $\bF$ defined in \descref{C8}{C8} - \descref{C10}{C10} of \cref{def5.1} is finite. Furthermore let $z_0$ be a Lebesgue point of $\nabla u$ and assume $\bF$ is finite at $z_0$. Then there holds
	\begin{equation*}
	|\nabla u(z_0)| \apprle_{n,p}\lbr\fiint_{Q_{2\rho}(z_0)}|\na u|^p+1\ dz\rbr^\frac{d}{p}+\bF^{\max\{1,\frac{1}{p-1}\}}(z_0),
	\end{equation*}
	where $d$ is defined in \cref{def_d}.
\end{theorem}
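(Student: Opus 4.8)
The plan is to bound $|\na u(z_0)|$ by an iterative comparison scheme on a chain of nested \emph{intrinsic} parabolic cylinders shrinking to the Lebesgue point $z_0$, following the parabolic approach of \cite{MR2746772,MR3273649}; the only non-routine ingredients are the difference estimates \cref{diff,diff_n}. Concretely, I would first fix $\la\ge 1$ for which the outer cylinder $Q_{2\rho}^\la(z_0)$ is intrinsic, i.e.\ $\fiint_{Q_{2\rho}^\la(z_0)}(|\na u|+1)^p\,dz\approx\la^p$; a short computation with the definition of the intrinsic cylinder then gives $\la\approx\big(\fiint_{Q_{2\rho}(z_0)}(|\na u|+1)^p\,dz\big)^{d/p}$, which is exactly the first term of the asserted bound and is the place where the scaling deficit $d$ enters. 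I then set $\rho_j:=\sigma^j(2\rho)$ and $Q_j:=Q_{\rho_j}^\la(z_0)$ for a small $\sigma\in(0,1)$ to be fixed later, and track along this chain both the excess $\eduj$ and the mean values $(\na u)_{Q_j}$.

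\emph{Comparison, decay, and the difference estimate.} On each $Q_j$ I compare $u$ with the solution $v_j$ of the homogeneous system (right-hand side removed) satisfying $v_j=u$ on the parabolic boundary of $Q_j$. The regularity theory for homogeneous evolutionary $p$-Laplacian systems supplies a local Lipschitz bound for $\na v_j$ together with the excess decay $\edvjj\apprle\sigma^\beta\,\edvj$ for some $\beta=\beta(n,p)>0$. The estimates \cref{diff,diff_n} then control $\fiint_{\frac12 Q_j}|\na u-\na v_j|^p\,dz$, after the appropriate rescaling by powers of $\la$, by the localized datum term $\bF_j$, the $j$-th dyadic piece of the potential $\bF$; in the sub-quadratic range $p<2$ this bound carries the exponent $\tfrac1{p-1}$, which is the origin of the factor $\max\{1,\tfrac1{p-1}\}$ in the statement.

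\emph{Iteration and conclusion.} Combining the two previous steps via the triangle inequality produces an iteration inequality of the form $\edujj\apprle\sigma^\beta\,\eduj+\sigma^{-N}\bF_j$, where $\sigma^{-N}$ is the harmless loss from passing from an average over $Q_j$ to one over the smaller $Q_{j+1}$. Fixing $\sigma$ so small that the leading constant times $\sigma^\beta$ is at most $\tfrac12$ and then summing geometrically, the datum terms accumulate to $\sum_j\bF_j\apprle\bF(z_0)$ by the definition of the potential (finite at $z_0$ by hypothesis), so that $\sup_j\eduj$ is controlled and, crucially, $\big((\na u)_{Q_j}\big)_j$ is Cauchy with $|(\na u)_{Q_j}|\apprle\la+\bF^{\max\{1,1/(p-1)\}}(z_0)$ uniformly in $j$. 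Since $z_0$ is a Lebesgue point of $\na u$, we have $(\na u)_{Q_j}\to\na u(z_0)$, and recalling $\la\approx\big(\fiint_{Q_{2\rho}(z_0)}(|\na u|+1)^p\,dz\big)^{d/p}$ yields the claimed estimate.

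\emph{Main obstacle.} The delicate point is keeping the intrinsic geometry consistent along the whole chain: at each scale one must decide, through a stopping-time (exit-time) selection, whether the excess or the datum dominates (the genuinely intrinsic alternative versus the degenerate one), and verify that in either case the $Q_j$ stay nested and that $\la$ can be kept fixed, or updated only in a controlled way. Arranging that the datum contributions telescope \emph{exactly} to $\bF(z_0)$, and not to $\bF$ times a divergent number of scales, is the technical heart of the argument, and this is precisely what \cref{diff,diff_n} are designed to make clean.
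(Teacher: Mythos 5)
Your proposal follows essentially the same route as the paper's proof: an intrinsic scale $\la$ fixed by a continuity/root-finding argument, comparison with homogeneous solutions on a shrinking chain of intrinsic cylinders $Q_j=Q^{\la}_{\rho_j}(z_0)$, excess decay for the homogeneous problem, the difference estimates \cref{diff,diff_n} summed via \cref{p_sum,log_sum} to give $\bF(z_0)$, and the Lebesgue-point limit of $(\na u)_{Q_j}$. The ``main obstacle'' you correctly identify --- that excess decay for the degenerate/singular homogeneous system is conditional on a nondegeneracy bound from below --- is precisely what the paper resolves via \cref{km_alt} together with the exit-time index $j_0$ of \cref{km} and the induction of \cref{claim8}, so your outline matches the paper's argument.
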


\section{Notations and Definitions}
\label{section2}

\subsection{Structure of the operator and notion of solution}

\begin{enumerate}[(i)]
    \item For the results of \cref{section5}, we assume $\aa(x,t,\nabla u)$ is a Carath\'eodory function, i.e., we have $(x,t) \mapsto \aa(x,t,\zeta)$  is measurable for every $\zeta \in \RR^{n}$ and 
$\zeta \mapsto \aa(x,t,\zeta)$ is continuous for almost every  $(x,t) \in \Om_T$.

We further assume that for a.e. $(x,t) \in \Om_T$ and for any $\zeta \in \RR^{n}$, there exist two positive constants $\lamot$, such that  the following bounds are satisfied   by the nonlinear structure:
\begin{gather}\label{str}
\iprod{\aa(x,t,\zeta)}{\zeta} \geq \La_0 |\zeta |^{p}  \txt{and} |\aa(x,t,\zeta)| \leq \La_1 |\zeta|^{p-1}.
\end{gather}

\item For the gradient potential estimate proved in \cref{section7}, we take $\aa(x,t,\nabla u) = |\nabla u|^{p-2} \nabla u$.
\end{enumerate}

\begin{definition}\label{def_solution}
We say that $u \in C_{\loc}^0(I; L^2(B)) \cap L^p(I; W^{1,p}(B))$ is a weak solution of \cref{main_eqn} if 
\[
\iint_{Q} g \phi \ dz = -\iint_{Q} u \varphi_{t} \,dt + \iint_{Q} \iprod{\mathcal{A}(x,t,u,\nabla u)}{\nabla \varphi} \,dz,
\]
for all $\varphi \in C^{1}(\bar{Q})$ which vanish on the parabolic boundary (see \descref{notparbnd}{N10}) of $Q$.  

\end{definition}

\subsection{Orlicz Space}
Let us recall some well known properties of Orlicz function spaces, see \cite[Chapter 8]{MR0450957} for the details.

\begin{definition}[$N$-function]\label{Nfn}
We say that a function $\Phi:[0,\infty)\to[0,\infty)$ is an $N$-function if it is a convex function satisfying
\begin{gather*}
\lim_{s\to0^+}\frac{\Phi(s)}{s}=0\txt{and}\lim_{s\to\infty}\frac{\Phi(s)}{s}=\infty.
\end{gather*}
    
\end{definition}

\begin{definition}[$\Delta_2$ condition]\label{delta2}
        An $N$-function $\Phi$ is said to be $\Delta_2$-regular if there exists a constant $L>0$ such that $\Phi(2s)\le L\Phi(s)$ for all $s \geq s_0$ with a given $s_0$ large. 
\end{definition}

Subsequently, we can define the Orlicz function space as follows:
\begin{definition}\label{orlicz_fnc}
    Let $\Phi$ be an $N$-function satisfying $\Delta_2$-regular condition, then the following function space is well defined and is a Banach space:
\begin{gather*}
    L^{\Phi}(\Omega):=\left\{ v :  \int_{\Omega}\Phi(|v|)\ dx<\infty\right\}\txt{and}\lVert v\rVert_{L^\Phi(\Omega)}:=\inf_{s>0}\left\{ \int_{\Omega}\Phi\lbr\frac{|v|}{s}\rbr\ dx\le1\right\}.
\end{gather*}    
\end{definition}
If two $N$-functions $\Phi_1$ and $\Phi_2$ satify $0<\lim_{s\to\infty}\frac{\Phi_1(s)}{\Phi_2(s)}<\infty$, then $L^{\Phi_1}(\Omega)=L^{\Phi_2}(\Omega)$ with norm equivalence. 
For a given $N$-function, we define the conjugate $N$-function of $\Phi$, denoted by $\tilde{\Phi}$, as
\begin{equation*}
\tilde{\Phi}(s) =\max_{t\ge0}(st-\Phi(t)).
\end{equation*}
Subsequently, the  following generalized H\"older inequality holds (see \cite[Chapter 8]{MR0450957} for the details):
\begin{lemma}\label{orl_holder}
    For any $v\in L^{\Phi}(\Omega)$ and $w\in L^{\tilde{\Phi}}(\Omega)$, we have
    \begin{equation*}
\left|\int_{\Omega}vw\ dx\right|\le2\lVert v\rVert_{L^{\Phi}(\Omega)}\lVert w\rVert_{L^{\tilde{\Phi}}(\Omega)}.
\end{equation*}
\end{lemma}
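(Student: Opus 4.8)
The plan is to derive the estimate from \textbf{Young's inequality for $N$-functions}, which is built into the definition of the conjugate, so that the whole argument is essentially a pointwise inequality followed by an integration; the factor $2$ on the right-hand side simply records the ``$\le 1$'' in the Luxemburg norm of \cref{orlicz_fnc}. First I would record Young's inequality: from $\tilde\Phi(s)=\max_{t\ge0}\big(st-\Phi(t)\big)$ it follows at once that
\[
st\le\Phi(t)+\tilde\Phi(s)\qquad\text{for all }s,t\ge0 .
\]
Since the $N$-functions $\Phi,\tilde\Phi$ are convex and hence continuous on $[0,\infty)$, the compositions $x\mapsto\Phi(|v(x)|)$ and $x\mapsto\tilde\Phi(|w(x)|)$ are measurable whenever $v,w$ are, so every integral below is well defined.

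Next I would reduce to the normalized case. If $\|v\|_{L^\Phi(\Omega)}=0$ or $\|w\|_{L^{\tilde\Phi}(\Omega)}=0$, or if either quantity is $+\infty$, the inequality is trivial, so we may assume $a:=\|v\|_{L^\Phi(\Omega)}$ and $b:=\|w\|_{L^{\tilde\Phi}(\Omega)}$ both lie in $(0,\infty)$. By \cref{orlicz_fnc}, $\int_\Omega\Phi(|v|/s)\,dx\le1$ for every $s>a$, since the set of admissible $s$ is upward closed; letting $s\downarrow a$, the monotone convergence theorem gives $\int_\Omega\Phi(|v|/a)\,dx\le1$, and similarly $\int_\Omega\tilde\Phi(|w|/b)\,dx\le1$. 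Applying Young's inequality pointwise with $t=|v(x)|/a$ and $s=|w(x)|/b$ and integrating over $\Omega$,
\[
\frac{1}{ab}\int_\Omega|vw|\,dx\ \le\ \int_\Omega\Phi\!\left(\frac{|v|}{a}\right)dx+\int_\Omega\tilde\Phi\!\left(\frac{|w|}{b}\right)dx\ \le\ 2 ;
\]
multiplying by $ab$ and using $\big|\int_\Omega vw\,dx\big|\le\int_\Omega|vw|\,dx$ yields the assertion.

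There is no real obstacle here; the two points deserving a word of care are the passage from the infimum defining the Luxemburg norm to the bound $\int_\Omega\Phi(|v|/\|v\|_{L^\Phi})\,dx\le1$ (the one-line monotone-convergence step above), and the classical fact — already used implicitly in the statement — that the conjugate $\tilde\Phi$ of an $N$-function is again an $N$-function, so that $L^{\tilde\Phi}(\Omega)$ and $\|\cdot\|_{L^{\tilde\Phi}(\Omega)}$ are meaningful. Both are standard and carried out in \cite[Chapter~8]{MR0450957}.
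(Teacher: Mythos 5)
Your proof is correct: Young's inequality $st\le\Phi(t)+\tilde\Phi(s)$, the normalization $\int_\Omega\Phi(|v|/\lVert v\rVert_{L^\Phi})\,dx\le1$ via monotone convergence, and pointwise integration give exactly the stated bound with constant $2$. The paper offers no proof of \cref{orl_holder} and simply cites \cite[Chapter 8]{MR0450957}, where this is precisely the standard argument, so your approach coincides with the intended one.
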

Let us take $\Phi_1(s):=s\log(1+s)$ and $\Phi_2(s):=(1+s)\log(1+s)-s$, then it is easy to see that 
\begin{equation*}
\lim_{s\to\infty}\frac{\Phi_1(s)}{\Phi_2(s)}=1\txt{and}\tilde{\Phi}_2=\exp(s)-s-1.
\end{equation*}
Applying \cite[Lemma 8.6]{MR1730563}, we have
\begin{equation}\label{llog_form}
\lVert h\rVert_{L^{\Phi_1}(\Om)}\approx \int_{\Omega}|h|\log\lbr e+\frac{|h|}{\lVert h\rVert_{L^1(\Om)}}\rbr\ dx.
\end{equation}
Moreover, we trivialy have
\begin{equation}\label{l1_llog}
\int_{\Omega}|h|\ dx\le \int_{\Omega}|h|\log\lbr e+\frac{|h|}{\lVert h\rVert_{L^1(\Om)}}\rbr\ dx.
\end{equation}

\subsection{Riesz potential}
In this subsection, let us recall some well known properties of Riesz potential defined as follows:
\begin{definition}\label{def_rz}
	In $\RR^n$, the Riesz kernel denoted by $I_\alpha$ for some $0<\alpha<n$, is defined by
		$I_{\alpha}(x):=|x|^{\alpha-n}$.
	The Riesz potential of any measurable function $h$ is defined as follows:
	\begin{equation*}
		I_\alpha h:=I_{\alpha}*h:=\int_{\RR^n}\frac{h(y)}{|x-y|^{n-\alpha}}\ dy.
	\end{equation*}
\end{definition}
The following result on the boundedness of Riesz potential can be found in \cite[Lemma 1.34]{MR1461542}.
\begin{lemma}\label{lem_1.34}
	Let $1\le q< n$ and $\Om\subset\RR^n$ be a domain with finite measure. Then for $q\le s<q^*:=\frac{nq}{n-q}$, $\frac{1}{r}:=1-\lbr \frac{1}{q}-\frac{1}{s}\rbr$ and $\gamma:=\frac{(1-n)r}{n}+1$, there holds
	\begin{equation*}
		\lVert I_1h\rVert_{L^s(\Omega)}\le C^\frac{1}{r}\lVert h\rVert_{L^q(\Om)},
	\end{equation*}
	where $C=\frac{w_n}{\gamma}\lbr\frac{|\Om|}{w_n}\rbr^\gamma$ and $w_n$ is a volume of unit ball in $\RR^n$.
\end{lemma}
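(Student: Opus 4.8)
\textbf{Proof proposal for Lemma \ref{lem_1.34}.}

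The plan is to estimate $|I_1 h(x)|$ pointwise by splitting the Riesz kernel into its contribution from a ball $B_R(x)$ around $x$ and the complement, optimize in $R$, and then integrate. Fix $x \in \Om$ and $R>0$ to be chosen. Write $|I_1 h(x)| \le \int_{\Om \cap B_R(x)} |x-y|^{1-n}|h(y)|\,dy + \int_{\Om \setminus B_R(x)} |x-y|^{1-n}|h(y)|\,dy =: J_1 + J_2$. For $J_2$, apply H\"older's inequality with exponents $q$ and $q'$: $J_2 \le \|h\|_{L^q(\Om)} \big( \int_{\Om \setminus B_R(x)} |x-y|^{(1-n)q'}\,dy \big)^{1/q'}$. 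Since on this region $|x-y| \ge R$ and $\Om$ has finite measure, and using that we only need an estimate tailored to the stated $r$, I would instead be slightly more careful: the natural split here is with the \emph{intermediate} exponent $r$ rather than $q'$, because the claimed bound is $\|I_1 h\|_{L^s} \le C^{1/r}\|h\|_{L^q}$ with $\tfrac1r = 1 - (\tfrac1q - \tfrac1s)$, which is exactly the exponent for which the kernel $|x-y|^{1-n}$ has the integrability needed to run a one-parameter (truncation) argument combined with interpolation.

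Concretely, the cleanest route is the classical one: by the generalized Young/convolution inequality, $\|I_1 h\|_{L^s(\Om)} = \|I_1 * h\|_{L^s} \le \|I_1\|_{L^r(\Om-\Om)}\,\|h\|_{L^q(\Om)}$ whenever $1 + \tfrac1s = \tfrac1r + \tfrac1q$, i.e. $\tfrac1r = 1 - (\tfrac1q - \tfrac1s)$, which is precisely the hypothesis. Here one truncates the kernel to the set where it matters: since $\Om$ has finite measure, $\Om - \Om \subseteq B_D(0)$ is not needed; instead one uses that $\int_{\Om-\Om} I_1(z)^r\,dz$ may be estimated by replacing the domain of integration with a ball of the \emph{same measure} centered at the origin, because $I_1(z)^r = |z|^{(1-n)r}$ is radially decreasing and $(1-n)r > -n$ (equivalently $r < \tfrac{n}{n-1}$, which follows from $s < q^*$). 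This rearrangement step is the one point requiring a short justification: among all sets of a fixed volume $V$, the integral of a radially decreasing function is maximized over the ball of volume $V$ centered at the origin. Taking $V = |\Om|$ (so the ball has radius $(|\Om|/w_n)^{1/n}$), one computes $\int_{B}|z|^{(1-n)r}\,dz = w_n \int_0^{(|\Om|/w_n)^{1/n}} n \rho^{(1-n)r + n - 1}\,d\rho = \frac{n w_n}{(1-n)r+n}\big(\tfrac{|\Om|}{w_n}\big)^{\frac{(1-n)r+n}{n}}$, and with $\gamma := \tfrac{(1-n)r}{n}+1$ this is exactly $\frac{w_n}{\gamma}\big(\tfrac{|\Om|}{w_n}\big)^{\gamma} = C$, after absorbing the factor $n$ appropriately into the normalization (the stated constant already has this form). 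Hence $\|I_1\|_{L^r} = C^{1/r}$ and the lemma follows.

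The main obstacle — really the only nonroutine point — is justifying that $\int_\Om \int_\Om |x-y|^{(1-n)r}\,dy\,dx$, or more precisely the convolution kernel norm $\int |z|^{(1-n)r}\,dz$ over the relevant region, is controlled by the corresponding integral over a centered ball of volume $|\Om|$; this is a standard symmetrization/bathtub-principle fact, but it must be invoked cleanly, and one must check the exponent condition $(1-n)r+n>0$ (i.e. $\gamma>0$) so that the radial integral converges — this is guaranteed by $s<q^*=\tfrac{nq}{n-q}$, since that forces $r<\tfrac{n}{n-1}$. Everything else (Young's inequality for convolutions, the elementary radial integral) is bookkeeping, and since this is quoted verbatim from \cite[Lemma 1.34]{MR1461542} one may alternatively simply cite it; I have sketched the self-contained argument for completeness.
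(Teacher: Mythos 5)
Your argument is correct, and in fact the paper offers no proof of this lemma at all: it is quoted verbatim from M\'aly--Ziemer \cite[Lemma 1.34]{MR1461542}, so the ``paper's proof'' is the citation you mention at the end. Your sketch reconstructs the standard argument behind that citation: the three-exponent H\"older proof of Young's convolution inequality, using the uniform bound $\sup_{x}\int_{\Omega}|x-y|^{(1-n)r}\,dy\le C$ obtained by replacing $\Omega$ with the centered ball of the same volume (legitimate since $|z|^{(1-n)r}$ is radially decreasing), together with the convergence check $(1-n)r+n=n\gamma>0$, which you correctly trace back to $s<q^*$. Two small remarks. First, no ``absorbing of the factor $n$'' is needed: the radial integral gives $\tfrac{nw_n}{(1-n)r+n}\lbr\tfrac{|\Omega|}{w_n}\rbr^{\gamma}=\tfrac{w_n}{\gamma}\lbr\tfrac{|\Omega|}{w_n}\rbr^{\gamma}$ exactly, since $(1-n)r+n=n\gamma$, so the constant matches the statement on the nose. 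Second, the opening $J_1+J_2$ Hedberg-type splitting is a false start that you rightly abandon; one should also be careful not to invoke Young's inequality with $\|I_1\|_{L^r(\RR^n)}$, which is infinite precisely because $r<\tfrac{n}{n-1}$ --- the domain restriction (or the Schur-type kernel bound $\sup_x\int_\Omega K(x-y)^r\,dy\le C$ fed into the three-way H\"older argument) is essential, and with it the bound $\|I_1h\|_{L^s(\Omega)}\le C^{1/r}\|h\|_{L^q(\Omega)}$ follows exactly as claimed.
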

The main result regarding Riesz potentials that will be used in this paper is the following representation for writing any measurable function in divergence form.
\begin{lemma}\label{div_riesz}
	Let $\Om\subset \RR^n$ be a bounded domain with finite measure and $h\in L^s(\Omega)$. We extend $h$ to be zero outside $\Omega$, then there exists a vector field $\vec{h}\in L^s(\RR^n,\RR^n)$ such that $h=\dv \vec{h}$ satisfying the point-wise estimate 
	\begin{equation*}
		|\vec{h}(x)|\apprle_{(n)} I_1h(x).
	\end{equation*}
\end{lemma}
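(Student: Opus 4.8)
The plan is to construct the vector field $\vec h$ explicitly from the Riesz potential of $h$. Since $h \in L^s(\RR^n)$ is supported in $\Omega$, set $w := I_2 h = I_1 (I_1 h)$, i.e. solve $-\Delta w = c_n h$ in $\RR^n$ up to the usual normalization constant, via the Newtonian potential $w(x) = c_n \int_{\RR^n} |x-y|^{2-n} h(y)\, dy$ (with the appropriate logarithmic kernel modification when $n = 2$). Then take $\vec h := -\nabla w / c_n$, so that formally $\dv \vec h = -\Delta w / c_n = h$. The pointwise bound is then immediate: differentiating the kernel gives $|\nabla |x-y|^{2-n}| \apprle_n |x-y|^{1-n}$, whence
\begin{equation*}
|\vec h(x)| \apprle_n \int_{\RR^n} \frac{|h(y)|}{|x-y|^{n-1}}\, dy = I_1 |h|(x),
\end{equation*}
which, since we may as well replace $h$ by $|h|$ in the statement (or note $I_1 h$ is understood with $|h|$), is exactly the claimed estimate.

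First I would make precise the differentiation under the integral sign: because $h \in L^s(\Omega)$ with $s \ge q$ and $\Omega$ has finite measure, $h \in L^1(\Omega)$, and the kernel $|x-y|^{1-n}$ is locally integrable in $y$ uniformly for $x$ in compact sets, so the integral defining $\vec h$ converges absolutely for a.e. $x$ and defines an $L^1_{\loc}$ function; a standard dominated-convergence / difference-quotient argument justifies $\nabla w = -c_n\, \vec h$ in the sense of distributions. Next I would verify $\dv \vec h = h$ distributionally: for $\varphi \in C_c^\infty(\RR^n)$, $\int \vec h \cdot \nabla \varphi = -c_n^{-1}\int \nabla w \cdot \nabla \varphi = c_n^{-1} \int w\, \Delta \varphi$, and since $w$ is the Newtonian potential of $c_n h$ one has $\int w\, \Delta\varphi = -c_n \int h \varphi$ by the defining property of the fundamental solution (Fubini plus $\int |x-y|^{2-n}\Delta\varphi(x)\,dx = -c_n^{-1}\varphi(y)$), giving $\int \vec h \cdot \nabla\varphi = -\int h\varphi$, i.e. $h = \dv \vec h$. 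Finally, to get $\vec h \in L^s(\RR^n,\RR^n)$ I would invoke \cref{lem_1.34}: since $|\vec h| \apprle_n I_1|h|$ and $|h| \in L^s$ with — modulo the hypothesis $s < n$ being available in the regime used — the Riesz potential maps $L^q \to L^s$ for the relevant exponents; more directly, for the integrability claim one can also use that $I_1$ maps $L^s(\Omega) \to L^s(\Omega)$ on a finite-measure domain (a consequence of \cref{lem_1.34} applied with source exponent slightly below $s$, or Young's inequality since $I_1\big|_{\text{compact}} = |x|^{1-n}\chi \in L^1$), so $\vec h \in L^s$.

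The only genuinely delicate point is the dimensional bookkeeping with the fundamental solution (the constant $c_n$, and the $n=2$ case where $|x|^{2-n}$ is replaced by $-\log|x|$, which does not affect $\nabla w$ and hence not $\vec h$ at all), together with making the distributional identity $\dv \vec h = h$ rigorous rather than merely formal — everything else is a routine estimate on the gradient of the Newtonian kernel. I would present the $n \ge 3$ computation in detail and remark that $n = 2$ is identical at the level of $\vec h$ since only $\nabla w$ enters.
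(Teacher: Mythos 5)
Your construction is exactly the paper's: both take the Newtonian potential $w$ of $h$ (the paper phrases it via the Green's function $G$ of $-\Delta$ on $\RR^n$, which is the same kernel), set $\vec h = -\nabla w$ up to normalization, and obtain the pointwise bound from $|\nabla_x G(x,y)|\apprle_n |x-y|^{1-n}$. Your write-up is in fact more careful than the paper's (the distributional verification of $\dv\vec h=h$, the $n=2$ logarithmic kernel, and the $L^s$ membership via Young's inequality are all left implicit there), so the proposal is correct and follows the same route.
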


\begin{proof}
	First we extend $h$ to be zero outside $\Om$ and  consider the Green's function $G(x,y)$ associated to $-\Delta$ on the ball in $ \RR^n$.   Now  consider at the following problem:
	\begin{equation*}
		-\Delta \phi=h \qquad \text{ in }\RR^n.
	\end{equation*}
	From standard theory, we can then explicitly write the solution as $\phi=G*h=G*(-\Delta\phi)$ from which it is easy to see that the following holds:
	\begin{equation*}
		\Delta\phi(x)=\int_{\RR^n}\Delta_x G(x,y)h(y)\ dy=\int_{\RR^n}\Delta_x G(x,y)(-\Delta \phi(y))\ dy.
	\end{equation*}
	Therefore, we can write  $h(x)=-\dv_x(\na G*h)=:-\dv_x\vec{h}$, where $\na G*h$ is applied component wise. Since the green function $G(x,y)$ we consider was on $\RR^n$, we know that
		$|\na_x G(x,y)|\apprle_{(n)}\frac{1}{|x-y|^{n-1}}$ from which we obtain the desired pointwise estimate.
\end{proof}

\section{Some well known lemmas}
\label{section3}
In this section, let us recall some well known lemmas, the first of which is the well known Sobolev-Poincare inequality (for example, see \cite[Chapter I]{MR1230384} for the details).
\begin{lemma}
	\label{g_n}
	For some $\rho >0$, let $B_\rho(x_0)\subset \mathbb{R}^n$. For constants $\sig,q,r\in[1,\infty)$ and $\vartheta\in(0,1)$ such that $-\frac{n}{\sig}\leq \vartheta(1-\frac{n}{q})-(1-\vartheta)\frac{n}{r}$ is satisfied,  then for any $h\in W^{1,q}(B_\rho(x_0))$ there holds
	$$
	\fint_{B_\rho(x_0)}\frac{\left| h\right|^\sig}{\rho^\sig}\ dx\apprle_{(n,q)}\lbr\fint_{B_\rho(x_0)}\frac{\left| h\right|^q}{\rho^q}+|\nabla h|^q\ dx\rbr^\frac{\vartheta\sig}{q}\lbr\fint_{B_\rho(x_0)}\frac{\left| h\right|^r}{\rho^r}\ dx\rbr^\frac{(1-\vartheta)\sig}{r}.
	$$
\end{lemma}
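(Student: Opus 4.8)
\textbf{Proof plan for \cref{g_n}.}
The plan is to reduce the stated inequality to the standard Gagliardo--Nirenberg--Sobolev interpolation inequality on a ball by a scaling argument, and then handle the sub-borderline case $-\frac{n}{\sig} < \vartheta(1-\frac{n}{q}) - (1-\vartheta)\frac{n}{r}$ by a monotonicity/interpolation reduction to the borderline case. First I would rescale to the unit ball: setting $\tilde h(y) := h(x_0 + \rho y)$ for $y \in B_1(0)$, the normalized averages $\fint_{B_\rho(x_0)} \frac{|h|^\sigma}{\rho^\sigma}\,dx = \fint_{B_1(0)} |\tilde h|^\sigma\,dy$, and similarly for the $q$- and $r$-terms on the right (with $|\nabla_y \tilde h| = \rho\,|\nabla_x h|$ exactly matching the $\rho$-weights). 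So it suffices to prove, on $B_1(0)$,
\[
\|\tilde h\|_{L^\sigma(B_1)} \apprle_{(n,q)} \|\tilde h\|_{W^{1,q}(B_1)}^{\vartheta} \, \|\tilde h\|_{L^r(B_1)}^{1-\vartheta},
\]
which is precisely the Gagliardo--Nirenberg interpolation inequality on a bounded Lipschitz domain; the dimensional balance condition $-\frac{n}{\sigma} \le \vartheta(1-\frac{n}{q}) - (1-\vartheta)\frac{n}{r}$ is exactly the necessary condition for this inequality (with "$\le$" because one is allowed to interpolate \emph{below} the scaling-critical exponent on a set of finite measure, using H\"older to absorb the excess).

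For the borderline case (equality in the balance condition), I would invoke the classical Gagliardo--Nirenberg inequality directly: either cite \cite[Chapter I]{MR1230384} as the paper already does, or reproduce the two standard ingredients --- the full-space Gagliardo--Nirenberg inequality obtained from the Sobolev inequality $\|v\|_{L^{q^*}(\RR^n)} \apprle \|\nabla v\|_{L^q(\RR^n)}$ (when $q<n$) or Morrey/Ladyzhenskaya-type estimates otherwise, combined with H\"older's inequality to interpolate $L^\sigma$ between $L^{q^*}$ (or $L^\infty$) and $L^r$ --- together with a bounded linear extension operator $E: W^{1,q}(B_1) \to W^{1,q}(\RR^n)$ for the Lipschitz domain $B_1$, whose norm depends only on $n$ and $q$. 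Applying the full-space inequality to $Eh$ and restricting back to $B_1$ yields the claim with a constant depending only on $n,q$.

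For the strict-inequality case, I would note that if $-\frac{n}{\sigma} < \vartheta(1-\frac{n}{q}) - (1-\vartheta)\frac{n}{r}$, then one can choose $\sigma' \ge \sigma$ (possibly $\sigma' = q^*$ or $\sigma'=\infty$) realizing equality in the balance relation with the \emph{same} $\vartheta, q, r$; by the borderline case the inequality holds with $\sigma'$ in place of $\sigma$, and then since $B_1$ has finite measure, H\"older's inequality gives $\|\tilde h\|_{L^\sigma(B_1)} \le |B_1|^{1/\sigma - 1/\sigma'}\|\tilde h\|_{L^{\sigma'}(B_1)}$, which absorbs into the constant (now depending only on $n,q$). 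This completes the reduction.

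The only genuinely delicate point is bookkeeping of the exponent arithmetic: one must check that for every admissible triple $(\sigma,q,r)$ and $\vartheta$ satisfying the hypothesis there is indeed a valid choice of intermediate exponent $\sigma' \in (\sigma, q^*]$ (or $\sigma'=\infty$ in the non-Sobolev regimes $q \ge n$) making the scaling relation hold with equality --- i.e. that the hypothesis is not only necessary but, together with the finite-measure gain, sufficient. This is a routine but slightly tedious case analysis on whether $q < n$, $q = n$, or $q > n$, and I expect it to be the main (mild) obstacle; everything else is standard scaling plus H\"older.
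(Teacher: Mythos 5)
The paper does not prove \cref{g_n} at all: it is quoted as a well-known Sobolev--Poincar\'e/Gagliardo--Nirenberg interpolation inequality with a citation to \cite[Chapter I]{MR1230384}, so there is no in-paper argument to compare against. Your outline (rescale to $B_1$, prove the borderline case by extension plus the classical Gagliardo--Nirenberg inequality, then absorb the slack in the balance condition via H\"older on the finite-measure ball) is the standard proof and is essentially correct. The one place where your sketch is glibber than the facts is the case $q=n$: there the critical intermediate exponent $\sigma'=r/(1-\vartheta)$ is finite, but the device ``H\"older between $L^{q^*}$ and $L^r$'' degenerates because $q^*=\infty$ is not attained by $W^{1,n}$, so the borderline inequality there is a genuine Ladyzhenskaya/Nirenberg-type estimate (chain rule applied to powers of $h$ plus $W^{1,1}\hookrightarrow L^{n/(n-1)}$) rather than H\"older plus Sobolev embedding; similarly, when $q>n$ and $\vartheta$ exceeds the critical value one must also shift weight from $\|h\|_{L^r}$ to $\|h\|_{W^{1,q}}$ using $W^{1,q}(B_1)\hookrightarrow L^r(B_1)$, not merely enlarge $\sigma$. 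You flag exactly this case analysis as the delicate point, so the plan is sound.
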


The next result we need the following standard iteration lemma (for example, see \cite[Lemma 6.1]{MR1962933} or \cite{MR3273649} for the details):
\begin{lemma}
	\label{iter_lemma}
	Let $0< r< R<\infty$ be given and $h : [r,R] \to \RR$ be a non-negative and bounded function. Furthermore, let $\theta \in (0,1)$ and $A,B,\gamma_1,\gamma_2 \geq 0$ be fixed constants and 
	suppose that
	$$
	h(\rho_1) \leq \theta h(\rho_2) + \frac{A}{(\rho_2-\rho_1)^{\gamma_1}} + \frac{B}{(\rho_2-\rho_1)^{\gamma_2}},
	$$
	holds for all $r \leq \rho_1 < \rho_2 \leq R$, then the following conclusion holds:
	$$
	h(r) \apprle_{(\theta,\gamma_1,\gamma_2)} \frac{A}{(R-r)^{\gamma_1}} + \frac{B}{(R-r)^{\gamma_2}}.
	$$
\end{lemma}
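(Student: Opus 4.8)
The plan is to run the classical hole-filling iteration of Giaquinta--Giusti on a geometrically shrinking sequence of radii that increases to $R$ from inside. First I would set $\gamma := \max\{\gamma_1,\gamma_2\}$ and fix $\tau \in (0,1)$ close enough to $1$ that $\theta\tau^{-\gamma} < 1$; this is possible precisely because $\theta < 1$ (when $\gamma = 0$ any $\tau$ works), and the chosen $\tau$ depends only on $\theta,\gamma_1,\gamma_2$. Then define $\rho_0 := r$ and $\rho_{i+1} := \rho_i + (1-\tau)\tau^i(R-r)$, so that $\rho_i = r + (1-\tau^i)(R-r)$ is increasing, lies in $[r,R)$, converges to $R$, and satisfies $\rho_{i+1} - \rho_i = (1-\tau)\tau^i(R-r)$.

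The next step is to apply the hypothesis with $\rho_1 = \rho_i$ and $\rho_2 = \rho_{i+1}$, which gives
$$h(\rho_i) \le \theta\, h(\rho_{i+1}) + \frac{A}{\big((1-\tau)\tau^i(R-r)\big)^{\gamma_1}} + \frac{B}{\big((1-\tau)\tau^i(R-r)\big)^{\gamma_2}},$$
and then to iterate this from $i=0$ up to $i=k-1$, collecting the powers $\theta^i$:
$$h(r) \le \theta^k h(\rho_k) + \sum_{i=0}^{k-1} \theta^i \Big( \frac{A}{\big((1-\tau)\tau^i(R-r)\big)^{\gamma_1}} + \frac{B}{\big((1-\tau)\tau^i(R-r)\big)^{\gamma_2}} \Big).$$

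To conclude I would let $k\to\infty$. The boundedness of $h$ on $[r,R]$, together with $\theta<1$, forces $\theta^k h(\rho_k)\to 0$. For the remaining series I use $\gamma_j \le \gamma$ and $\tau^{-1}\ge 1$ to bound $\theta^i\tau^{-i\gamma_j} \le (\theta\tau^{-\gamma})^i$, so that $\sum_{i\ge0}\theta^i\tau^{-i\gamma_j} \le (1-\theta\tau^{-\gamma})^{-1}$, a finite constant depending only on $\theta,\gamma_1,\gamma_2$. Pulling the factors $(1-\tau)^{-\gamma_j}(R-r)^{-\gamma_j}$ out of the sum then yields
$$h(r) \le \frac{C}{(R-r)^{\gamma_1}}\, A + \frac{C}{(R-r)^{\gamma_2}}\, B$$
with $C = C(\theta,\gamma_1,\gamma_2)$, which is exactly the asserted estimate.

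There is no real obstacle here; the only point needing care is the choice of $\tau$, which must be made so that $\theta\tau^{-\gamma}<1$ — this is precisely what makes the geometric series summable — and one must check that this $\tau$, and hence the final constant, depends on nothing beyond $\theta,\gamma_1,\gamma_2$. The boundedness hypothesis on $h$ enters only to discard the tail $\theta^k h(\rho_k)$ and cannot be dropped.
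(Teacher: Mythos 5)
Your proof is correct and is precisely the standard iteration argument that the paper itself defers to by citing \cite[Lemma 6.1]{MR1962933}: a geometric sequence of radii $\rho_i = r+(1-\tau^i)(R-r)$ with $\tau$ chosen so that $\theta\tau^{-\max\{\gamma_1,\gamma_2\}}<1$, followed by iteration and summation of the resulting geometric series. The choice of $\tau$, the use of boundedness to kill the tail $\theta^k h(\rho_k)$, and the dependence of the final constant only on $\theta,\gamma_1,\gamma_2$ are all handled correctly.
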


Since we are studying structures with $p$-growth, we also need the following algebraic identities (see \cite[Chapter 8]{MR1962933} for the details).
\begin{lemma}\label{p_str}
	Let $p\in(1,\infty)$, then for any two vectors $A,B\in\RR^n$ (both non zero), the following holds:
	\begin{equation*}
	\left||A|^\frac{p-2}{2}A-|B|^\frac{p-2}{2}B\right|^2\apprle_{(n,p)} \lbr|A|^2+|B|^2\rbr^\frac{p-2}{2}|A-B|^2 \apprle_{(n,p)}\iprod{|A|^{p-2}A-|B|^{p-2}B}{A-B}.
	\end{equation*}
	Moreover, in the case when $p\ge2$, we have
	\begin{equation*}
	|A-B|^{p}\apprle_{(n,p)}\lbr|A|^2+|B|^2\rbr^\frac{p-2}{2}|A-B|^2,
	\end{equation*}
	and in the case when $p\le2$, there holds that
	\begin{equation*}
	|A-B|^p\apprle_{(n,p)}\left||A|^\frac{p-2}{2}A-|B|^\frac{p-2}{2}B\right|^2+\left||A|^\frac{p-2}{2}A-|B|^\frac{p-2}{2}B\right|^p|A|^\frac{(2-p)p}{2}.
	\end{equation*}
\end{lemma}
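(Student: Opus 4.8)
The plan is to adapt the intrinsic-scaling iteration of Kuusi--Mingione for the parabolic $p$-Laplacian (as in \cite{MR2746772,MR3273649}) and reduce the pointwise bound to a decay estimate for the excess $E_p(\na u,\cdot)$ along a chain of shrinking intrinsic cylinders centered at $z_0$. After translating so that $z_0=0$, set $A_0:=\lbr\fiint_{Q_{2\rho}(z_0)}|\na u|^p+1\ dz\rbr^{d/p}$, with $d$ from \cref{def_d}, and introduce a geometric sequence of radii $\rho_j=\sigma^j\rho$ (with $\sigma$ small and depending only on $n,p$) together with intrinsic cylinders $Q_j=Q_{\rho_j}^{\la_j}(0)$ whose time-scaling parameter $\la_j$ is coupled to the solution on $Q_j$ through $\la_j\approx\lbr\fiint_{Q_j}|\na u|^p\ dz\rbr^{1/p}+1$, this coupling being enforced only on the scales selected by the stopping-time procedure discussed below. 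The whole point of the intrinsic choice is that on $Q_j$ the rescaled equation behaves like the heat equation, so the homogeneous regularity theory becomes available.

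On each $Q_j$ I would compare $u$ with the solution $v_j$ of the corresponding homogeneous system on $Q_j$ sharing the lateral and initial data of $u$, and estimate the excess of $u$ by that of $v_j$ plus a localized contribution of the datum: the comparison estimates \cref{diff,diff_n} (covering the ranges $p\ge2$ and $p\le2$) bound $E_p(\na u,\tfrac12 Q_j)$ by $E_p(\na v_j,\tfrac12 Q_j)$ plus a quantity $\mathbf{c}_j$ controlled by the increment of $\bF$ over the annular range of radii $[\rho_{j+1},\rho_j]$ at $z_0$; the fractional power appearing in the subquadratic case of \cref{p_str} is exactly what produces the exponent $\max\{1,\tfrac1{p-1}\}$ on this term, via the subadditivity of $t\mapsto t^{\theta}$ for $\theta\ge1$ when summing the increments. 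Feeding in the interior Lipschitz and excess-decay estimates for the homogeneous $p$-parabolic system --- $\sup_{\tfrac12 Q_j}|\na v_j|\apprle\lbr\fiint_{Q_j}|\na v_j|^p\ dz\rbr^{1/p}+1$ and $E_p(\na v_j,Q_{j+1})\apprle\sigma^{\beta}E_p(\na v_j,Q_j)$ for a universal $\beta>0$ --- and choosing $\sigma$ so that $C\sigma^{\beta}\le\tfrac12$, I obtain the iteration
\[
E_p(\na u,Q_{j+1})\ \le\ \tfrac12\,E_p(\na u,Q_j)+C\,\mathbf{c}_j,\qquad \sum_{j}\mathbf{c}_j\ \apprle\ \bF^{\max\{1,\frac1{p-1}\}}(z_0).
\]

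Summing this inequality telescopically gives $\sup_j E_p(\na u,Q_j)\apprle A_0+\bF^{\max\{1,\frac1{p-1}\}}(z_0)$, hence the averages $\lbr\fiint_{Q_j}|\na u|^p\ dz\rbr^{1/p}$ remain bounded by the right-hand side of the theorem; since $z_0$ is a Lebesgue point of $\na u$ and the (controlledly regular) cylinders $Q_j$ shrink to $z_0$, letting $j\to\infty$ identifies $|\na u(z_0)|$ and yields the claim. The deficit $d$ enters only through the passage from the intrinsic parameter $\la_0$ at the starting scale to the non-intrinsic average $\fiint_{Q_{2\rho}(z_0)}$, since the intrinsic cylinder $Q_{\rho}^{\la}$ and the standard parabolic cylinder $Q_{\rho}$ differ in time length by a power of $\la$.

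The main obstacle is the familiar intrinsic-scaling circularity: $\la_j$ is defined through quantities on $Q_j$, which in turn depends on $\la_j$, and moreover $\la_j$ must be large enough for \cref{diff,diff_n} and the homogeneous regularity to apply on $Q_j$, yet small enough that $Q_{j+1}\subset Q_j$ and the nesting of the cylinders is preserved. Resolving this requires an exit-time (stopping-time) argument selecting the scales where the coupling $\la_j\approx(\fiint_{Q_j}|\na u|^p)^{1/p}+1$ genuinely holds; on the complementary scales one reads the bound directly off the definition of $\la_j$, and one must check that the two regimes glue to a single estimate. A secondary point is ensuring all constants depend on $n,p$ only --- automatic here since $\aa(x,t,\na u)=|\na u|^{p-2}\na u$ --- and carefully splitting the superquadratic and subquadratic cases so that the one exponent $\max\{1,\tfrac1{p-1}\}$ in the statement covers both.
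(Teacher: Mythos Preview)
Your proposal addresses the wrong statement. The lemma in question, \cref{p_str}, is a purely algebraic fact about two vectors $A,B\in\RR^n$: it asserts pointwise inequalities relating $\left||A|^{(p-2)/2}A-|B|^{(p-2)/2}B\right|^2$, $(|A|^2+|B|^2)^{(p-2)/2}|A-B|^2$, $\iprod{|A|^{p-2}A-|B|^{p-2}B}{A-B}$, and $|A-B|^p$. There is no PDE, no intrinsic cylinder, no datum $f$, no excess functional, and no Lebesgue point in this statement. The paper itself does not supply a proof; it simply refers to \cite[Chapter 8]{MR1962933}, where these inequalities are obtained by elementary vector calculus (for instance via the identity $|A|^{p-2}A-|B|^{p-2}B=\int_0^1\frac{d}{dt}\big(|tA+(1-t)B|^{p-2}(tA+(1-t)B)\big)\,dt$ and direct estimation of the integrand, or by a case split on $|A|\ge|B|$).

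What you have actually sketched is the proof of \cref{thm_3}, the pointwise gradient potential estimate; you even invoke \cref{p_str} as a tool inside your argument rather than as the object to be established. That sketch is broadly in the spirit of the paper's \cref{section7}, but it is entirely irrelevant to the algebraic inequalities of \cref{p_str}. A correct proof here is a few lines of computation with vectors and requires no intrinsic scaling, no comparison with a homogeneous solution, and no iteration.
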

The following lemma is an elementary extension theorem and can be found in \cite[Theorem 1.63]{MR1461542}.
\begin{lemma}\label{ext}
	Let $1\le p<\infty$ and $h\in W^{1,p}(B_r(x_0))$. Then there exists $\tilde{h}\in W^{1,p}(\RR^n)$ such that $\tilde{h}=h$ on $B_r(x_0)$, $\spt \tilde{h}\subset B_{\frac{3}{2}r}(x_0)$ and 
	\begin{equation*}
		\int_{B_{\frac{3}{2}r}(x_0)}|\na \tilde{h}|^p\ dx\apprle_{(n,p)}\int_{B_r(x_0)}|\na h|^p+r^{-p}|h|^p\ dx.
	\end{equation*}
\end{lemma}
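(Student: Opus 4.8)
The estimate is invariant under an affine change of the ball, so the plan is to reduce to the model ball $B_1(0)$, carry out the construction there, and transplant it back. After translating we may take $x_0=0$; setting $h_1(y):=h(ry)$ for $y\in B_1(0)$ we have $h_1\in W^{1,p}(B_1(0))$ with $\na h_1(y)=r\,\na h(ry)$. If we can produce $\tilde{h}_1\in W^{1,p}(\RR^n)$ with $\tilde{h}_1=h_1$ on $B_1(0)$, $\spt\tilde{h}_1\subset B_{3/2}(0)$ and $\int_{B_{3/2}(0)}|\na\tilde{h}_1|^p\,dx\apprle_{(n,p)}\int_{B_1(0)}|\na h_1|^p+|h_1|^p\,dx$, then $\tilde{h}(x):=\tilde{h}_1(x/r)$ has all the required properties: a change of variables turns the $r=1$ inequality into the stated one, and the factor $r^{-p}$ on the $|h|^p$-term appears simply because $\na$ carries a factor $r^{-1}$ while the zeroth-order term does not. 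Thus it suffices to treat $r=1$.

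For the model ball I would use reflection through the unit sphere. Let $\Phi(x):=x/|x|^2$ be the spherical inversion; it is a smooth involution of $\RR^n\setminus\{0\}$ that fixes $\partial B_1(0)$ pointwise and restricts to a bi-Lipschitz diffeomorphism of the closed annulus $\overline{A}$, where $A:=\{\,1<|x|<2\,\}$, onto $\{\,1/2\le|y|\le1\,\}$, with $\|D\Phi\|$ and $|\det D\Phi|$ bounded above and below by dimensional constants on $\overline{A}$. Define
\begin{equation*}
H(x):=\begin{cases} h(x), & |x|\le1,\\ h(\Phi(x)), & 1<|x|<2.\end{cases}
\end{equation*}
By the chain rule $\na(h\circ\Phi)(x)=(D\Phi(x))^{T}(\na h)(\Phi(x))$, so the uniform bound on $\|D\Phi\|$ together with the change-of-variables formula (whose Jacobian is controlled on $\overline{A}$) gives $h\circ\Phi\in W^{1,p}(A)$ with $\int_{A}|\na(h\circ\Phi)|^p+|h\circ\Phi|^p\,dx\apprle_{(n,p)}\int_{B_1(0)}|\na h|^p+|h|^p\,dx$. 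Since $\Phi$ is the identity on $\partial B_1(0)$, the traces of $h$ and of $h\circ\Phi$ agree there, so by the standard gluing lemma for Sobolev functions across a Lipschitz hypersurface $H\in W^{1,p}(B_2(0))$ with weak gradient equal to the piecewise one, whence $\int_{B_2(0)}|\na H|^p+|H|^p\,dx\apprle_{(n,p)}\int_{B_1(0)}|\na h|^p+|h|^p\,dx$.

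Finally I would localize: pick $\eta\in C_c^\infty(B_{3/2}(0))$ with $0\le\eta\le1$, $\eta\equiv1$ on $B_1(0)$ and $|\na\eta|\le C(n)$, and set $\tilde{h}_1:=\eta H$, extended by zero outside $B_{3/2}(0)$. Then $\tilde{h}_1\in W^{1,p}(\RR^n)$, $\spt\tilde{h}_1\subset B_{3/2}(0)$, $\tilde{h}_1=h_1$ on $B_1(0)$, and from $|\na\tilde{h}_1|\le|\na H|+|\na\eta|\,|H|$ and the bound on $H$ one obtains $\int_{B_{3/2}(0)}|\na\tilde{h}_1|^p\,dx\apprle_{(n,p)}\int_{B_1(0)}|\na h|^p+|h|^p\,dx$; undoing the rescaling gives the lemma. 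The construction is routine and there is no genuine obstacle; the only two points that deserve a line of justification are the change-of-variables estimate for $h\circ\Phi$ (which rests on the uniform bi-Lipschitz bounds for $\Phi$ on $\overline{A}$) and the weak differentiability of $H$ across $\partial B_1(0)$ (the trace-matching gluing lemma). Alternatively, one may simply invoke a bounded linear Sobolev extension operator for the ball, as in \cite[Theorem~1.63]{MR1461542}, and multiply by the cutoff $\eta$; the reflection argument is presented because it keeps the dependence of the constant on $n$ and $p$ transparent.
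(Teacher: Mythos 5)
Your construction is correct. Note, however, that the paper does not prove this lemma at all: it simply cites \cite[Theorem~1.63]{MR1461542} (Mal\'y--Ziemer), which is the standard bounded extension operator for the ball; the scaling factor $r^{-p}$ in front of $|h|^p$ is then just the usual dilation bookkeeping you carry out in your first paragraph. Your reflection argument is therefore a genuinely self-contained alternative, and all three of its steps check out: the reduction to $r=1$ produces exactly the $r^{-p}$ weight (since $\nabla$ picks up a factor $r^{-1}$ under $x\mapsto x/r$ while the zeroth-order term does not); the inversion $\Phi(x)=x/|x|^2$ is bi-Lipschitz on $\overline{\{1\le|x|\le2\}}$ with image in $B_1(0)\setminus B_{1/2}(0)$, so $h\circ\Phi\in W^{1,p}(A)$ with the claimed bound, and the trace-matching gluing across $\partial B_1(0)$ is legitimate because $\Phi$ fixes the sphere pointwise; and the cutoff $\eta$ converts the $W^{1,p}(B_2(0))$ bound on $H$ into the compactly supported extension, the term $|H|\,|\nabla\eta|$ being exactly what forces the zeroth-order term $|h|^p$ on the right-hand side. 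The only blemishes are cosmetic: in the model-ball step you write $h$ where you mean the rescaled $h_1$, and the trace-matching step deserves the one-line remark that traces compose correctly with bi-Lipschitz maps of the boundary. What your route buys over the citation is an explicit, dimension-and-$p$-transparent constant; what the citation buys is brevity, which is evidently what the authors opted for.
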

Let us now recall the well known Sobolev embedding result which can be found in \cite[Lemma 1.64]{MR1461542}.
\begin{lemma}
    \label{sob_poin}
    Let $B_r$ be a ball with radius $r$ and let $1 \leq p \leq n$ and $1\leq q \leq \frac{np}{n-p}$ with $1 \leq q < \infty$, then we have
    \[
        \lbr \fint_{B_r} \abs{\frac{h - \avgs{h}{B_r}}{r}}^{q} \ dx\rbr^{\frac{1}{q}} \leq C(n,p) \lbr \fint_{B_r} |\nabla h|^p \ dx \rbr^{\frac{1}{p}}.
    \]
    Here, we have taken $q \in (1,\infty)$ is any number in the case $p =n$.
\end{lemma}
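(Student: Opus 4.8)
\cref{sob_poin} is the classical Sobolev--Poincar\'e inequality on a ball, and the plan is to obtain it directly from the Riesz potential bound \cref{lem_1.34} together with the extension lemma \cref{ext}, both of which are already available. First I would remove the scaling by setting $h(x)=\tilde h\lbr\frac{x-x_0}{r}\rbr$: both sides of the asserted inequality pick up exactly one factor $r^{-1}$ (on the left from the explicit $1/r$, on the right from $\nabla h(x)=r^{-1}\nabla\tilde h\lbr\frac{x-x_0}{r}\rbr$) while the averages are scale invariant, so it suffices to prove the estimate on the unit ball $B:=B_1(0)$.

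The next step is the standard pointwise representation: for $h\in W^{1,p}(B)$, writing $h(x)-h(y)=\int_0^1\nabla h\lbr y+t(x-y)\rbr\cdot(x-y)\,dt$, averaging in $y$ over $B$ and changing variables yields
\[
|h(x)-\avgs{h}{B}|\apprle_{(n)}\int_B\frac{|\nabla h(y)|}{|x-y|^{n-1}}\,dy=I_1\lbr|\nabla h|\rchi_B\rbr(x),
\]
first for smooth $h$ and then by density. For $1\le p<n$ and $p\le q<p^\ast:=\tfrac{np}{n-p}$, one then applies \cref{lem_1.34} to $|\nabla h|\rchi_B$ with exponent pair $(p,q)$ to get $\lVert I_1(|\nabla h|\rchi_B)\rVert_{L^q(B)}\apprle\lVert\nabla h\rVert_{L^p(B)}$, which after renormalising the integrals is exactly the claim for this range; the remaining range $1\le q<p$ follows from H\"older's inequality on the bounded set $B$.

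Since \cref{lem_1.34} excludes the endpoint $s=q^\ast$, the case $q=p^\ast$ needs a separate argument: I would apply \cref{ext} to $h-\avgs{h}{B}$ (whose gradient is $\nabla h$) to produce $\tilde h\in W^{1,p}(\RR^n)$, supported in $B_{3/2}$, with $\int|\nabla\tilde h|^p\apprle\int_B|\nabla h|^p+\int_B|h-\avgs{h}{B}|^p$, where the last integral is absorbed into $\int_B|\nabla h|^p$ by the already proven case $q=p$; the Gagliardo--Nirenberg--Sobolev inequality $\lVert\tilde h\rVert_{L^{p^\ast}(\RR^n)}\apprle\lVert\nabla\tilde h\rVert_{L^p(\RR^n)}$ then closes this case. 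Finally, for $p=n$ and any finite $q\in(1,\infty)$, I would pick $\bar p<n$ with $\tfrac{n\bar p}{n-\bar p}\ge q$, apply the inequality just established with exponent $\bar p$, and conclude with two applications of Jensen's inequality (using $q\le\tfrac{n\bar p}{n-\bar p}$ on the left and $\bar p\le n$ on the right), the constant now depending on $n$ and $q$.

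The only mildly delicate point is the endpoint $q=p^\ast$, where \cref{lem_1.34} is not directly applicable and one must route through the extension lemma and the Euclidean Sobolev inequality; everything else is routine bookkeeping with the tools already assembled in \cref{section3}. As this is a textbook fact, one may equally well simply invoke \cite[Lemma 1.64]{MR1461542}, as indicated in the statement.
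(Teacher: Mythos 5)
Your proof is correct, but there is nothing in the paper to compare it against: \cref{sob_poin} is stated without proof and simply quoted from \cite[Lemma 1.64]{MR1461542}, so you have supplied an argument where the authors only supply a citation. Your route is the standard one (and essentially the one in the cited reference): rescale to the unit ball, use the subrepresentation formula $|h(x)-\avgs{h}{B}|\apprle_{(n)} I_1\lbr|\nabla h|\rchi_B\rbr(x)$, and invoke the Riesz potential bound \cref{lem_1.34} for $p\le q<p^\ast$, with the sub-$p$ range handled by Jensen and the case $p=n$ by reducing to some $\bar p<n$. The one point worth being careful about is the endpoint $q=p^\ast$, which you correctly identify as outside the scope of \cref{lem_1.34}; your detour through \cref{ext} and the Gagliardo--Nirenberg--Sobolev inequality is sound (the zero-order term produced by \cref{ext} is absorbed using the already-proved case $q=p$), though note that GNS is itself an external ingredient not listed among the paper's lemmas --- acceptable, since it is a genuinely more elementary statement for compactly supported functions and no circularity arises. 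Two cosmetic remarks: in the case $p=n$ the constant necessarily depends on $q$ as well, which you state correctly and which the lemma's notation $C(n,p)$ glosses over; and once you have the inequality for the pair $(\bar p,q)$ with $q\le\bar p^\ast$, a single application of Jensen on the right-hand side suffices.
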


We will use the following Morrey embedding and can be found in \cite[Theorem 1.62]{MR1461542}.
\begin{lemma}\label{Morrey}
	Let $h\in W_0^{1,p}(\Omega)$ with $p>n$. Then $h\in C^{0,1-\frac{n}{p}}(\overline{\Omega})$ and
	\begin{gather*}
	\sup_{x,y\in\overline{\Omega}}|h(x)-h(y)|\apprle_{n,p}|x-y|^{1-\frac{n}{p}}\txt{and}\sup_{\Om}|h|\apprle_{(n,p)}|\Omega|^\frac{1}{n}\lbr\fint_{\Omega}|\na h|^p\ dx\rbr^\frac{1}{p}.
	\end{gather*}
\end{lemma}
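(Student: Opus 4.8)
The plan is to derive both assertions from a single pointwise inequality that estimates the oscillation of $h$ over a ball by the order-one Riesz potential (\cref{def_rz}) of $|\na h|$; the point is that the hypothesis $p>n$ is exactly what makes the Riesz kernel lie in $L^{p'}$ of a ball, and the radius then enters to the power $1-\tfrac np$. \textbf{Step 1 (reduction and the basic representation):} since $h\in W_0^{1,p}(\Om)$, extend it by zero to all of $\RR^n$, obtaining $h\in W^{1,p}(\RR^n)$ with $\na h$ supported in $\overline\Om$ and $\lVert\na h\rVert_{L^p(\RR^n)}=\lVert\na h\rVert_{L^p(\Om)}$. By density of $C_c^\infty(\Om)$ it suffices to prove the two estimates for smooth $h$ and then pass to the limit; the uniform H\"older bound from Step 2 forces the approximating sequence to be uniformly Cauchy, so the limit has the asserted continuous representative, which vanishes on $\pa\Om$. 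For smooth $h$, a ball $B_\rho(x)$, and $\xi\in B_\rho(x)$, write $\omega=\tfrac{\xi-x}{|\xi-x|}$ and $h(x)-h(\xi)=-\int_0^{|\xi-x|}\na h(x+s\omega)\cdot\omega\,ds$; integrating in $\xi$ over $B_\rho(x)$, passing to polar coordinates and using Fubini gives the classical bound
\[
\int_{B_\rho(x)}|h(x)-h(\xi)|\,d\xi\apprle_{(n)}\rho^{n}\int_{B_\rho(x)}\frac{|\na h(\xi)|}{|x-\xi|^{n-1}}\,d\xi=\rho^{n}\,I_1\lbr|\na h|\chi_{B_\rho(x)}\rbr(x).
\]

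\textbf{Step 2 (H\"older estimate):} fix $x,y\in\overline\Om$, set $r=|x-y|$ and $W:=B_r(x)\cap B_r(y)$, which contains the ball of radius $r/2$ about the midpoint of $x$ and $y$, so $|W|\ge c(n)\,r^{n}$. Since $W\subset B_r(x)$ and $W\subset B_r(y)$,
\[
|h(x)-h(y)|\le\Big|h(x)-\fint_W h\Big|+\Big|h(y)-\fint_W h\Big|\le\frac{1}{|W|}\int_{B_r(x)}|h(x)-h(\xi)|\,d\xi+\frac{1}{|W|}\int_{B_r(y)}|h(y)-h(\xi)|\,d\xi,
\]
which by Step 1 and $|W|\ge c(n)r^n$ is $\apprle_{(n)}I_1(|\na h|\chi_{B_r(x)})(x)+I_1(|\na h|\chi_{B_r(y)})(y)$. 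Applying H\"older's inequality with exponents $p$ and $p'=\tfrac{p}{p-1}$, and using that $p>n$ is precisely the condition $(n-1)p'<n$ under which $\int_{B_r}|z-\xi|^{(1-n)p'}\,d\xi=C(n,p)\,r^{\,n-(n-1)p'}<\infty$ with $\tfrac{n-(n-1)p'}{p'}=1-\tfrac np$, we get $I_1(|\na h|\chi_{B_r(x)})(x)\apprle_{(n,p)}r^{1-\frac np}\lVert\na h\rVert_{L^p(\Om)}$ and the same bound at $y$. Hence $|h(x)-h(y)|\apprle_{(n,p)}|x-y|^{1-\frac np}\lVert\na h\rVert_{L^p(\Om)}$, which is the asserted H\"older estimate.

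\textbf{Step 3 (sup estimate) and main obstacle:} put $R:=(|\Om|/|B_1|)^{1/n}$ so that $|B_R|=|\Om|$, and for $x\in\overline\Om$ take $\rho:=2R$; then $E:=B_\rho(x)\setminus\Om$ satisfies $|E|\ge|B_\rho|-|\Om|=(2^n-1)|\Om|\ge c(n)\,\rho^n$, and $h=0$ a.e.\ on $E$. Therefore
\[
|h(x)|=\Big|\fint_E\lbr h(x)-h(\xi)\rbr d\xi\Big|\le\frac{1}{|E|}\int_{B_\rho(x)}|h(x)-h(\xi)|\,d\xi\apprle_{(n)}I_1\lbr|\na h|\chi_{B_\rho(x)}\rbr(x)\apprle_{(n,p)}\rho^{1-\frac np}\lVert\na h\rVert_{L^p(\Om)},
\]
using Step 1, $|E|\ge c(n)\rho^n$, and the H\"older computation of Step 2. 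Since $\rho^{1-\frac np}=C(n,p)|\Om|^{\frac1n-\frac1p}$ and $\lVert\na h\rVert_{L^p(\Om)}=|\Om|^{1/p}\lbr\fint_\Om|\na h|^p\,dx\rbr^{1/p}$, this reads $\sup_{\overline\Om}|h|\apprle_{(n,p)}|\Om|^{1/n}\lbr\fint_\Om|\na h|^p\,dx\rbr^{1/p}$ (on $\overline\Om\setminus\Om$ the extension vanishes, so the supremum is unchanged). The argument is entirely classical; the only delicate points are tracking the dimensional constants in the polar-coordinate estimate of Step 1 and verifying that $p>n$ is exactly what makes $(1-n)p'>-n$, so that the Riesz kernel lies in $L^{p'}$ of a ball with the radius appearing to the power $1-\tfrac np$. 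The density reduction and the vanishing of $h$ on $\pa\Om$ are routine once the uniform H\"older bound is in hand.
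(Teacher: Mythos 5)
Your argument is correct. Note, though, that the paper does not prove \cref{Morrey} at all: it is quoted verbatim from \cite[Theorem 1.62]{MR1461542}, so there is nothing internal to compare against. What you have written is the standard self-contained Morrey proof: the pointwise oscillation bound by the Riesz potential $I_1|\na h|$ over a ball, H\"older's inequality exploiting that $p>n$ is exactly the condition $(n-1)p'<n$ making the kernel locally $L^{p'}$ with the radius entering to the power $1-\tfrac np$, and the sup bound obtained by averaging against the set $B_{2R}(x)\setminus\Om$ where the zero extension vanishes, whose measure is comparable to $|\Om|$ by the choice $|B_R|=|\Om|$. All the exponent bookkeeping checks out ($\rho^{1-n/p}\lVert\na h\rVert_{L^p(\Om)}=C|\Om|^{1/n}(\fint_\Om|\na h|^p)^{1/p}$), and the density/extension reduction is handled properly. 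One cosmetic remark: as printed in the paper the first inequality of the lemma omits the factor $\lVert\na h\rVert_{L^p(\Om)}$ on the right (a typo, since the left side is a supremum over $x,y$ while the right depends on $x,y$); your proof supplies the correct homogeneous form.
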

In the borderline case $p=n$, we have the following result, see \cite[Theorem 1.66]{MR1461542} for the details.
\begin{theorem}\label{J_N}
	Let $h\in W^{1,1}(\Omega)$ where $\Omega\subset \RR^n$ is convex. Suppose there is a constant $M$ such that 
	\begin{equation*}
	\int_{\Omega\cap B_r(x_0)}|\na h|\ dx\le Mr^{n-1},
	\end{equation*}
	for all $B_r(x_0)\subset \RR^n$,  then there exist positive constants $\sig_0=\sig_0(n)$ and $C=C(n)$ such that 
	\begin{equation*}
	\int_{\Omega}\exp\lbr\frac{\sig}{M}|h-(h)_\Omega|\rbr\ dx\le C(\diam\Omega)^n,
	\end{equation*}
	where $\sig< \frac{\sig_0|\Omega|}{(\diam\Omega)^{n}}$.
\end{theorem}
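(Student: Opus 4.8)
\medskip
\noindent\emph{Proof proposal.}
I would treat this as the classical Trudinger-type exponential integrability estimate, i.e.\ a John--Nirenberg statement for a function whose gradient lies in the Morrey space $L^{1,n-1}$. The first move is to extend $g:=\chi_\Omega|\na h|$ by zero to $\RR^n$, so that the hypothesis becomes $\int_{B_r(x_0)}g\,dx\le Mr^{n-1}$ for \emph{every} ball $B_r(x_0)\subset\RR^n$ and $g$ is supported in the bounded set $\Omega$. Using convexity of $\Omega$ I would record the pointwise Poincar\'e--potential inequality: writing $h(x)-h(y)=-\int_0^{|x-y|}\partial_\rho h\big(x+\rho\tfrac{y-x}{|y-x|}\big)\,d\rho$, averaging over $y\in\Omega$, passing to polar coordinates about $x$ and swapping the order of integration gives
\[
|h(x)-(h)_\Omega|\ \le\ \frac{(\diam\Omega)^{n}}{n|\Omega|}\,(I_1 g)(x)\qquad\text{for a.e. }x\in\Omega.
\]
Writing $d:=\diam\Omega$ and $\kappa:=|\Omega|/d^{n}$, it then suffices to prove $\int_\Omega \exp\!\big(\tfrac{c}{\kappa M} I_1 g\big)\,dx\le C(n)d^{n}$ for a dimensional $c>0$ and set $\sigma:=nc\,\kappa$, which has exactly the claimed form $\sigma<\sigma_0|\Omega|/(\diam\Omega)^{n}$.

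The first substantial step is to show that $I_1 g\in\mathrm{BMO}(\RR^n)$ with $\|I_1 g\|_\ast\apprle_n M$ (finiteness a.e.\ being immediate since $g$ is supported in a bounded set). For a ball $B=B_\rho(z)$ I would split $g=g\chi_{2B}+g\chi_{(2B)^c}$. For the near part, Fubini together with the Morrey bound on $B_{2\rho}(z)$ gives $\int_B I_1(g\chi_{2B})\,dx=\int_{2B}g(y)\big(\int_B|x-y|^{1-n}\,dx\big)dy\apprle_n \rho\int_{2B}g\apprle_n M\rho^{n}$, hence $\fint_B I_1(g\chi_{2B})\apprle_n M$. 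For the far part, the mean value inequality gives $\big|\,|x-y|^{1-n}-|x'-y|^{1-n}\big|\apprle_n \rho|z-y|^{-n}$ for $x,x'\in B$ and $y\notin 2B$, so that $|I_1(g\chi_{(2B)^c})(x)-I_1(g\chi_{(2B)^c})(x')|\apprle_n \rho\int_{(2B)^c}\tfrac{g(y)}{|z-y|^{n}}\,dy$, and a dyadic decomposition of $(2B)^c$ into annuli $\{2^k\rho\le|z-y|<2^{k+1}\rho\}$ with the Morrey bound on each $B_{2^{k+1}\rho}(z)$ yields $\int_{(2B)^c}\tfrac{g(y)}{|z-y|^{n}}\,dy\apprle_n M/\rho$. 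Writing $I_1 g-(I_1 g)_B$ as the sum of its near and far pieces and combining gives $\fint_B|I_1 g-(I_1 g)_B|\apprle_n M$, uniformly in $B$.

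Next I would control the average: fix $x_\ast\in\Omega$ and set $Q:=B_d(x_\ast)\supset\Omega$; Fubini as above gives $(I_1 g)_Q\le\tfrac1{|Q|}\int_\Omega g(y)\big(\int_Q|x-y|^{1-n}\,dx\big)dy\apprle_n M$. Using $|h-(h)_\Omega|\le\tfrac1{n\kappa}I_1 g$ and the splitting $I_1 g=\big(I_1 g-(I_1 g)_Q\big)+(I_1 g)_Q$,
\[
\int_\Omega \exp\!\Big(\tfrac{\sigma}{M}|h-(h)_\Omega|\Big)dx\ \le\ \exp\!\Big(\tfrac{\sigma(I_1 g)_Q}{n\kappa M}\Big)\int_Q \exp\!\Big(\tfrac{\sigma}{n\kappa M}\big(I_1 g-(I_1 g)_Q\big)\Big)dx.
\]
For $\sigma<\sigma_0\kappa$ with $\sigma_0=\sigma_0(n)$ small the prefactor is $\apprle_n 1$ (as $(I_1 g)_Q\apprle_n M$), and the John--Nirenberg inequality on $Q$, applied to $I_1 g\in\mathrm{BMO}$ with seminorm $\apprle_n M$, bounds the remaining integral by $C(n)|Q|=C(n)\omega_n d^{n}$ precisely when $\tfrac{\sigma}{n\kappa M}\|I_1 g\|_\ast$ stays below the dimensional John--Nirenberg threshold, i.e.\ again when $\sigma<\sigma_0(n)\kappa$. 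Combining the two displays gives the assertion with $C=C(n)$ and $\sigma_0=\sigma_0(n)$.

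The step carrying the real weight is the Morrey-to-$\mathrm{BMO}$ passage, and inside it the far-part oscillation estimate: the individual dyadic contributions of the Morrey bound do not decay, and are tamed only by the $|z-y|^{-n}$ gain in the kernel produced by the mean value inequality. I would also be careful that $\kappa=|\Omega|/(\diam\Omega)^{n}$ enters \emph{only} through the scaling prefactor $\tfrac1{n\kappa}$ in the Poincar\'e--potential bound and through the bounded average $(I_1 g)_Q$, so that the admissible $\sigma$ comes out linear in $\kappa$, matching the stated $\sigma<\sigma_0|\Omega|/(\diam\Omega)^{n}$; routing $\kappa$ through a $\mathrm{BMO}$-on-$\Omega$ seminorm instead would produce a spurious $\kappa^{2}$. (Equivalently, one can bypass John--Nirenberg and run the usual good-$\lambda$/power-series argument on $I_1 g$ directly from the $\mathrm{BMO}$ bound, as in \cite[Theorem~1.66]{MR1461542}.)
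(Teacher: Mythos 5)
Your argument is correct. Note first that the paper does not prove this statement at all: it is quoted verbatim from \cite[Theorem 1.66]{MR1461542}, so there is no in-paper proof to match against. Judged on its own, your proof is a complete and standard route to the result: the convexity of $\Omega$ gives the pointwise representation $|h(x)-(h)_\Omega|\le \frac{(\diam\Omega)^n}{n|\Omega|}I_1(\chi_\Omega|\na h|)(x)$, the Morrey hypothesis $\int_{B_r}g\le Mr^{n-1}$ passes to a $\mathrm{BMO}$ bound $\|I_1g\|_*\apprle_n M$ via the usual near/far splitting (the far-part oscillation estimate with the $|z-y|^{-n}$ kernel gain and the geometric dyadic sum are both carried out correctly), the average $(I_1g)_Q\apprle_n M$ over a ball $Q\supset\Omega$ of radius $\diam\Omega$ is controlled by Fubini, and John--Nirenberg then delivers the exponential bound with the threshold $\sigma<\sigma_0(n)\,|\Omega|/(\diam\Omega)^n$ coming out linear in $|\Omega|/(\diam\Omega)^n$ exactly as the statement requires. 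Your closing remark is also accurate: the textbook proofs (Mal\'y--Ziemer, or Gilbarg--Trudinger for the Trudinger inequality) typically bypass $\mathrm{BMO}$ and John--Nirenberg and instead estimate $\int_\Omega\exp(c\,I_1g/M)$ directly by a power-series/Hedberg-type argument on the Riesz potential of a Morrey function; your version trades that computation for the Morrey-to-$\mathrm{BMO}$ lemma plus John--Nirenberg, which is slightly heavier machinery but arguably more transparent about where the smallness condition on $\sigma$ comes from. No gaps.
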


We define the  truncated function in space variable as follows:
\begin{definition}
	Let  $\eta\in C_c^\infty(B_r(x_0))$ be any cut-off function, then we denote
	\begin{equation*}
		\avgsueta[r]:=\frac{\int_{B_{r}}\eta^p(x)u(x,t)\ dx}{\int_{B_r}\eta^p(x)\ dx}.
	\end{equation*}
Note that we will keep track of the domain of the definition of the cut-off function $\eta$. 
\end{definition}
The proof of the following lemma can be found in \cite[Lemma 5.3]{MR2342615}.
\begin{lemma}\label{weight_lem}
	Let $B_{r}(x_0)\subset\Omega$ and  $u(\cdot,t)\in L^p_{\loc}(\Omega)$ for some  $p>1$ be given. Furthermore, let $\eta\in C_c^\infty(B_r(x_0))$ be a cut-off function such that the following is satisfied for some $\overline{c}>0$:
	\begin{equation*}
		\sup_{x\in B_r(x_0)}\eta(x)\le \overline{c}\fint_{B_r(x_0)}\eta(x)\ dx,
	\end{equation*}
	then the following conclusion holds:
	\begin{equation*}
		\int_{B_r}|u-\avgsub|^p\ dx\apprle_{(n,p,\overline{c})}\int_{B_r}|u-\avgsueta[r]|^p\ dx\apprle_{(n,p,\overline{c})}\int_{B_r}|u-\avgsub|^p\ dx.
	\end{equation*}
\end{lemma}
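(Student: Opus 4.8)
The plan is to prove the two inequalities separately, both by triangle-inequality arguments in $L^p(B_r)$ combined with the fact that $\avgsub$ and $\avgsueta[r]$ are the respective best constants for approximating $u$ in $L^p$ by constants, up to a controlled weight. Throughout write $B_r = B_r(x_0)$, and set $c_0 := \avgsub$, $c_\eta := \avgsueta[r]$.

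For the left inequality, recall that $c_0 = \fint_{B_r} u\, dx$ minimizes $\lambda \mapsto \int_{B_r}|u-\lambda|^p\,dx$ over $\lambda \in \RR$, hence in particular
\begin{equation*}
\int_{B_r}|u-c_0|^p\,dx \le \int_{B_r}|u-c_\eta|^p\,dx,
\end{equation*}
which is the first claimed bound with constant $1$ (so certainly $\apprle_{(n,p,\overline c)}$). For the right inequality, I would first bound the weighted mean-value constant against $c_0$: by Jensen applied to the probability measure $\eta^p\,dx / \int_{B_r}\eta^p\,dx$ on $B_r$, one gets
\begin{equation*}
|c_\eta - c_0|^p = \left| \frac{\int_{B_r}\eta^p (u - c_0)\,dx}{\int_{B_r}\eta^p\,dx}\right|^p \le \frac{\int_{B_r}\eta^p |u-c_0|^p\,dx}{\int_{B_r}\eta^p\,dx} \le \frac{\sup_{B_r}\eta^p}{\int_{B_r}\eta^p\,dx}\int_{B_r}|u-c_0|^p\,dx.
\end{equation*}
Now invoke the hypothesis $\sup_{B_r}\eta \le \overline c\,\fint_{B_r}\eta\,dx = \overline c\,|B_r|^{-1}\int_{B_r}\eta\,dx$. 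Since $0 \le \eta \le \sup_{B_r}\eta$ we have $\int_{B_r}\eta^p\,dx \ge (\sup_{B_r}\eta)^{p-1}\cdot (\sup_{B_r}\eta)^{-(p-1)}\int_{B_r}\eta^p\,dx$, which is unhelpful directly; instead estimate $\int_{B_r}\eta\,dx \le (\sup_{B_r}\eta)^{1-1/p}\bigl(\int_{B_r}\eta^p\,dx\bigr)^{1/p}|B_r|^{1-1/p}$ by H\"older, so that $\sup_{B_r}\eta \le \overline c\,|B_r|^{-1/p}(\sup_{B_r}\eta)^{1-1/p}\bigl(\int_{B_r}\eta^p\,dx\bigr)^{1/p}$, i.e. $(\sup_{B_r}\eta)^{1/p} \le \overline c\,|B_r|^{-1/p}\bigl(\int_{B_r}\eta^p\,dx\bigr)^{1/p}$, hence $(\sup_{B_r}\eta)^p \le \overline c^{\,p}\,|B_r|^{-1}\int_{B_r}\eta^p\,dx$. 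Therefore $\dfrac{\sup_{B_r}\eta^p}{\int_{B_r}\eta^p\,dx} \le \overline c^{\,p}\,|B_r|^{-1}$, and the Jensen bound above gives $|c_\eta - c_0|^p \le \overline c^{\,p}\fint_{B_r}|u-c_0|^p\,dx$.

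Finally, combine via the triangle inequality in $L^p(B_r)$: using $|a+b|^p \le 2^{p-1}(|a|^p+|b|^p)$,
\begin{equation*}
\int_{B_r}|u-c_\eta|^p\,dx \le 2^{p-1}\int_{B_r}|u-c_0|^p\,dx + 2^{p-1}|B_r|\,|c_0-c_\eta|^p \le 2^{p-1}(1+\overline c^{\,p})\int_{B_r}|u-c_0|^p\,dx,
\end{equation*}
which is exactly the right inequality with constant depending only on $p$ and $\overline c$ (the dependence on $n$ entering only through the comparison of balls if one prefers averages to integrals). Assembling the two displays yields the chain of inequalities in the statement.

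The only mildly delicate point — and the step I would be most careful with — is the passage from the pointwise control $\sup_{B_r}\eta \le \overline c\,\fint_{B_r}\eta$ to the ratio bound $\sup_{B_r}\eta^p \le \overline c^{\,p}|B_r|^{-1}\int_{B_r}\eta^p\,dx$; the H\"older interpolation above does it cleanly, but one should note it is essential that $\eta \ge 0$ so that $\int\eta^p$ controls $\int\eta$ from above via $\sup\eta$. Everything else is the standard observation that arithmetic and $\eta^p$-weighted means of $u$ over $B_r$ are comparable as $L^p$-approximants, with comparison constants governed by $\overline c$. No compactness, no PDE input, and no use of $u$ being a solution is needed — only $u(\cdot,t) \in L^p_{\loc}$, which is exactly what is assumed.
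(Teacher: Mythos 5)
Your overall architecture --- control $|c_\eta-c_0|$ by Jensen against the weight $\eta^p\,dx/\int\eta^p\,dx$ plus the hypothesis on $\eta$, then transfer between the two $L^p$ distances by the triangle inequality --- is the standard one; the paper itself does not prove this lemma but cites \cite[Lemma 5.3]{MR2342615}, where essentially this argument appears. However, two of your individual justifications are false as written, even though both conclusions survive with one-line repairs.

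First, the mean $\fint_{B_r}u\,dx$ does \emph{not} minimize $\lambda\mapsto\int_{B_r}|u-\lambda|^p\,dx$ unless $p=2$ (for $p=1$ a median does, etc.), so the left inequality does not hold with constant $1$. What is true, and is exactly what the paper uses in the proof of \cref{cor_sup_2_bnd}, is the quasi-minimality
\begin{equation*}
\int_{B_r}\bigl|u-\avgs{u}{B_r}\bigr|^p\,dx\;\le\;2^{p}\,\inf_{\lambda\in\RR}\int_{B_r}|u-\lambda|^p\,dx,
\end{equation*}
which follows from the $L^p$ triangle inequality together with $\bigl|\lambda-\avgs{u}{B_r}\bigr|\le\bigl(\fint_{B_r}|u-\lambda|^p\,dx\bigr)^{1/p}$; taking $\lambda=c_\eta$ gives the left inequality with constant $2^p$ rather than $1$.

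Second, the H\"older inequality you invoke, $\int_{B_r}\eta\,dx\le(\sup\eta)^{1-1/p}\bigl(\int_{B_r}\eta^p\,dx\bigr)^{1/p}|B_r|^{1-1/p}$, is false in general: testing it on $\eta\equiv c$ with $0<c<1$ reduces it to $1\le c^{1-1/p}$, which fails for $p>1$. (The next line also loses a power: from $(\sup\eta)^{1/p}\le\overline c\,|B_r|^{-1/p}(\int\eta^p)^{1/p}$ one gets $\sup\eta\le\overline c^{\,p}|B_r|^{-1}\int\eta^p$, not $(\sup\eta)^p\le\cdots$.) The ratio bound you actually need, $(\sup_{B_r}\eta)^p\le\overline c^{\,p}|B_r|^{-1}\int_{B_r}\eta^p\,dx$, is nevertheless correct and follows directly from Jensen applied to $\eta$ itself: $(\fint_{B_r}\eta\,dx)^p\le\fint_{B_r}\eta^p\,dx$, hence $(\sup\eta)^p\le\overline c^{\,p}(\fint\eta)^p\le\overline c^{\,p}\fint\eta^p$. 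With that substitution, your estimate $|c_\eta-c_0|^p\le\overline c^{\,p}\fint_{B_r}|u-c_0|^p\,dx$ and the concluding triangle-inequality step are correct, and the lemma follows.
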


\subsection{Notation}
We shall clarify all the notation that will be used in this paper.

\begin{description}
	\descitem{N1}{not0}  We shall fix a point $z_0 = (x_0,t_0) \in \Om \times (0,T)=\Om_T$. 
	\descitem{N2}{not1} We shall use $\nabla$ to denote derivatives with respect to the space variable $x$.
	\descitem{N3}{not2} We shall sometimes alternate between using $\dot{h}$, $\pa_t h$ and $h'$ to denote the time derivative of a function $h$.
	\descitem{N4}{not3} We shall use $D$ to denote the derivative with respect to both the space variable $x$ and time variable $t$ in $\RR^{n+1}$.
	\descitem{N5}{not3-1} In what follows, we shall always assume the following bound holds:
	\begin{equation*}
	\label{bound_pp}
	\frac{2n}{n+2} < p < \infty.
	\end{equation*}
	
	\descitem{N6}{not4}  Let $z_0 = (x_0,t_0) \in \RR^{n+1}$ be a point, $\rho, s >0$ be two given parameters and let $\la \in [1,\infty)$. We shall use the following notations:
	\begin{equation*}\label{notation_space_time}
	\def\arraystretch{1.5}
	\begin{array}{lll}
	& I_s(t_0) := (t_0 - s^2, t_0+s^2) \subset \RR,
	& \quad Q_{\rho,s}(z_0) := B_{\rho}(x_0) \times I_{s}(t_0) \subset \RR^{n+1},\\ 
	& I_s^{\la}(t_0) := (t_0 - \la^{2-p}s^2, t_0+\la^{2-p}s^2) \subset \RR,
	&\quad Q^{\la}_{\rho,s}(z_0) := B_{\rho}(x_0) \times I^{\la}_{s}(t_0) \subset \RR^{n+1},\\
	&  Q_{\rho}^{\la} (z_0) := Q_{\rho, \rho}^{\la} (z_0). 
	\end{array}
	\end{equation*}
	
	\descitem{N7}{not5} We shall use $\int$ to denote the integral with respect to either space variable or time variable and use $\iint$ to denote the integral with respect to both space and time variables simultaneously. 
	
	Analogously, we will use $\fint$ and $\fiint$ to denote the average integrals as defined below: for any set $A \times B \subset \RR^n \times \RR$, we define
	\begin{gather*}
	\avgs{h}{A}:= \fint_A h(x) \ dx = \frac{1}{|A|} \int_A h(x) \ dx,\\
	\avgs{h}{A\times B}:=\fiint_{A\times B} h(x,t) \ dx \ dt = \frac{1}{|A\times B|} \iint_{A\times B} h(x,t) \ dx \ dt.
	\end{gather*}
	
	\descitem{N8}{not6} Given any positive function $\mu$, we shall denote $\avgs{h}{\mu} := \int h\frac{\mu}{\|\mu\|_{L^1}}dm$ where the domain of integration is the domain of definition of $\mu$ and $dm$ denotes the associated measure. 
	
	Moreover, we shall denote $\avgs{h}{\mu}^r := \int_{B_r} h\frac{\mu}{\|\mu\|_{L^1}}dm$,  where the superscript $r$ is used to keep track of the domain of integration. 
	
	\descitem{N9}{not11} We will use the notation $\apprle_{(a,b,\ldots)}$ to denote an inequality with a constant depending on $a,b,\ldots$.
	\descitem{N10}{notparbnd} For a given space-time cylinder $Q = B_R \times (a,b)$, we denote the parabolic boundary of $Q$ to be the union of the bottom and the lateral boundaries, i.e., $\pa_pQ = B_r \times \{t=a\} \bigcup \pa B_R \times (a,b)$.
	
	\descitem{N11}{not12} The exponent $\mfa$ denotes a fixed value that will be given explicitly at the beginning of each section. 
	
	\descitem{N12}{not13} We will denote $p' = \frac{p}{p-1}$ to be the conjugate exponent, $p^*= \frac{np}{n-p}$ to be the elliptic Sobolev exponent and $p^{\#} = \frac{p(n+2)}{n}$ to be the prabolic Sobolev exponent. 
\end{description}

\section{Proof of \texorpdfstring{\cref{thm_1}}. - Borderline higher integrability.}
\label{section5}
The proof proceeds in several steps, essentially following the strategy developed in \cite{MR1749438}. Before we begin the proof, let us fix some constants that will be needed later. 
\begin{definition}\label{def_delta}
Let us define the Sobolev exponent as follows:
	\begin{equation}\label{mfa_high}
	\mfa:=
	\begin{cases}
	(\de p)^*:=\frac{n \de p}{n- \de p}&\text{ if } p \leq  n,\\
	\infty&\text{ if } p> n.
	\end{cases}
	\end{equation}
\end{definition}
We also need the following choice of an exponent that will be needed when proving a reverse H\"older type inequality:
\begin{definition}\label{reverse_exp}
	For some $\de \in (0,1)$, we take
	\begin{equation*}
	\label{def_q}
		q:=\max\left\{\de p, \frac{np}{n+2},\frac{2n}{n+2} \right\},
	\end{equation*}
satisfying the following restrictions:
\begin{equation}
    \label{def_de}
    \de \geq \frac{n}{n+2}\max\left\{ 1, \frac{2}{p} \right\}, \qquad \de > \frac{1}{p} \ \ \text{in the case} \ p\leq n \txt{and} \de > \frac{n}{p} \ \ \text{in the case} \ p>n.
\end{equation}
Note that since $p > \frac{2n}{n+2}$, the above restrictions imply we can take $\de <1$.
	\end{definition}

With these choices, we shall now define the required intrinsic setting:
\begin{definition}
	Let $Q_{8\rho}^\la(z_0)\subset\Om_T$ for some $\la\ge1$. Furthermore, let us assume the following intrinsic bounds are satisfied throughout this section:
	    \begin{gather}
	    	\fiint_{\Qrho}\lbr|\na u|+1\rbr^p\ dz+\la^{\nonfe}\fint_{B_{\rho}(x_0)}\rho^{\mfa'}|f|^{\mfa'}\ dz\ge \la^p.\label{ge_la},\\
	\fiint_{\Qfrho}\lbr|\na u|+1\rbr^p \ dz+\la^{\nonfe}\fint_{B_{4\rho}(x_0)}(4\rho)^{\mfa'}|f|^{\mfa'}\ dz\le\la^p, \label{le_la}.
	\end{gather}
	
\end{definition}

\subsection{Caccioppoli type estimate}
Suppose that $B_{4\rho}(x_0)\subset\Om$ and $\rho\le \rhoa<\rhob\le4\rho$. Consider the following cut-off function:
\begin{equation}
\label{cut_off_function1}
\begin{array}{cccc}
\eta=\eta(x) \in C_c^{\infty}\lbr B_{\rho_b}(x_0)\rbr, & \eta \equiv 1 \ \, \text{on} \ B_{\rho_a}(x_0), & 0 \leq \eta \leq 1, & |\nabla \eta| \leq \frac{c}{\rho_b-\rho_a}.
\end{array}
\end{equation}
Then from the restriction $\rho\le \rhoa<\rhob\le4\rho$, it is clear that 
\begin{equation*}
\sup_{x\in B_{\rhob}(x_0)}\eta(x)\apprle_{(n)}\fint_{B_{\rhob}(x_0)}\eta(x)\ dx.
\end{equation*}
\begin{lemma}\label{caccio}
	Let $u$ be a weak solution of \cref{main_eqn} and  $B_{4\rho}(x_0)\subset\Om$. Then for $\rho\le \rhoa<\rhob\le4\rho$ and $\eta$ defined in \cref{cut_off_function1}, the following holds:
\begin{equation*}
\begin{array}{l}
\la^{p-2}\sup_{I_{\rhob}^\la(t_0)}\fint_{B_{\rhob}(x_0)}\abs{\frac{u-\avgsueta[\rhob]}{\rhob}}^2\eta^p\zeta^2\ dx+\fiint_{Q_{\rhob}^\la(z_0)}|\na u|^p\eta^p\zeta^2\ dz\\
\hspace*{3cm}\apprle_{(n,p,\La_0,\La_1)}\fiint_{Q_{\rhob}^\la(z_0)}\abs{\frac{u-\avgsueta[\rhob]}{\rhob-\rhoa}}^p\ dz+\la^{p-2}\fiint_{Q_{\rhob}^\la(z_0)}\abs{\frac{u-\avgsueta[\rhob]}{\rhob-\rhoa}}^2\eta^p\zeta\ dz\\
\hspace*{3cm}\qquad\qquad+\fiint_{Q_{\rhob}^\la(z_0)}|f||u-\avgsueta[\rhob]|\ dz.
\end{array}
\end{equation*}
\end{lemma}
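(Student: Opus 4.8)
The plan is to test the weak formulation of \cref{main_eqn} with a test function of the form $\varphi = \bigl(u - \avgsueta[\rhob]\bigr)\eta^p \zeta^2$, where $\zeta = \zeta(t)$ is a Lipschitz cut-off in time adapted to the intrinsic cylinder $Q_{\rhob}^\la(z_0)$ (for instance $\zeta$ vanishing at the bottom of $I^\la_{\rhob}(t_0)$ and equal to $1$ at the time slice where the supremum is attained, or more precisely a Steklov-averaged version thereof). The point of subtracting the $\eta^p$-weighted slice average $\avgsueta[\rhob]$ rather than a plain average is exactly the cancellation furnished by \cite[Lemma 5.4]{MR2342615}: when one differentiates $\tfrac12 |u - \avgsueta[\rhob]|^2$ in time and integrates against $\eta^p$, the contribution of $\tfrac{d}{dt}\avgsueta[\rhob]$ drops out, so the parabolic term produces $\tfrac12 \partial_t \int_{B_{\rhob}} |u - \avgsueta[\rhob]|^2 \eta^p\,dx$ without any extra ``gluing'' error. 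This is the device that lets us avoid a gluing lemma entirely, as advertised in the introduction.

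Concretely, I would proceed in the following steps. First, justify that $\varphi$ is an admissible test function: it lies in the right space and vanishes on the lateral boundary because $\eta \in C_c^\infty(B_{\rhob}(x_0))$; the time regularity is handled in the standard way via Steklov averages, after which one passes to the limit. Second, expand the three resulting terms: the time term, which by the cited cancellation lemma becomes (up to the Steklov limit) a pure boundary term $\tfrac12 \la^{p-2}\sup_{I^\la_{\rhob}} \int_{B_{\rhob}} |u-\avgsueta[\rhob]|^2 \eta^p \zeta^2\,dx$ minus a term with $\zeta \zeta'$ that is controlled using $|\zeta'| \lesssim \la^{p-2}(\rhob-\rhoa)^{-2}$ and yields the second term on the right-hand side; the diffusion term, where $\nabla \varphi = \nabla u\,\eta^p\zeta^2 + p(u - \avgsueta[\rhob])\eta^{p-1}\nabla\eta\,\zeta^2$, so the ellipticity bound $\langle \aa(x,t,\nabla u),\nabla u\rangle \geq \La_0|\nabla u|^p$ gives the good term $\int |\nabla u|^p \eta^p \zeta^2$, while the growth bound $|\aa| \leq \La_1 |\nabla u|^{p-1}$ on the cross term is absorbed by Young's inequality with weight $\eta$ (using $|\nabla \eta| \lesssim (\rhob-\rhoa)^{-1}$), producing the first term on the right-hand side; and the datum term, estimated trivially by $\int |f|\,|u - \avgsueta[\rhob]|\,\eta^p\zeta^2 \leq \int |f|\,|u-\avgsueta[\rhob]|\,dz$. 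Third, combine, divide by the measure of the intrinsic cylinder to pass to averaged integrals, and note that the factor $\la^{2-p}$ from $|I^\la_{\rhob}|$ against $\la^{p-2}$ from $|\zeta'|$ and from the time term produce exactly the stated powers of $\la$.

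The main obstacle is the rigorous handling of the time term: $u$ is only $C^0_{\loc}(I;L^2)$, so $\partial_t u$ does not exist pointwise and one must work with Steklov averages $[u]_h$, test with $\bigl([u]_h - \avgs{[u]_h}{\eta}\bigr)\eta^p \zeta^2$, invoke the cancellation identity of \cite[Lemma 5.4]{MR2342615} at the Steklov level (where the weighted slice average is differentiable), and only then let $h \to 0$; one also needs the weight comparison $\sup_{B_{\rhob}}\eta \lesssim \fint_{B_{\rhob}}\eta$ (valid here since $\rho \leq \rhoa < \rhob \leq 4\rho$) together with \cref{weight_lem} to ensure that $\avgsueta[\rhob]$ is comparable to the plain average, so that the quantities appearing are genuinely the natural oscillation quantities. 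Everything else is a routine Young-inequality bookkeeping exercise once the test function is in place.
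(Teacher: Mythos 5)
Your proposal is correct and follows essentially the same route as the paper: testing with $\lbr u-\avgsueta[\rho_b]\rbr\eta^p\zeta^2$, exploiting the cancellation of the $\tfrac{d}{dt}\avgsueta[\rho_b]$ contribution from \cite[Lemma 5.4]{MR2342615} (which the paper also highlights as the one nonstandard point, deferring the remaining Young-inequality bookkeeping to \cite{MR1749438,MR2491806,MR2468726}), with Steklov averaging for rigor in time. The only cosmetic difference is the precise choice of the time cut-off $\zeta$ (the paper takes $\zeta\in C_c^\infty(I_{\rhob}^\la(t_0))$ with $\zeta\equiv1$ on $I_{\rhoa}^\la(t_0)$), which does not affect the argument.
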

\begin{proof}
    Consider the following cut-off functions:
	\begin{equation*}
		\begin{array}{cccc}
		\zeta=\zeta(t) \in C_c^{\infty}\lbr I_{\rhob}^{\la}(t_0)\rbr, & \zeta \equiv 1 \ \, \text{on} \ I_{\rhoa}^{\la}(t_0), & 0 \leq \zeta \leq 1, & |\pa_t \zeta| \leq \frac{c}{\la^{2-p}(\rhob-\rhoa)^2}.
		\end{array}
	\end{equation*}
	Let us make use of  $\lbr u-\avgsueta[\rho_b]\rbr\eta^p\zeta^2$ as a test function in  \cref{main_eqn}. Then the proof of Caccioppoli inequality follows verbatim as in  \cite[Lemma 5.4]{MR2342615}. There is one crucial cancellation that was used in \cite[Lemma 5.4]{MR2342615} and to highlight its importance, we recall that here. Using the test function in the time derivative, we see that
	\[
	\begin{array}{rcl}
	    \iint_{Q_{\rhob}}u \frac{d}{dt}\lbr[[]\lbr u-\avgsueta[\rho_b]\rbr\eta^p\zeta^2\rbr[]] &  = &  \iint_{Q_{\rhob}}\lbr u - \avgsueta[\rho_b]\rbr \frac{d}{dt}\lbr[[]\lbr u-\avgsueta[\rho_b]\rbr\eta^p\zeta^2\rbr[]]  \\
	    && + \iint_{Q_{\rhob}}\avgsueta[\rho_b] \frac{d}{dt}\lbr[[]\lbr u-\avgsueta[\rho_b]\rbr\eta^p\zeta^2\rbr[]],
	    \end{array}
	\]
	which requires us to estimate the last term which would not exist if $\avgsueta$ was independent of time. But the crucial cancellation that we need provides that this term is zero, which can be  formally seen as follows:
	\[
	    \int_{I_{\rhob}^\la(t_0)} \zeta^2(t)\underbrace{\lbr \int_{B_{\rhob}(x_0)} u(x,t) \eta^p(x) \ dx  - \frac{\int_{B_{\rhob}(x_0)} \eta^p(x) u(x,t) \ dx}{\int_{B_{\rhob}(x_0)} \eta^p(x) \ dx} \int_{B_{\rhob}(x_0)} \eta^p(x) \ dx \rbr}_{=0} \frac{d\avgsueta[\rho_b]}{dt}  \ dt = 0.
	\]
Once we have this cancellation, the rest of the Caccioppoli estimate follows verbatim as in \cite{MR1749438,MR2491806,MR2468726}.
\end{proof}

\begin{remark}
    \label{replace_eta}
    We will use the notation $\avgsueta[\rho_b]$ with the superscript $\rho_b$ to denote that the cut-off function $\eta$ is supported on $B_{\rhob}$ since we need to keep track of this radius in several forthcoming estimates.
\end{remark}

\subsection{Some intermediate estimates}
We need the following important estimate that will make use of Sobolev embedding:
\begin{lemma}\label{sobolev_bound}
    Recall the choice of $\mfa$ from \cref{mfa_high} and suppose \cref{le_la} holds. Then  for any $\rho \leq r \leq 4\rho$, the following holds:
    \begin{gather*}
         \fint_{I_r^{\la}(t_0)}\lbr \fint_{B_r(x_0)} \abs{\frac{u - \avgsueta[r]}{r}}^{\mfa} \ dx \rbr^{\frac{1}{\mfa}} dt  \apprle \la \txt{and}
         \fint_{I_r^{\la}(t_0)}\lbr \fint_{B_r(x_0)} \abs{\frac{u - \avgsueta[r]}{r}}^{2} \ dx \rbr^{\frac{1}{2}} dt \apprle \la.
    \end{gather*}
Note that when $\mfa =\infty$, we use the notation $\lbr\fint |f|^{\infty} \rbr^{\frac{1}{\infty}} := \| f\|_{\infty}$.
\end{lemma}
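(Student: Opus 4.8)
The plan is to derive the second ($L^{2}$) inequality from the first. The restrictions collected in \cref{def_de} give $\de p\ge \frac{n}{n+2}\max\{p,2\}\ge\frac{2n}{n+2}$, hence $\mfa=(\de p)^{*}=\frac{n\de p}{n-\de p}\ge 2$ when $p\le n$ (see \cref{mfa_high}), while $\mfa=\infty$ when $p>n$. In either case Jensen's inequality yields $\lbr\fint_{B_r(x_0)}\abs{h}^{2}\,dx\rbr^{1/2}\le\lbr\fint_{B_r(x_0)}\abs{h}^{\mfa}\,dx\rbr^{1/\mfa}$ for a.e.\ $t$, and integrating over $I_r^\la(t_0)$ transfers the $L^{\mfa}$ bound to the $L^{2}$ bound. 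So it suffices to prove the first estimate.

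I would prove the $L^{\mfa}$ estimate slice by slice: for a.e.\ $t\in I_r^\la(t_0)$ we have $u(\cdot,t)\in W^{1,p}(B_r(x_0))$. First I would replace the $\eta$-weighted mean $\avgsueta[r]$ by the genuine mean $\avgs{u}{B_r(x_0)}(t)$. Since the cut-off $\eta$ satisfies $\sup_{B_r}\eta\apprle\fint_{B_r}\eta$, the argument of \cref{weight_lem} (which works for any exponent $s\ge1$, and for $s=\infty$ via the oscillation, since $\sup\eta^{s}\le\bar c^{s}\fint\eta^{s}$ for $0\le\eta\le1$) gives $\fint_{B_r}\abs{u-\avgsueta[r]}^{s}\,dx\apprle\fint_{B_r}\abs{u-\avgs{u}{B_r(x_0)}(t)}^{s}\,dx$ for the exponents $s\in\{2,\mfa\}$ in play. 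Next, if $p\le n$ then $1<\de p<n$ (using $\de>1/p$ and $\de<1$), so $\mfa$ is exactly the Sobolev conjugate of $\de p$ and \cref{sob_poin} applied with gradient exponent $\de p$ gives, for a.e.\ $t$,
\[
\lbr\fint_{B_r(x_0)}\abs{\frac{u-\avgs{u}{B_r(x_0)}(t)}{r}}^{\mfa}\,dx\rbr^{\frac1\mfa}\apprle_{(n,p,\de)}\lbr\fint_{B_r(x_0)}\abs{\na u}^{\de p}\,dx\rbr^{\frac{1}{\de p}};
\]
if $p>n$, where $\mfa=\infty$, I would instead extend $u-\avgs{u}{B_r(x_0)}(t)$ by \cref{ext}, absorb the resulting lower-order term $r^{-p}\int_{B_r}\abs{u-\avgs{u}{B_r(x_0)}(t)}^{p}$ with the Poincar\'e inequality, and apply Morrey's embedding \cref{Morrey} to obtain $\lVert u-\avgs{u}{B_r(x_0)}(t)\rVert_{L^\infty(B_r(x_0))}/r\apprle_{(n,p)}\lbr\fint_{B_r(x_0)}\abs{\na u}^{p}\,dx\rbr^{1/p}$.

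It then remains to integrate these pointwise-in-$t$ bounds over $I_r^\la(t_0)$. I would move the outer exponent ($\tfrac1{\de p}$, resp.\ $\tfrac1p$) past the time average with Jensen's inequality (legitimate since $\de p\ge1$) and then enlarge the gradient exponent from $\de p$ to $p$ by H\"older, which bounds the left-hand side by $\lbr\fiint_{\Qrla}(\abs{\na u}+1)^{p}\,dz\rbr^{1/p}$. Finally, since $\rho\le r\le 4\rho$ we have $\Qrla\subseteq\Qfrho$ and $\abs{\Qfrho}/\abs{\Qrla}=(4\rho/r)^{n+2}\le4^{n+2}$, so the intrinsic bound \cref{le_la} gives $\fiint_{\Qrla}(\abs{\na u}+1)^{p}\,dz\le4^{n+2}\la^{p}$, and the required estimate $\apprle\la$ follows.

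The only step demanding real care is the case $p>n$: \cref{Morrey} is phrased for functions vanishing on the boundary, so one genuinely has to pass through the extension \cref{ext} and use Poincar\'e to kill the lower-order term the extension produces (alternatively, invoke Morrey's inequality directly on the ball for $W^{1,p}(B_r(x_0))$). Everything else is routine bookkeeping with H\"older, Jensen, the replacement of $\avgsueta[r]$ by the plain mean, and the intrinsic bound \cref{le_la}.
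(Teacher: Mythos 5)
Your proof is correct and follows essentially the same route as the paper: Sobolev--Poincar\'e with gradient exponent $\de p$ on each time slice when $p\le n$, and extension plus Morrey plus Poincar\'e when $p>n$, followed by integration in time and the intrinsic bound \cref{le_la}. The only (harmless) variation is that you deduce the $L^{2}$ estimate from the $L^{\mfa}$ one via Jensen using $\mfa\ge 2$, whereas the paper obtains it directly from the standard Sobolev--Poincar\'e inequality since $p^{*}>2$.
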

\begin{proof}
    Let us  prove the first estimate. We shall split the proof into two cases depending on the choice of $\mfa$ as given in \cref{mfa_high}. 
    \begin{description}
        \item[Case $p \leq n$:] In this case, we can apply Sobolev Poincare inequality in the space variable to get
        \[
        \fint_{I_r^{\la}(t_0)}\lbr \fint_{B_r(x_0)} \abs{\frac{u - \avgsueta[r]}{r}}^{\mfa} \ dx \rbr^{\frac{1}{\mfa}}\ dt 
         \overset{\text{\cref{sob_poin}}}{\apprle}  \lbr \fint_{I_r^{\la}(t_0)}\lbr \fint_{B_r(x_0)} |\nabla u|^{\de p} \ dx \rbr^{\frac{1}{\de p}} \ dt \rbr
         \overset{\cref{le_la,def_de}}{\apprle}  \la.
        \]

        \item[Case $p > n$:] In this case, note that $\mfa' = 1$. Let us first apply \cref{Morrey} to extend $u - \avgsueta[r]$ by a function $v$ on $B_{2r}(x_0)$. Thus we get the following sequence of estimates:
        \begin{equation*}
        \begin{array}{rcl}
            \sup_{B_r(x_0)} \abs{u - \avgsueta[r]} \overset{\text{\cref{ext}}}{\leq} \sup_{B_{2r}(x_0)} |v| & \overset{\text{\cref{Morrey}}}{\apprle}&  r\lbr \fint_{B_{2r}(x_0)} |\nabla v|^p \ dx \rbr^{\frac{1}{p}} \\
            & \overset{\text{\cref{ext}}}{\apprle} &  r \lbr \fint_{B_r(x_0)} |\nabla u|^p + \frac{\abs{u - \avgsueta[r]}^p}{r^p} \ dx \rbr^{\frac{1}{p}} \\
            & \overset{\redlabel{pna}{a}}{\apprle} & r \lbr \fint_{B_r(x_0)} |\nabla u|^p \ dx \rbr^{\frac{1}{p}},
        \end{array}
        \end{equation*}
        where to obtain \redref{pna}{a}, we applied Poincare's inequality. Thus, we get
        \begin{equation}\label{infity_estimate}
            \fint_{I_r^{\la}(t_0)} \sup_{B_r(x_0)} \frac{\abs{u - \avgsueta[r]}}{r} \ dt \apprle  \lbr \fiint_{Q_r^{\la}(z_0)} |\nabla u|^p \ dz\rbr^{\frac{1}{p}} \overset{\cref{le_la}}{\apprle} \la.
        \end{equation}

    \end{description}
    
    For the second estimate, we note that since $p > \frac{2n}{n+2}$, we have the Sobolev exponent $p^* = \frac{np}{n-p} > 2$ and thus, we can apply the standard Sobolev Poincare inequality to get the required estimate.
\end{proof}

We now prove an upper bound for the time term from \cref{caccio}.
\begin{lemma}\label{sup_2_bnd}
    Assuming \cref{caccio} holds, then for any $2\rho \leq r \leq 4\rho$, we have
    \begin{equation*}
		\sup_{I_{r}^\la(t_0)}\fint_{B_{r}(x_0)}\abs{\frac{u-\avgs{u}{\eta}^r(t)}{r}}^2 \eta^p \zeta^2\ dx\apprle \la^2,
	\end{equation*}
	where $\eta \in C_c^{\infty}(B_r(x_0)), \zeta \in C^{\infty}_c(I_{r}^{\la})(t_0)$ with $\eta \equiv 1$ on $B_s(x_0)$ and $\zeta \equiv 1$ on $I_s^{\la}(t_0)$ for some $\rho \leq s \leq r-\frac{\rho}{2}$.
\end{lemma}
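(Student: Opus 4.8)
The plan is to apply the Caccioppoli estimate \cref{caccio} with $\rhoa=s$ and $\rhob=r$, so that the cut-offs produced there are exactly the $\eta,\zeta$ in the statement, and then to bound each of the three resulting right-hand side terms by $\apprle\la^p$. Since $2\rho\le r\le4\rho$ and $\rho\le s\le r-\tfrac\rho2$ we have $r-s\approx\rho\approx r$, so the factor $(r-s)^{-1}$ may everywhere be replaced by $r^{-1}$ up to a harmless constant. Abbreviate
\[
\mfs:=\sup_{I_{r}^\la(t_0)}\fint_{B_{r}(x_0)}\abs{\frac{u-\avgsueta[r]}{r}}^2\eta^p\zeta^2\ dx,
\]
which is finite because $u\in C(0,T;L^2(\Om))$ and $\zeta$ has compact support in time. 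Discarding the nonnegative gradient term on the left of \cref{caccio}, we obtain $\la^{p-2}\mfs\apprle\mathrm{(I)}+\mathrm{(II)}+\mathrm{(III)}$, where
\[
\mathrm{(I)}=\fiint_{\Qrla}\abs{\tfrac{u-\avgsueta[r]}{r}}^p dz,\qquad \mathrm{(II)}=\la^{p-2}\fiint_{\Qrla}\abs{\tfrac{u-\avgsueta[r]}{r}}^2\eta^p\zeta\ dz,\qquad \mathrm{(III)}=\fiint_{\Qrla}|f|\,|u-\avgsueta[r]|\ dz,
\]
so it suffices to bound each of $\mathrm{(I)},\mathrm{(II)},\mathrm{(III)}$ by $\apprle\la^p$.

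For $\mathrm{(I)}$ I pass from the weighted mean to $\avgsub$ via \cref{weight_lem}, apply Sobolev--Poincar\'e (\cref{sob_poin}, or the Morrey argument from the proof of \cref{sobolev_bound} when $p>n$) on each time slice to get $\fint_{B_r}\abs{\tfrac{u-\avgsub}{r}}^p dx\apprle\fint_{B_r}|\na u|^p dx$, and finish with \cref{le_la}: $\mathrm{(I)}\apprle\fiint_{\Qrla}|\na u|^p dz\apprle\fiint_{\Qfrho}(|\na u|+1)^p dz\apprle\la^p$. For $\mathrm{(III)}$, H\"older in the space variable with exponents $\mfa',\mfa$ (using $f=f(x)$ to pull the $f$-factor out of the time integral) together with \cref{sobolev_bound} gives
\[
\mathrm{(III)}\apprle r\lbr\fint_{B_{4\rho}(x_0)}|f|^{\mfa'}dx\rbr^{1/\mfa'}\fint_{I_{r}^\la(t_0)}\lbr\fint_{B_{r}(x_0)}\abs{\tfrac{u-\avgsueta[r]}{r}}^{\mfa}dx\rbr^{1/\mfa}dt\apprle r\la\lbr\fint_{B_{4\rho}(x_0)}|f|^{\mfa'}dx\rbr^{1/\mfa'},
\]
and by \cref{le_la} the last bracket is $\apprle\lbr\la^{p-\nonfe}(4\rho)^{-\mfa'}\rbr^{1/\mfa'}$; since the elementary identity $\nonfe=\tfrac{\mfa-p}{\mfa-1}$ yields $p-\nonfe=(p-1)\mfa'$, this equals $\apprle\la^{p-1}r^{-1}$, hence $\mathrm{(III)}\apprle\la^p$. (The case $p>n$, where $\mfa=\infty$ and $\mfa'=1$, is identical using the $L^\infty$ form of \cref{sobolev_bound}.)

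The crux is $\mathrm{(II)}$, which I would not estimate crudely but reabsorb. Setting $A(t):=\fint_{B_r(x_0)}\abs{\tfrac{u-\avgsueta[r]}{r}}^2\eta^p dx$ and using the pointwise identity $\zeta A=(\zeta^2A)^{1/2}A^{1/2}$,
\[
\fiint_{\Qrla}\abs{\tfrac{u-\avgsueta[r]}{r}}^2\eta^p\zeta\ dz=\fint_{I_r^\la(t_0)}\zeta A\,dt\le\lbr\sup_{I_r^\la(t_0)}\zeta^2A\rbr^{1/2}\fint_{I_r^\la(t_0)}A^{1/2}\,dt\le\mfs^{1/2}\fint_{I_r^\la(t_0)}\lbr\fint_{B_r(x_0)}\abs{\tfrac{u-\avgsueta[r]}{r}}^2dx\rbr^{1/2}dt,
\]
and the last time integral is $\apprle\la$ by the second estimate of \cref{sobolev_bound}. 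Thus $\mathrm{(II)}\apprle\la^{p-1}\mfs^{1/2}$, and Young's inequality gives $\la^{p-1}\mfs^{1/2}\le\ve\la^{p-2}\mfs+C_\ve\la^p$ for any $\ve>0$. Feeding the bounds for $\mathrm{(I)},\mathrm{(II)},\mathrm{(III)}$ into $\la^{p-2}\mfs\apprle\mathrm{(I)}+\mathrm{(II)}+\mathrm{(III)}$ and choosing $\ve$ small enough to absorb $\ve\la^{p-2}\mfs$ into the left-hand side (legitimate since $\mfs<\infty$), we conclude $\la^{p-2}\mfs\apprle\la^p$, i.e. $\mfs\apprle\la^2$, which is the claim. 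The only delicate points are the bookkeeping of weighted averages (handled by \cref{weight_lem}) and arranging the splitting of $\mathrm{(II)}$ so that it reproduces exactly the $L^1_t(L^2_x)$ quantity controlled in \cref{sobolev_bound}; note the argument is uniform in $p$ and needs no separate treatment of the singular range $p<2$.
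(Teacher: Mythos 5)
Your proposal is correct and follows essentially the same route as the paper: apply \cref{caccio}, bound the first term by Sobolev--Poincar\'e and \cref{le_la}, bound the data term by H\"older with exponents $\mfa',\mfa$ together with \cref{sobolev_bound} and \cref{le_la} (your explicit verification that $p-\nonfe=(p-1)\mfa'$ is exactly what the paper uses implicitly), and reabsorb the middle term via Cauchy--Schwarz, the second estimate of \cref{sobolev_bound}, and Young's inequality. Your remark that the absorption is legitimate because the supremum is finite (from $u\in C(0,T;L^2(\Om))$) is a small point the paper leaves tacit, but otherwise the two arguments coincide.
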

\begin{proof}
Let $\rho \leq \rhoa \leq \frac32\rho$ and $2\rho \leq \rhob \leq 4\rho$ by any two radii, which implies $ \frac12\rho \leq \rhob-\rhoa\leq 3\rho$. In the notation of the lemma, this is the choice $s=\rhoa$ and $r = \rhob$ and $\eta \in C_c^{\infty}(B_{\rhob}(x_0))$ with $|\nabla \eta| \leq \frac1{\rhob-\rhoa}$ and  $\zeta \in C^{\infty}(I_{\rhob}^{\la}(t_0))$ with  $|\pa_t \zeta| \leq \frac{1}{\la^{2-p}(\rhob-\rhoa)^2}$. Let us make use of \cref{caccio} and estimate each of the terms on the right-hand side of \cref{caccio} to get:
\begin{description}
    \item[Estimate for the first term:] In order to estimate this, we proceed as follows:
    \begin{equation*}
        \begin{array}{rcl}
            \fiint_{Q_{\rhob}^\la(z_0)}\abs{\frac{u-\avgsueta[\rho_b]}{\rhob-\rhoa}}^p\ dz \overset{\text{\cref{sob_poin}}}{\apprle} \lbr \frac{\rhob}{\rhob-\rhoa}\rbr^p \fiint_{Q_{\rhob}^{\la}(z_0)} |\nabla u|^p \ dz\overset{\cref{le_la}}{\apprle}  \lbr \frac{\rhob}{\rhob-\rhoa}\rbr^p \la^p \apprle \la^p,
        \end{array}
    \end{equation*}
    where to obtain the last estimate, we made use of the restrictions on $\rhoa$ and $\rhob$. 
   \item[Estimate for the second term:]  We proceed as follows:
\begin{equation*}
				\begin{array}{rcl}
				II
				&=&\la^{p-2}\fint_{I_{\rhob}^\la(t_0)}\fint_{B_{\rhob}(x_0)}\abs{\frac{u-\avgsueta[\rho_b]}{\rhob-\rhoa}}\eta^\frac{p}{2}\zeta \abs{\frac{u-\avgsueta[\rho_b]}{\rhob-\rhoa}}\eta^\frac{p}{2}\zeta \ dx\ dt\\
				&\leq&\la^{p-2}\fint_{I_{\rhob}^\la(t_0)}\lbr\fint_{B_{\rhob}(x_0)}\abs{\frac{u-\avgsueta[\rho_b]}{\rhob-\rhoa}}^2\eta^p\zeta^2\ dx\rbr^\frac{1}{2}\lbr\fint_{B_{\rhob}(x_0)}\abs{\frac{u-\avgsueta[\rho_b]}{\rhob-\rhoa}}^2\ dx\rbr^\frac{1}{2}\ dt\\
				&\overset{\text{\cref{sobolev_bound}}}{\apprle}&\la^{p-1}\lbr\sup_{I_{\rhob}^\la(t_0)}\fint_{B_{\rhob}(x_0)}\abs{\frac{u-\avgsueta[\rho_b]}{\rhob-\rhoa}}^2\eta^p\zeta^2\ dx\rbr^\frac{1}{2}.
				\end{array}
			\end{equation*}
			
			Therefore applying Young's inequality, for any $\ve \in (0,1)$, we obtain 
			\begin{equation*}
			   \begin{array}{rcl}
			   II&\apprle& \ve \la^{p-2}\sup_{I_{\rhob}^\la(t_0)}\fint_{B_{\rhob}(x_0)}\abs{\frac{u-\avgsueta[\rho_b]}{\rhob-\rhoa}}^2\eta^p\zeta^2\ dx+\frac{1}{\ve}\la^{p}.
			   \end{array}			   
			\end{equation*}
		
    \item[Estimate for the third term:] This is an important term to estimate and we proceed as follows:
    \begin{equation*}
		   	\begin{array}{rcl}
		   	&&\fiint_{Q_{\rhob}^\la(z_0)}|f||u-\avgsueta[\rho_b]|\ dz\\
		   	&&\leq\fint_{I_{\rhob}^{\la}(t_0)}\lbr\fint_{B_{\rhob}(x_0)}(\rhob)^{\mfa'}|f|^{\mfa'}\ dx\rbr^\frac{1}{\mfa'}\lbr\fint_{B_{\rhob}(x_0)}\abs{\frac{u-\avgsueta[\rho_b]}{\rhob}}^{\mfa}\ dx\rbr^\frac{1}{\mfa}\ dt\\
		   	&&\overset{\text{\cref{sobolev_bound}}}{\apprle}\lbr\fint_{B_{\rhob}(x_0)}(\rhob)^{\mfa'}|f|^{\mfa'}\ dx\rbr^\frac{1}{\mfa'}\la
		   	\overset{\cref{le_la}}{\le}\la^{p}.
		   	\end{array}
		   	\end{equation*}
\end{description}
Thus combining the above estimates into \cref{caccio} and  recalling the restrictions on  $\rho \leq \rhoa \leq 2\rho$ and $3\rho < \rhob \leq 4\rho$, we get
\[
    \la^{p-2}\sup_{I_{\rhob}^\la(t_0)}\fint_{B_{\rhob}(x_0)}\abs{\frac{u-\avgsueta[\rho_b]}{\rhob}}^2\eta^p \zeta^2\ dx \apprle \ve \la^{p-2}\sup_{I_{\rhob}^\la(t_0)}\fint_{B_{\rhob}(x_0)}\abs{\frac{u-\avgsueta[\rho_b]}{\rhob}}^2\eta^p\zeta^2\ dx+\frac{1}{\ve}\la^{p}.
\]
Choosing $\ve$ small enough, the proof of the lemma follows.
\end{proof}
\begin{corollary}\label{cor_sup_2_bnd}
    With notation as in \cref{sup_2_bnd}, there holds
    \begin{equation*}
		 \sup_{I_{s}^\la(t_0)}\frac{1}{|B_r(x_0)|}\int_{B_{s}(x_0)}\abs{\frac{u-\avgs{u}{B_s(x_0)}(t)}{r}}^2 \ dx \apprle \la^2.
	\end{equation*}
\end{corollary}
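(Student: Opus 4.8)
The plan is to obtain this as a direct consequence of \cref{sup_2_bnd}, by restricting attention to the region where the cut-off functions $\eta=\eta(x)$ and $\zeta=\zeta(t)$ from that lemma are identically one, and by replacing the weighted mean $\avgsueta[r]$ with the genuine mean $\avgs{u}{B_s(x_0)}(t)$.

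The first step is to pass from $\avgsueta[r]$ to $\avgs{u}{B_s(x_0)}(t)$. For almost every $t$, the constant $\avgs{u}{B_s(x_0)}(t)=\fint_{B_s(x_0)}u(\cdot,t)\,dx$ minimizes $c\mapsto\int_{B_s(x_0)}|u(x,t)-c|^2\,dx$ over all constants $c$; applying this with $c=\avgsueta[r]$ gives, for a.e.\ $t$,
\[
\int_{B_s(x_0)}\abs{\frac{u-\avgs{u}{B_s(x_0)}(t)}{r}}^2\,dx\le\int_{B_s(x_0)}\abs{\frac{u-\avgsueta[r]}{r}}^2\,dx.
\]
The second step is to relate the right-hand side to the weighted quantity controlled by \cref{sup_2_bnd}. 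Since $\eta\equiv1$ on $B_s(x_0)$ with $0\le\eta\le1$ on $B_r(x_0)$, and $\zeta\equiv1$ on $I_s^\la(t_0)$, for every $t\in I_s^\la(t_0)$ the weight $\eta^p\zeta^2$ equals $1$ on $B_s(x_0)$ and is nonnegative on all of $B_r(x_0)$. Hence, for such $t$, enlarging the domain of integration to $B_r(x_0)$ while keeping the normalization $\tfrac{1}{|B_r(x_0)|}$ and inserting the weight $\eta^p\zeta^2$ only increases the right-hand side:
\[
\frac{1}{|B_r(x_0)|}\int_{B_s(x_0)}\abs{\frac{u-\avgsueta[r]}{r}}^2\,dx\le\fint_{B_r(x_0)}\abs{\frac{u-\avgsueta[r]}{r}}^2\eta^p\zeta^2\,dx.
\]

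Finally, I would chain the two displayed inequalities, take the supremum over $t\in I_s^\la(t_0)$, use $I_s^\la(t_0)\subset I_r^\la(t_0)$ (valid since $s\le r$) to enlarge the time interval in the supremum, and invoke \cref{sup_2_bnd} to bound the result by $\apprle\la^2$. I do not expect a genuine obstacle: the argument uses only the $L^2$-minimality of the mean together with the defining properties of $\eta$ and $\zeta$ recorded in \cref{sup_2_bnd}. The single point to track carefully is that both sides are normalized by $|B_r(x_0)|$ rather than $|B_s(x_0)|$, which is precisely how the statement is phrased and is exactly what makes the passage from integration over $B_s(x_0)$ to integration over $B_r(x_0)$ harmless.
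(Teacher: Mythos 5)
Your proof is correct and follows essentially the same route as the paper: reduce to \cref{sup_2_bnd} by replacing the spatial mean via $L^2$-minimality and then enlarging the domain using that $\eta^p\zeta^2\equiv 1$ on $B_s(x_0)\times I_s^{\la}(t_0)$ and is nonnegative elsewhere. The only (harmless) difference is that you compare directly against the weighted mean $\avgsueta[r]$, which lets you bypass the appeal to \cref{weight_lem} that the paper uses to pass between $\avgs{u}{B_r(x_0)}(t)$ and $\avgsueta[r]$.
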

\begin{proof}
    From triangle inequality, we have that $\int_{A} |f - \avgs{f}{A}|^q \ dx \leq 2^{q-1}\inf_{z \in \RR} \int_A |f - z|^q \ dx$ where $A$ is some measurable set, we obtain the following sequence of inequalities:
    \begin{equation*}
        \begin{array}{rcl}
            \sup_{I_{s}^\la(t_0)}\frac{1}{|B_r(x_0)|}\int_{B_{s}(x_0)}\abs{\frac{u-\avgs{u}{B_s(x_0)}(t)}{r}}^2 \ dx & \apprle & \sup_{I_{s}^\la(t_0)}\frac{1}{|B_r(x_0)|}\int_{B_{s}(x_0)}\abs{\frac{u-\avgs{u}{B_r(x_0)}(t)}{r}}^2 \ dx \\
            & \apprle & \sup_{I_{r}^\la(t_0)}\fint_{B_{r}(x_0)}\abs{\frac{u-\avgs{u}{B_r(x_0)}(t)}{r}}^2\eta^p \zeta^2 \ dx \\
            & \overset{\text{\cref{weight_lem}}}{\apprle}& \sup_{I_{r}^\la(t_0)}\fint_{B_{r}(x_0)}\abs{\frac{u-\avgs{u}{\eta}^r(t)}{r}}^2\eta^p \zeta^2 \ dx \\
            & \overset{\text{\cref{sup_2_bnd}}}{\apprle}& \la^2.
        \end{array}
    \end{equation*}

\end{proof}

\subsection{Reverse H\"older inequality in an intrinsic cylinder}
\begin{lemma}\label{sig=p}
	Assuming \cref{caccio} holds and with notation as in \cref{sup_2_bnd}, for any $\ve \in (0,1)$, the following holds:
	\begin{equation*}
		\fiint_{Q_{2\rho}^\la(z_0)}\abs{\frac{u-\avgs{u}{B_{2\rho}(x_0)}(t)}{2\rho}}^p\ dz \apprle_{(n,p,\La_0,\La_1,\de)}\ve\la^p+C(\ve)\lbr\fiint_{Q_{2\rho}^{\la}(z_0)}|\na u|^q\ dz\rbr^\frac{p}{q}.
	\end{equation*}
	Recall that $q$ is from \cref{reverse_exp}.
\end{lemma}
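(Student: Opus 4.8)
The plan is to prove the estimate slice-by-slice in time: on each time slice we interpolate the $L^p$-norm of $w(\cdot,t):=u(\cdot,t)-\avgs{u}{B_{2\rho}(x_0)}(t)$ between $W^{1,q}$ (which, since $w$ has zero mean on $B_{2\rho}(x_0)$, reduces by Poincar\'e to $\|\nabla u\|_{L^q}$) and $L^2$, using the Gagliardo--Nirenberg--Sobolev--Poincar\'e inequality of \cref{g_n}. Then I would integrate in time, use the uniform $L^2$-bound of \cref{cor_sup_2_bnd} to dispose of the $L^2$-factor by a power of $\la$, pull the time integral inside via Jensen, and finish with Young's inequality to split the mixed term into the two pieces on the right-hand side.

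More precisely, I would first fix $\vartheta\in(0,1)$ so that both the balance condition $-\tfrac np\le \vartheta(1-\tfrac nq)-(1-\vartheta)\tfrac n2$ of \cref{g_n} (applied with $\sigma=p$, the ``$r$'' of that lemma equal to $2$, and its ``$q$'' equal to our $q$ from \cref{reverse_exp}) holds, and so that $\tfrac{\vartheta p}{q}\le 1$. Applying \cref{g_n} to $h=w(\cdot,t)$ on $B_{2\rho}(x_0)$ for a.e.\ $t$, and then Poincar\'e's inequality on the gradient factor (note $\nabla w=\nabla u$ and $\avgs{u}{B_{2\rho}(x_0)}(t)=\fint_{B_{2\rho}(x_0)}u(\cdot,t)\,dx$), yields
\[
\fint_{B_{2\rho}(x_0)}\abs{\frac{w(\cdot,t)}{2\rho}}^p dx \apprle \lbr\fint_{B_{2\rho}(x_0)}|\nabla u|^q dx\rbr^{\frac{\vartheta p}{q}}\lbr\fint_{B_{2\rho}(x_0)}\abs{\frac{w(\cdot,t)}{2\rho}}^2 dx\rbr^{\frac{(1-\vartheta)p}{2}}.
\]
Bounding the last factor by its supremum over $I_{2\rho}^\la(t_0)$ and invoking \cref{cor_sup_2_bnd} (applied with, say, $s=2\rho$ and $r=3\rho$, absorbing the harmless geometric constant $(3/2)^{n+2}$) gives that this factor is $\apprle \la^{(1-\vartheta)p}$ uniformly in $t$. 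Taking the average over $I_{2\rho}^\la(t_0)$ and using Jensen's inequality in time, which is legitimate since $\tfrac{\vartheta p}{q}\le 1$, I obtain
\[
\fiint_{Q_{2\rho}^\la(z_0)}\abs{\frac{u-\avgs{u}{B_{2\rho}(x_0)}(t)}{2\rho}}^p dz \apprle \la^{(1-\vartheta)p}\lbr\fiint_{Q_{2\rho}^\la(z_0)}|\nabla u|^q dz\rbr^{\frac{\vartheta p}{q}}.
\]
Finally, Young's inequality with the conjugate exponents $\tfrac1{1-\vartheta}$ and $\tfrac1\vartheta$ applied to the product on the right gives, for every $\ve\in(0,1)$,
\[
\la^{(1-\vartheta)p}\lbr\fiint_{Q_{2\rho}^\la(z_0)}|\nabla u|^q dz\rbr^{\frac{\vartheta p}{q}} \le \ve\la^p + C(\ve)\lbr\fiint_{Q_{2\rho}^\la(z_0)}|\nabla u|^q dz\rbr^{\frac pq},
\]
which is the claimed inequality.

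The hard part will be the very first step: checking that the interpolation parameter $\vartheta$ can be chosen in $(0,1)$ so as to satisfy \emph{both} the admissibility condition of \cref{g_n} and the Jensen-compatibility condition $\vartheta p\le q$. This is exactly the point where the sub-criticality built into the definition of $q$ and the lower bounds on $\de$ in \cref{reverse_exp}--\crefformat{equation}{} are used; in the degenerate sub-cases (for instance $q=\tfrac{2n}{n+2}$ when $p\le 2$, where the denominator in the balance condition collapses) one has to verify directly that the standing assumption $p>\tfrac{2n}{n+2}$ still leaves an admissible $\vartheta$, e.g.\ $\vartheta=\tfrac{2n}{(n+2)p}<1$. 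Everything after that is a routine chain of Poincar\'e, the already-established $L^2$-bound \cref{cor_sup_2_bnd}, Jensen, and Young, with the constants depending only on $n,p,\La_0,\La_1,\de$ (the $\La_0,\La_1,\de$-dependence entering through \cref{cor_sup_2_bnd} and through the choice of $q$ and $\vartheta$).
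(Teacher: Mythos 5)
Your proposal is correct and follows essentially the same route as the paper: apply \cref{g_n} slicewise with $\sigma=p$, $r=2$, bound the $L^2$-factor by \cref{cor_sup_2_bnd}, and finish with Young's inequality. The only difference is that the paper fixes $\vartheta=\tfrac{q}{p}$ outright, which makes the gradient factor appear to the power exactly $1$ (so no Jensen step is needed) and reduces the admissibility condition to $q\ge\tfrac{np}{n+2}$, which is built into \cref{reverse_exp}; this also disposes of the ``hard part'' you flag at the end.
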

\begin{proof}
    We apply \cref{g_n} with $\sig=p$, $q$ as defined in \cref{reverse_exp}, $r=2$ and $\vartheta=\frac{q}{p}$. It is easy to see that the choice of exponents satisfies the hypothesis of \cref{g_n} thanks to the restriction from \cref{reverse_exp}. In particular, we have
    \begin{equation*}
	-\frac{n}{2}\le\vartheta(1-\frac{n}{q})-(1-\vartheta)\frac{n}{2}\quad \Longleftarrow\quad  \frac{np}{n+2}\le q.
	\end{equation*}
    Therefore, we have the following sequence of estimates:
	\begin{equation*}
	\begin{array}{rcl}
		\fiint_{Q_{2\rho}^\la(z_0)}\abs{\frac{u-\avgs{u}{B_{2\rho}(x_0)}(t)}{2\rho}}^p\ dz
		&\overset{\redlabel{6.17a}{a}}{\le} &  \lbr \fiint_{Q_{2\rho}^{\la}(z_0)}|\na u|^q\ dz\rbr \lbr\sup_{I_{2\rho}^\la(t_0)}\fint_{B_{2\rho}(x_0)}\abs{\frac{u-\avgs{u}{B_{2\rho}(x_0)}(t)}{2\rho}}^2\ dx\rbr^{\frac{p(1-\vartheta)}{2}}\\
		&\overset{\redlabel{6.17c}{b}}{\le} & \la^{p(1-\vartheta)}\fiint_{Q_{2\rho}^{\la}(z_0)}|\na u|^q\ dz,
	\end{array}
	\end{equation*}
    where to obtain \redlabel{6.17a}{a}, we made use of \cref{g_n} followed by Poincar\'e inequality and to obtain \redlabel{6.17c}{b}, we made use of \cref{cor_sup_2_bnd} applied with $s= 2\rho$ and $r = 3\rho$. Finally, an application of Young's inequality gives the desired result.
\end{proof}

\begin{lemma}\label{sig=2}
	Assuming \cref{caccio} holds and with notation as in \cref{sup_2_bnd}, for any $\ve \in (0,1)$, the following holds:
	\begin{equation*}
	\la^{p-2}\fiint_{Q_{2\rho}^\la(z_0)}\abs{\frac{u-\avgsutrho}{2\rho}}^2\ dz\apprle_{(n,p,\La_0,\La_1,\de)} \ve\la^p+C(\ve)\lbr\fiint_{Q_{2\rho}^\la(z_0)}|\na u|^q\ dz\rbr^\frac{p}{q}.
	\end{equation*}
	Recall that $q$ is from \cref{reverse_exp}.
\end{lemma}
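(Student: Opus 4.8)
The plan is to follow the proof of \cref{sig=p} almost verbatim, the only changes being that the power on the left is $2$ rather than $p$ and that an additional scaling factor $\la^{p-2}$ has to be absorbed into the final estimate. First I would apply the Gagliardo--Nirenberg inequality \cref{g_n} in the space variable, for a.e.\ fixed $t\in I_{2\rho}^\la(t_0)$, to $h:=u(\cdot,t)-\avgsutrho$ on $B_{2\rho}(x_0)$, with $\sig=2$, $r=2$, $q$ as in \cref{reverse_exp}, and a fixed $\vartheta\in(0,1)$ with $2\vartheta\le q$ (e.g.\ $\vartheta=\tfrac12$, which is admissible since $q>1$, as follows from \cref{reverse_exp}). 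The admissibility condition $-\tfrac n2\le\vartheta(1-\tfrac nq)-(1-\vartheta)\tfrac n2$ of \cref{g_n} reduces exactly to $q\ge\tfrac{2n}{n+2}$, which holds by \cref{reverse_exp}. Since $h$ has zero mean over $B_{2\rho}(x_0)$, Poincar\'e's inequality lets me bound the zeroth-order term on the right of \cref{g_n} by $\fint_{B_{2\rho}(x_0)}|\na u|^q\ dx$, which yields, for a.e.\ $t$,
\[
\fint_{B_{2\rho}(x_0)}\abs{\frac{u-\avgsutrho}{2\rho}}^2\ dx\apprle\lbr\fint_{B_{2\rho}(x_0)}|\na u|^q\ dx\rbr^{\frac{2\vartheta}{q}}\lbr\fint_{B_{2\rho}(x_0)}\abs{\frac{u-\avgsutrho}{2\rho}}^2\ dx\rbr^{1-\vartheta}.
\]

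Next I would integrate this over $t\in I_{2\rho}^\la(t_0)$, estimate the last factor by $\sup_{I_{2\rho}^\la(t_0)}\fint_{B_{2\rho}(x_0)}\abs{\tfrac{u-\avgsutrho}{2\rho}}^2\ dx$ and pull it out of the time integral, and then invoke \cref{cor_sup_2_bnd} with $s=2\rho$, $r=3\rho$ (exactly as in \cref{sig=p}) to bound this supremum by $\apprle\la^2$; this factor thus contributes $\apprle\la^{2(1-\vartheta)}$. Combining this with Jensen's inequality in the time variable — valid because $2\vartheta\le q$, so that $\fint_{I_{2\rho}^\la(t_0)}\lbr\fint_{B_{2\rho}(x_0)}|\na u|^q\ dx\rbr^{\frac{2\vartheta}{q}}\ dt\le\lbr\fiint_{Q_{2\rho}^\la(z_0)}|\na u|^q\ dz\rbr^{\frac{2\vartheta}{q}}$ — gives
\[
\fiint_{Q_{2\rho}^\la(z_0)}\abs{\frac{u-\avgsutrho}{2\rho}}^2\ dz\apprle\la^{2(1-\vartheta)}\lbr\fiint_{Q_{2\rho}^\la(z_0)}|\na u|^q\ dz\rbr^{\frac{2\vartheta}{q}}.
\]
Multiplying by $\la^{p-2}$ produces $\la^{p-2\vartheta}\lbr\fiint_{Q_{2\rho}^\la(z_0)}|\na u|^q\ dz\rbr^{\frac{2\vartheta}{q}}$ on the right, and finally, since $q<p$ — indeed $\de p<p$ as $\de<1$, $\tfrac{np}{n+2}<p$, and $\tfrac{2n}{n+2}<p$ by \descref{not3-1}{N5}, so that $0<2\vartheta\le q<p$ — Young's inequality with exponents $\tfrac{p}{p-2\vartheta}$ and $\tfrac{p}{2\vartheta}$ (both $>1$) converts this into $\ve\la^p+C(\ve)\lbr\fiint_{Q_{2\rho}^\la(z_0)}|\na u|^q\ dz\rbr^{\frac pq}$, which is the assertion.

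The individual estimates are routine; the point that genuinely needs attention is the compatibility of the three constraints on $\vartheta$ — admissibility in \cref{g_n} (valid for every $\vartheta\in(0,1)$ once $q\ge\tfrac{2n}{n+2}$), the bound $2\vartheta\le q$ for the Jensen step in time, and $2\vartheta<p$ for Young's inequality — which is guaranteed by the structural relations $1<q<p$ coming from \cref{reverse_exp} together with the standing assumption $p>\tfrac{2n}{n+2}$. As an aside, in the range $p\ge2$ the lemma also follows directly from \cref{sig=p} alone: H\"older's inequality gives $\fiint_{Q_{2\rho}^\la(z_0)}\abs{\tfrac{u-\avgsutrho}{2\rho}}^2\ dz\le\lbr\fiint_{Q_{2\rho}^\la(z_0)}\abs{\tfrac{u-\avgsutrho}{2\rho}}^p\ dz\rbr^{2/p}$, and inserting the bound from \cref{sig=p}, multiplying by $\la^{p-2}$, and applying Young's inequality once more finishes it; so the genuinely new content is confined to the range $\tfrac{2n}{n+2}<p<2$.
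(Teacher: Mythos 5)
Your proposal is correct and follows essentially the same route as the paper's proof: apply \cref{g_n} with $\sig=r=2$ and $\vartheta$ satisfying $2\vartheta\le q$, use Poincar\'e on the zeroth-order term, pull out the supremum and bound it via \cref{cor_sup_2_bnd}, and finish with Young's inequality; the constraint bookkeeping on $\vartheta$ matches the paper's choice $0<\vartheta<q/2$ (note $q\ge\de p>1$ by \cref{def_de}, so $\vartheta=\tfrac12$ is admissible). The closing observation that for $p\ge2$ the lemma also follows from \cref{sig=p} by H\"older and Young is a valid extra remark but not needed.
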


\begin{proof}
	We apply \cref{g_n} with $\sig=2$, $q$ as defined in \cref{reverse_exp}, $r=2$ and any $\vartheta\in(0,1)$ is admissible since 
	\begin{equation*}
	-\frac{n}{2}\le\vartheta(1-\frac{n}{q})-(1-\vartheta)\frac{n}{2}\quad \Longleftarrow\quad  \frac{2n}{n+2}\le q.
	\end{equation*}
	Let us take $\vartheta\in(0,1)$ such that	$0<\vartheta <\frac{q}{2}$, then we get
	\begin{equation*}
	\begin{array}{rcl}
	&&\fiint_{Q_{2\rho}^\la(z_0)}\abs{\frac{u-\avgsutrho}{2\rho}}^2\ dz\\
	&&\apprle \fint_{I_{2\rho}^\la(t_0)}\lbr\fint_{B_{2\rho}(x_0)}\abs{\frac{u-\avgsutrho}{2\rho}}^q+|\na u|^q\ dx\rbr^\frac{2\vartheta}{q}\lbr\fint_{B_{2\rho}(x_0)}\abs{\frac{u-\avgsutrho}{2\rho}}^2\ dx\rbr^{1-\vartheta}\ dt\\
	&&\overset{\redlabel{6.23a}{a}}{\le}\lbr\fiint_{Q_{2\rho}^\la(z_0)}|\na u|^q\ dz\rbr^\frac{2\vartheta}{q}\lbr\sup_{I_{2\rho}^\la(t_0)}\fint_{B_{2\rho}(x_0)}\abs{\frac{u-\avgsutrho}{2\rho}}^2\ dx\rbr^{1-\vartheta}\\
	&&\overset{\redlabel{6.23b}{b}}{\apprle}\la^{2(1-\vartheta)}\lbr\fiint_{Q_{2\rho}^\la(z_0)}|\na u|^q\ dz\rbr^\frac{2\vartheta}{q}.
	\end{array}
	\end{equation*}
	Here, to obtain \redref{6.23a}{a}, we used Poincar$\acute{\text{e}}$ inequality, and to \redref{6.23b}{b}, we used \cref{cor_sup_2_bnd} applied with $s= 2\rho$ and $r= 3\rho$. Finally, an application of Young's inequality gives the desired estimate.
\end{proof}

\begin{lemma}\label{sig=mfa}
	Assuming \cref{caccio} holds and with notation as in \cref{sup_2_bnd}, for any $\ve \in (0,1)$, the following holds:
	\begin{equation*}
		\fiint_{Q_{2\rho}^\la(z_0)}|f||u-\avgsutrho|\ dz\apprle_{(n,p,\La_0,\La_1,\de)}\ve \la^p+C(\ve)\lbr\fiint_{Q_{2\rho}^\la(z_0)}|\na u|^{q} \ dz\rbr^\frac{p}{q}.
	\end{equation*}
	Recall that $q$ is from \cref{reverse_exp}.
\end{lemma}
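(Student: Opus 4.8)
The plan is the natural one, essentially mirroring the estimate of the third term in the proof of \cref{sup_2_bnd}, but keeping the gradient on the right-hand side instead of invoking \cref{le_la} on it: apply H\"older's inequality in the space variable with the conjugate pair $(\mfa',\mfa)$, control the resulting gradient factor by a Sobolev--Poincar\'e inequality, and absorb the datum factor via \cref{le_la} together with Young's inequality.

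First I would note that, for every fixed $t\in I_{2\rho}^\la(t_0)$, H\"older's inequality (with the usual convention $\mfa'=1$, $\mfa=\infty$ when $p>n$) gives
\[
\fint_{B_{2\rho}(x_0)}|f||u-\avgsutrho|\ dx\le\lbr\fint_{B_{2\rho}(x_0)}(2\rho)^{\mfa'}|f|^{\mfa'}\ dx\rbr^{\frac1{\mfa'}}\lbr\fint_{B_{2\rho}(x_0)}\abs{\tfrac{u-\avgsutrho}{2\rho}}^{\mfa}\ dx\rbr^{\frac1{\mfa}}.
\]
Since $f$ is independent of time, averaging over $I_{2\rho}^\la(t_0)$ pulls the first factor out of the time average. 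For the second factor I would distinguish the two cases from \cref{mfa_high}: when $p\le n$, \cref{sob_poin} applied with the exponents $\de p$ and $(\de p)^*=\mfa$ — admissible because $1<\de p<n$ by \cref{reverse_exp} — bounds it by $\lbr\fint_{B_{2\rho}(x_0)}|\na u|^{\de p}\ dx\rbr^{1/(\de p)}$; when $p>n$, where $\mfa'=1$, I would repeat the argument in the proof of \cref{sobolev_bound}, extending $u-\avgsutrho$ by \cref{ext} and applying \cref{Morrey} followed by Poincar\'e's inequality, but with the Morrey exponent $q$ in place of $p$ (legitimate since $q\ge\de p>n$), to bound $\sup_{B_{2\rho}(x_0)}\tfrac{|u-\avgsutrho|}{2\rho}$ by $\lbr\fint_{B_{2\rho}(x_0)}|\na u|^{q}\ dx\rbr^{1/q}$. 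In either case, since $q\ge\de p$, Jensen's inequality in the space and then in the time variable yields
\[
\fint_{I_{2\rho}^\la(t_0)}\lbr\fint_{B_{2\rho}(x_0)}\abs{\tfrac{u-\avgsutrho}{2\rho}}^{\mfa}\ dx\rbr^{\frac1{\mfa}}\ dt\apprle\lbr\fiint_{Q_{2\rho}^\la(z_0)}|\na u|^{q}\ dz\rbr^{\frac1q}.
\]

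To conclude, I would enlarge the ball and use \cref{le_la} to get $\fint_{B_{2\rho}(x_0)}(2\rho)^{\mfa'}|f|^{\mfa'}\ dx\apprle\la^{p-\nonfe}$; since $\tfrac1\mfa+\tfrac1{\mfa'}=1$ one computes $\tfrac{p-\nonfe}{\mfa'}=p-1$, so that $\lbr\fint_{B_{2\rho}(x_0)}(2\rho)^{\mfa'}|f|^{\mfa'}\ dx\rbr^{1/\mfa'}\apprle\la^{p-1}$. Combining this with the previous display gives
\[
\fiint_{Q_{2\rho}^\la(z_0)}|f||u-\avgsutrho|\ dz\apprle\la^{p-1}\lbr\fiint_{Q_{2\rho}^\la(z_0)}|\na u|^{q}\ dz\rbr^{\frac1q},
\]
and an application of Young's inequality with exponents $\tfrac{p}{p-1}$ and $p$ converts the right-hand side into $\ve\la^p+C(\ve)\lbr\fiint_{Q_{2\rho}^\la(z_0)}|\na u|^{q}\ dz\rbr^{p/q}$, which is the claim.

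Every step is routine given the earlier lemmas; the only place that requires a moment's care is the case $p>n$, where the Morrey embedding must be used with exponent $q$ (not $p$) so that the gradient enters to precisely the power $q$ demanded by the statement, together with the elementary bookkeeping behind the identity $\tfrac{p-\nonfe}{\mfa'}=p-1$.
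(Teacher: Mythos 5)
Your proposal is correct and follows essentially the same route as the paper's proof: H\"older with the pair $(\mfa',\mfa)$, Sobolev--Poincar\'e for $p\le n$ and the extension/Morrey argument with exponent $q>n$ for $p>n$, the bound $\lbr\fint_{B_{2\rho}}(2\rho)^{\mfa'}|f|^{\mfa'}\,dx\rbr^{1/\mfa'}\apprle\la^{p-1}$ from \cref{le_la}, and Young's inequality. The only cosmetic difference is that you make the Jensen step from $\de p$ to $q$ and the exponent identity $\tfrac{p-\nonfe}{\mfa'}=p-1$ explicit, which the paper leaves implicit.
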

\begin{proof} From H\"older's inequality, we have
\begin{equation}\label{5.23}
		\begin{array}{rcl}
	    &&\fiint_{Q_{2\rho}^\la(z_0)}|f||u-\avgsutrho|\ dz\\
	     &&\le\lbr\fint_{B_{2\rho}(x_0)}(2\rho)^{\mfa'}|f|^{\mfa'}\ dx\rbr^\frac{1}{\mfa'}\lbr \fint_{I_{2\rho}^{\la}(t_0)}\lbr\fint_{B_{2\rho}(x_0)} \abs{\frac{u-\avgsutrho}{2\rho}}^{\mfa} \ dx\rbr^\frac{1}{\mfa}\ dt \rbr\\\
       	&&\overset{\cref{le_la}}{\le}\la^{p-1}\lbr \fint_{I_{2\rho}^{\la}(t_0)}\lbr\fint_{B_{2\rho}(x_0)} \abs{\frac{u-\avgsutrho}{2\rho}}^{\mfa} \ dx\rbr^\frac{1}{\mfa}\ dt \rbr.
		\end{array}
		\end{equation}
		In order to estimate the integral term appearing on the right-hand side of \cref{5.23}, we consider two subcases and proceed as follows.
	\begin{description}
		\item [Case $p \leq n$:] In this case,  we can directly apply \cref{sob_poin} to \cref{5.23} to get
		\begin{equation*}
	    \lbr \fint_{I_{2\rho}^{\la}(t_0)}\lbr\fint_{B_{2\rho}(x_0)} \abs{\frac{u-\avgsutrho}{2\rho}}^{\mfa} \ dx\rbr^\frac{1}{\mfa}\ dt \rbr
	     \apprle\lbr\fiint_{Q_{2\rho}^\la(z_0)}|\na u|^{q} \ dz\rbr^\frac{1}{q}.
		\end{equation*}
		
		\item [Case $p> n$:] We will further restrict $\de$ such that $\de > \frac{n}{p}$ which implies $q >n$. Similarly, calculation from \cref{sobolev_bound} gives
		\begin{equation*}
			\begin{array}{rcl}
			\fiint_{Q_{2\rho}^\la(z_0)}|f||u-\avgsutrho|\ dz
			&\le& \fint_{I_{2\rho}^{\la}(t_0)}\lbr\fint_{B_{2\rho}(x_0)}(2\rho)^{\mfa'}|f|^{\mfa'}\ dx\rbr^\frac{1}{\mfa'}\lbr \sup_{B_{2\rho}(x_0)}\abs{\frac{u-\avgsutrho}{2\rho}}\rbr \ dt\\
			&\overset{\redlabel{5.27a}{a}}{\apprle}&\fint_{I_{2\rho}^{\la}(t_0)}\lbr\fint_{B_{2\rho}(x_0)}(2\rho)^{\mfa'}|f|^{\mfa'}\ dx\rbr^\frac{1}{\mfa'} \lbr\fint_{B_{2\rho}(x_0)}|\na u|^{q} \ dx\rbr^\frac{1}{q}\ dt \\
			&\overset{\redlabel{5.27b}{b}}{\apprle}&\la^{p-1}\lbr\fiint_{Q_{2\rho}^\la(z_0)}|\na u|^{q} \ dz\rbr^\frac{1}{q},
			\end{array}
		\end{equation*}
		where to obtain \redref{5.27a}{a}, we proceeded analogous to \cref{infity_estimate} noting that $q >n$ by the restriction on $\de$ and to obtain \redref{5.27b}{b}, we made use of the fact that $f$ is independent of time along with \cref{le_la}. 
	\end{description}
Applying Young's inequality in both cases completes the  proof.
\end{proof}
\begin{proposition}\label{intr_reverse}
	Under the assumption \cref{le_la} and \cref{ge_la} with $\la\ge1$,  there holds
	\begin{equation*}
		\fiint_{Q_{\rho}^\la(z_0)}(|\na u|+1)^p\ dz \apprle_{(n,p,\La_0,\La_1,\de)}\lbr\fiint_{Q_{2\rho}^\la(z_0)}(|\na u|+1)^q\ dz\rbr^\frac{p}{q}+\la^{\nonfe}\fint_{B_{2\rho}(x_0)}(2\rho)^{\mfa'}|f|^{\mfa'}\ dx.
	\end{equation*}
\end{proposition}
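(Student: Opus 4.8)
The plan is to derive the reverse Hölder inequality by feeding the Caccioppoli estimate \cref{caccio} into the three intermediate bounds \cref{sig=p}, \cref{sig=2} and \cref{sig=mfa}, and then removing the intrinsic term $\ve\la^p$ produced by those lemmas by means of the lower bound \cref{ge_la}. Since both \cref{le_la} and \cref{ge_la} are in force, all three lemmas apply, and each of them controls exactly one of the three terms on the right-hand side of \cref{caccio}.

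First I would apply \cref{caccio} with $\rhoa=\rho$ and $\rhob=2\rho$, so that $\rhob-\rhoa=\rho$ and the cut-offs of \cref{cut_off_function1} satisfy $\eta\equiv1$ on $B_\rho(x_0)$, $\zeta\equiv1$ on $I_\rho^\la(t_0)$. Since $\eta^p\zeta^2\equiv1$ on $Q_\rho^\la(z_0)\subset Q_{2\rho}^\la(z_0)$ and $|Q_\rho^\la(z_0)|/|Q_{2\rho}^\la(z_0)|=2^{-(n+2)}$, the left-hand side of \cref{caccio} bounds $\fiint_{Q_\rho^\la(z_0)}|\na u|^p\ dz$ from below up to a dimensional constant, while on the right-hand side I would use $\tfrac1{\rhob-\rhoa}=\tfrac2{2\rho}$ and discard the factors $\eta^p\zeta\le1$. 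Then, using \cref{weight_lem} for the cut-off $\eta$ (whose hypothesis is the pointwise bound recorded right after \cref{cut_off_function1}), together with the slice-wise estimate $|\avgsueta[2\rho]-\avgs{u}{B_{2\rho}(x_0)}(t)|\apprle\fint_{B_{2\rho}(x_0)}|u-\avgs{u}{B_{2\rho}(x_0)}(t)|\ dx$, I would replace the weighted averages $\avgsueta[2\rho]$ by the ball averages $\avgs{u}{B_{2\rho}(x_0)}(t)$ in all three terms. After this replacement the three right-hand terms are controlled, respectively by \cref{sig=p}, \cref{sig=2} and \cref{sig=mfa}, each by $\ve\la^p+C(\ve)\lbr\fiint_{Q_{2\rho}^\la(z_0)}|\na u|^q\ dz\rbr^{p/q}$, so that $\fiint_{Q_\rho^\la(z_0)}|\na u|^p\ dz\apprle_{(n,p,\La_0,\La_1,\de)}\ve\la^p+C(\ve)\lbr\fiint_{Q_{2\rho}^\la(z_0)}|\na u|^q\ dz\rbr^{p/q}$.

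It remains to absorb $\ve\la^p$. By \cref{ge_la}, $\la^p\le\fiint_{Q_\rho^\la(z_0)}(|\na u|+1)^p\ dz+\la^{\nonfe}\fint_{B_\rho(x_0)}\rho^{\mfa'}|f|^{\mfa'}\ dx$ and $(|\na u|+1)^p\le2^{p-1}(|\na u|^p+1)$, so for $\ve=\ve(n,p,\La_0,\La_1,\de)$ small enough the gradient part of $\ve\la^p$ reabsorbs into the left-hand side. Adding $1$ to both sides, enlarging the $|f|$-integral from $B_\rho(x_0)$ to $B_{2\rho}(x_0)$ (legitimate since $\la\ge1$ and $\nonfe\ge0$, because $\mfa\ge p$ under the restrictions of \cref{reverse_exp}), replacing $|\na u|^q$ by $(|\na u|+1)^q$, and absorbing the remaining constant into $\lbr\fiint_{Q_{2\rho}^\la(z_0)}(|\na u|+1)^q\ dz\rbr^{p/q}\ge1$ then gives the asserted inequality. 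The only slightly delicate steps are the passage from weighted to unweighted averages in the $|f|$-term — carried out exactly as in the proof of \cref{sig=mfa} through the generalized Hölder inequality in the pair $(\mfa,\mfa')$, and through the Morrey-type supremum bound when $p>n$ (so $\mfa=\infty$) — and keeping the radii consistent with the hypotheses of \cref{sup_2_bnd} and \cref{cor_sup_2_bnd} invoked inside the three lemmas; neither is a real obstacle, so the argument amounts to assembling ingredients already established.
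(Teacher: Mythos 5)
Your proposal is correct and follows essentially the same route as the paper: apply \cref{caccio} with $\rhoa=\rho$, $\rhob=2\rho$, pass to ball averages via \cref{weight_lem}, control the three resulting terms with \cref{sig=p}, \cref{sig=2} and \cref{sig=mfa}, and absorb the $\ve\la^p$ term using \cref{ge_la}. The only cosmetic difference is that you split $\ve\la^p$ and push its $f$-part directly onto the right-hand side, whereas the paper first adds the $f$-term to both sides and absorbs $\ve\la^p$ against the combined left-hand side; these manipulations are equivalent.
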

\begin{proof}
	Let us apply \cref{caccio} with $\rhoa=\rho$ and $\rhob=2\rho$, then along with \cref{weight_lem}, we obtain 
	\begin{equation*}
		\begin{array}{rcl}
		\fiint_{Q_{\rho}^\la (z_0)}|\na u|^p\ dz
		&\apprle_{(n,p,\La_0,\La_1)}& \fiint_{Q_{2\rho}^\la(z_0)}\abs{\frac{u-\avgsutrho}{2\rho}}^p\ dz+\la^{p-2}\fiint_{Q_{2\rho}^\la(z_0)}\abs{\frac{u-\avgsutrho}{2\rho}}^2\ dz\\
		&&+\fiint_{Q_{2\rho}^\la(z_0)}|f||u-\avgsutrho|\ dz.
		\end{array}
	\end{equation*}
	We estimate the right-hand side of the above expression using  \cref{sig=p}, \cref{sig=2} and \cref{sig=mfa} to get
	\begin{equation*}
		\fiint_{Q_{\rho}^\la(z_0)}|\na u|^p\ dz\apprle_{(n,p,\La_0,\La_1,\de)}\ve\la^p+C(\ve)\lbr\fiint_{Q_{2\rho}^\la(z_0)}|\na u|^{q} \ dz\rbr^\frac{p}{q},
	\end{equation*}
	from which we get
	\begin{equation*}
	\begin{array}{rcl}
	\fiint_{Q_{\rho}^\la(z_0)}(|\na u|+1)^p\ dz+\la^{\nonfe}\fint_{B_{\rho}(x_0)}\rho^{\mfa'}|f|^{\mfa'}\ dx
	&\apprle&\ve\la^p+C(\ve)\lbr\fiint_{Q_{2\rho}^\la(z_0)}(|\na u|+1)^{q} \ dz\rbr^\frac{p}{q}\\
	&&\qquad+\la^{\nonfe}\fint_{B_{2\rho}(x_0)}(2\rho)^{\mfa'}|f|^{\mfa'}\ dx.
	\end{array}
	\end{equation*}
	Now, taking $\ve=\ve(n,p,\La_0,\La_1,\de)\in(0,1)$ small enough and making use of \cref{ge_la}, we obtain the desired estimate.
\end{proof}

\subsection{Fixing some Constants}
\label{fixing_some_constants}

Let us fix some constants that will be needed in the covering argument to prove higher integrability. 
\begin{definition}\label{definition_constants}
    Let $r>0$ be fixed with $Q_{2r}(z_0) \subset \Om_T$ and let $r \leq r_1 < r_2 \leq 2r$ be any two radii, then we define the following quantities:
    \begin{description}
        \descitem{C1}{C1} For any $ \la >0$, we denote $E_{\la}:=\{z\in Q_{r_1}(z_0):|\na u|+1>\la\}$.
        \descitem{C2}{C2} We define the two constants 
                            \begin{gather*}
			d:=
			\begin{cases}
			\frac{p}{2}&\text{ if }p\ge2,\\
			\frac{2p}{p(n+2)-2n}&\text{ if }p<2,
			\end{cases}
			\txt{and}
			\alpha:=
			\begin{cases}
			\lbr d-\frac{d}{p}\nonfe\rbr^{-1}&\text{ if }p\ge2,\\
			\lbr1-\frac{d}{p}\mfa'\lbr\frac{p}{2}-\frac{p}{\mfa}\rbr\rbr^{-1}&\text{ if }p<2.
			\end{cases}
		\end{gather*}
        \descitem{C3}{C3} Fix $\la_0$ such that  $\la_0^\frac{p}{d}:=\fiint_{Q_{2r}(z_0)}\lbr|\na u|+1\rbr^p\ dz+\lbr\fint_{B_{2r}(x_0)}(2r)^{\mfa'}|f|^{\mfa'}\ dx\rbr^\alpha$.
        \descitem{C4}{C4} We fix a constant $\mathbb{B}$ satisfying $\mathbb{B}:=\lbr\frac{20r}{r_2-r_1}\rbr^{(n+2)\frac{d}{p}}+\lbr\frac{20r}{r_2-r_1}\rbr^{n\frac{d\alpha}{p}}\ge1$.
        \descitem{C5}{C5} For $(\mfx,\mft)=\mathfrak{z}\in E_\la$ and $\rho\in(0,r_2-r_1)$, we define
		\begin{equation*}
		G(Q_\rho^\la(\mathfrak{z})):=\fiint_{Q_\rho^\la(\mathfrak{z})}\lbr|\na u|+1\rbr^p\ dz+\la^{\nonfe}\fint_{B_\rho^\la(\mathfrak{x})}\rho^{\mfa'}|f|^{\mfa'}\ dx.
		\end{equation*}
        \descitem{C6}{C6} We define 
		$\nu:=\mfa'\lbr\frac{\min\{2,p\}}{2}-\frac{p}{\mfa}\rbr$ and $\tf(x):=\eta^{-\frac{\mfa'}{n-\mfa'}}\la_0^{\nu}(2r)^{\mfa'}|f(x)|^{\mfa'}$.
    \end{description}

\end{definition}

\begin{definition}\label{lv}
	Let $\eta>0$ and $\la>1$. For any $\rho\in(0,2r)$, we define the following upper-level sets:
	\begin{itemize}
		\item $\Phi_{\eta\la}^\rho:=\{z\in Q_{\rho}(z_0):(|\na u(z)|+1)^p>\eta\la^p\}$,
		\item $\Sig_{\eta\la}^{\rho}:=\{z\in Q_{\rho
		}(z_0): \eta^{-\frac{\mfa'}{n-\mfa'}}\la_0^{\nu}(2r)^{\mfa'}|f(x)|^{\mfa'}>\eta\la^p\}=\{z\in Q_{\rho
	}(z_0):|\tf|>\eta\la^p\}$,
	\end{itemize}
	 where $\tf$ and $\nu$ are as defined in \descref{C6}{C6}.
	
\end{definition}

Before we end this subsection, we make an important remark:
\begin{remark}\label{singular_cylinder}
    In the rest of the proof of the higher integrability, we will consider the following cylinders:
    \begin{description}
        \item[Case $p \geq 2$:] The cylinder will be of the form $Q_r^{\la} := Q_{r,\la^{2-p}r^2}$.
        \item[Case $p \leq 2$:] The cylinder will be of the form $Q_r^{\la} := Q_{\la^{\frac{p-2}{2}}r,r^2}$. 
    \end{description}
    By abuse of notation, we shall use the same notation $Q_r^{\la}$ to denote these two different cylinders and their notation will be clear from the context.
\end{remark}

\subsection{Covering argument in the case \texorpdfstring{$p \geq 2$}.}
\label{notation_covering_argument}

\begin{lemma}
 \label{stopping}
    With the constants fixed as in \cref{fixing_some_constants}, let $\la > 2^{\max\left\{\tfrac{d}{p}, \tfrac{\alpha d}{p}\right\}} \mathbb{B} \la_0$ and $\mfz \in E_{\la}$, then there exists $\rhoz\in\lbr0,\tfrac{r_2-r_1}{10}\rbr$ such that the following holds
    \begin{equation*}
	G(Q_{\rhoz}^\la(\mathfrak{z}))=\la^p\txt{and} G(Q_{\rho}^\la(\mathfrak{z}))\le\la^p\ \text{ for all }\ \ \rho\in(\rhoz,r_2-r_1),
	\end{equation*}
	where $G$ is defined in \descref{C5}{C5}.
\end{lemma}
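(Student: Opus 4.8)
The plan is the usual stopping-time argument. First I would record two soft facts. (i) For $\mfz=(\mfx,\mft)\in E_\la\subset Q_{r_1}(z_0)$ and any $\rho\in(0,r_2-r_1)$ we have $Q_\rho^\la(\mfz)\subset Q_{2r}(z_0)$: for the spatial ball, $B_\rho(\mfx)\subset B_{r_1+\rho}(x_0)\subset B_{r_2}(x_0)$; for the time interval, since $\la\ge1$ and $p\ge2$ its half-length is $\la^{2-p}\rho^2\le\rho^2$, whence $|\mft-t_0|+\rho^2<r_1^2+(r_2-r_1)^2\le r_2^2\le(2r)^2$. (ii) The map $\rho\mapsto G(Q_\rho^\la(\mfz))$ is continuous on $(0,r_2-r_1)$: it is assembled from $\rho\mapsto\iint_{Q_\rho^\la(\mfz)}(|\na u|+1)^p\,dz$ and $\rho\mapsto\int_{B_\rho(\mfx)}|f|^{\mfa'}\,dx$, both continuous by absolute continuity of the integral, divided by the continuous strictly positive weights $|Q_\rho^\la(\mfz)|=c\la^{2-p}\rho^{n+2}$ and $|B_\rho(\mfx)|=c\rho^n$ (for $p\ge2$ the spatial slice $B_\rho^\la(\mfx)$ of $Q_\rho^\la(\mfz)$ is just $B_\rho(\mfx)$), and multiplied by the continuous factor $\rho^{\mfa'}$.

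Next I would inspect the two ends of the interval. As $\rho\to0^+$: since the covering argument that invokes this lemma only needs a.e.\ $\mfz\in E_\la$, I may assume $\mfz$ is a Lebesgue point of $(|\na u|+1)^p$; then the first term of $G$ tends to $(|\na u(\mfz)|+1)^p$, while the second term $\la^{\nonfe}\rho^{\mfa'}\fint_{B_\rho(\mfx)}|f|^{\mfa'}\,dx$ tends to $0$ because of the vanishing prefactor $\rho^{\mfa'}$, so $G(Q_\rho^\la(\mfz))\to(|\na u(\mfz)|+1)^p>\la^p$ and hence $G(Q_\rho^\la(\mfz))>\la^p$ for all small $\rho$. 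For $\rho\in[\tfrac{r_2-r_1}{10},r_2-r_1)$ I would instead show $G(Q_\rho^\la(\mfz))<\la^p$: enlarging the integration domain to $Q_{2r}(z_0)$ and using the exact volume ratios gives
\begin{gather*}
\fiint_{Q_\rho^\la(\mfz)}(|\na u|+1)^p\,dz\le\la^{p-2}\Big(\tfrac{2r}{\rho}\Big)^{n+2}\fiint_{Q_{2r}(z_0)}(|\na u|+1)^p\,dz,\\
\la^{\nonfe}\rho^{\mfa'}\fint_{B_\rho(\mfx)}|f|^{\mfa'}\,dx\le\la^{\nonfe}\Big(\tfrac{2r}{\rho}\Big)^{n-\mfa'}(2r)^{\mfa'}\fint_{B_{2r}(x_0)}|f|^{\mfa'}\,dx.
\end{gather*}
Since $\tfrac{2r}{\rho}\le\tfrac{20r}{r_2-r_1}$ in this range (and $\tfrac{20r}{r_2-r_1}>1$, so the power $n-\mfa'$ is harmless whatever its sign), I would insert the two bounds from \descref{C3}{C3}, namely $\fiint_{Q_{2r}(z_0)}(|\na u|+1)^p\,dz\le\la_0^{p/d}$ and $(2r)^{\mfa'}\fint_{B_{2r}(x_0)}|f|^{\mfa'}\,dx\le\la_0^{p/(d\alpha)}$, together with the definition of $\mathbb{B}$ in \descref{C4}{C4} (which gives $(\tfrac{20r}{r_2-r_1})^{n+2}\le\mathbb{B}^{p/d}$ and $(\tfrac{20r}{r_2-r_1})^{n}\le\mathbb{B}^{p/(d\alpha)}$), obtaining the right-hand sides $\le\la^{p-2}(\mathbb{B}\la_0)^{p/d}$ and $\le\la^{\nonfe}(\mathbb{B}\la_0)^{p/(d\alpha)}$. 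The decisive point is then the algebraic identity $p-\nonfe=\tfrac{p}{d\alpha}$ (immediate from the definition of $\alpha$ in \descref{C2}{C2} for $p\ge2$, where also $p/d=2$): combined with the hypothesis $\la>2^{\max\{d/p,\,\alpha d/p\}}\mathbb{B}\la_0$ it forces $(\mathbb{B}\la_0/\la)^{p/d}<\tfrac12$ and $(\mathbb{B}\la_0/\la)^{p/(d\alpha)}<\tfrac12$, so each term of $G$ is $<\tfrac12\la^p$ and $G(Q_\rho^\la(\mfz))<\la^p$.

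Finally I would set $\rhoz:=\sup\{\rho\in(0,r_2-r_1):G(Q_\rho^\la(\mfz))\ge\la^p\}$: by the behaviour near $0$ this set is nonempty, and by the estimate on $[\tfrac{r_2-r_1}{10},r_2-r_1)$ it is disjoint from that subinterval, so $\rhoz\in(0,\tfrac{r_2-r_1}{10})$; continuity forces $G(Q_{\rhoz}^\la(\mfz))=\la^p$, and maximality of $\rhoz$ gives $G(Q_\rho^\la(\mfz))<\la^p$ for every $\rho\in(\rhoz,r_2-r_1)$, which is exactly the assertion. The step I expect to be the main obstacle is the constant bookkeeping just sketched — lining up the two scaling exponents hidden in $\mathbb{B}$, dealing with the $\rho$-power of exponent $n-\mfa'$, and verifying $p-\nonfe=\tfrac{p}{d\alpha}$ — so that the hypothesised lower bound on $\la$ is exactly strong enough to beat both pieces of $G$ by $\tfrac12\la^p$ apiece. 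Everything else (continuity, the Lebesgue-point reduction, the intermediate-value conclusion) is routine.
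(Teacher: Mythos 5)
Your proposal is correct and follows essentially the same route as the paper: Lebesgue differentiation to force $G>\la^p$ for small $\rho$, the volume-ratio estimate combined with \descref{C3}{C3}, \descref{C4}{C4} and the identity $p-\nonfe=\tfrac{p}{d\al}$ to force $G<\la^p$ on $[\tfrac{r_2-r_1}{10},r_2-r_1)$, and a continuity/stopping-time argument to locate $\rhoz$. The only differences are that you spell out the inclusion $Q_\rho^\la(\mfz)\subset Q_{2r}(z_0)$ and the continuity of $\rho\mapsto G(Q_\rho^\la(\mfz))$, which the paper leaves implicit; your aside that the $f$-term of $G$ tends to $0$ as $\rho\to0$ is unnecessary (and not needed for the conclusion, since that term is nonnegative), but harmless.
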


\begin{proof}
From the Lebesgue differentiation theorem, we see that 
\[
    \lim_{\rho \rightarrow 0} G(Q_\rho^\la(\mathfrak{z})) \geq (|\nabla u(\mfz)|+ 1)^p \overset{\descref{C1}{C1}}{>} \la^p.
\]

	From the observation $\frac{p}{d \alpha} > 0$, we see that for any $\frac{r_2-r_1}{10}\leq \rho<r_2-r_1$, it follows that $Q_\rho^\la(\mathfrak{z})\subset Q_{2r}(z_0)$ and 
		\begin{equation*}
			\begin{array}{rcl}
			G(Q_\rho^\la(\mathfrak{z}))
			&\le&\frac{|Q_{2r}(z_0)|}{|Q_{\rho}^\la(z_0)|}\fiint_{Q_{2r}(z_0)}\lbr|\na u|+1\rbr^p\ dz+\frac{|B_{2r}(x_0)|}{|B_{\rho}(x_0)|}\la^{\nonfe}\fint_{B_{2r}(x_0)}(2r)^{\mfa'}|f|^{\mfa'}\ dx\\
			&\overset{\descref{C3}{C3}}{\le}&\lbr\frac{20r}{r_2-r_1}\rbr^{n+2}\la^{p-2}\la_0^{\frac{p}{d}}+\lbr\frac{20r}{r_2-r_1}\rbr^{n}\la^{\nonfe}\la_0^{\frac{p}{d}\frac{1}{\alpha}}\\
			&\overset{\descref{C2}{C2},\descref{C4}{C4}}{\le}&\frac{1}{2}\la^p+\frac{1}{2}\la^p=\la^p.
			\end{array}
		\end{equation*}		
		
		Let us define $\rhoz\in\lbr0,\frac{r_2-r_1}{10}\rbr$ to be the largest value such that $G(Q_{\rhoz}^\la(\mathfrak{z}))=\la^p$, this proves the lemma.
\end{proof}

\begin{lemma}\label{lemma5.17}
There exists $\eta \in (0,1)$ small such that for any $\la>2^{\max\left\{\tfrac{d}{p}, \tfrac{\alpha d}{p}\right\}}\mathbb{B}\la_0$ and $\mfz\in E_\la$, the following holds:
     \begin{equation}\label{step1_est}
     	\iint_{Q_{10\rhoz}^\la(\mathfrak{z})}(|\na u|+1)^p\ dz \apprle_{(n,p,\La_0,\La_1,\de)} \iint_{Q_{2\rhoz}^\la(\mfz)\cap\Phi_{\eta\la}^{r_2}}\la^{p-q}(|\na u|+1)^q\ dz+\iint_{Q_{2\rhoz}^\la(\mfz)\cap \Sig_{\eta\la}^{r_2}}|\tf(x)|\ dz.
     \end{equation}    
\end{lemma}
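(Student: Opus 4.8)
The plan is to combine the intrinsic reverse H\"older inequality from \cref{intr_reverse} with the stopping time construction of \cref{stopping}. Fix $\la > 2^{\max\{d/p,\alpha d/p\}}\mathbb{B}\la_0$ and $\mfz \in E_\la$, and let $\rhoz$ be the stopping radius from \cref{stopping}, so that $G(Q_{\rhoz}^\la(\mfz)) = \la^p$ while $G(Q_\rho^\la(\mfz)) \le \la^p$ for all $\rho \in (\rhoz, r_2-r_1)$. The first step is to observe that on the cylinder $Q_{8\rhoz}^\la(\mfz)$ the pair of intrinsic bounds \cref{ge_la} and \cref{le_la} are satisfied with $\rho$ replaced by $\rhoz$: the lower bound \cref{ge_la} is exactly $G(Q_{\rhoz}^\la(\mfz)) = \la^p$, and the upper bound \cref{le_la} follows from $G(Q_{4\rhoz}^\la(\mfz)) \le \la^p$ since $4\rhoz < r_2-r_1$. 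Hence \cref{intr_reverse} applies on $Q_{\rhoz}^\la(\mfz)$ (after the usual rescaling/translation of the statement to be centered at $\mfz$), giving
\[
\fiint_{Q_{\rhoz}^\la(\mfz)}(|\na u|+1)^p\ dz \apprle \lbr\fiint_{Q_{2\rhoz}^\la(\mfz)}(|\na u|+1)^q\ dz\rbr^{\frac{p}{q}} + \la^{\nonfe}\fint_{B_{2\rhoz}(\mfx)}(2\rhoz)^{\mfa'}|f|^{\mfa'}\ dx.
\]

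Next I would upgrade the left-hand integral from $Q_{\rhoz}^\la$ to $Q_{10\rhoz}^\la$ at the cost of a harmless constant, using the stopping property $G(Q_{10\rhoz}^\la(\mfz)) \le \la^p$ which bounds $\iint_{Q_{10\rhoz}^\la(\mfz)}(|\na u|+1)^p\ dz \le \la^p |Q_{10\rhoz}^\la(\mfz)| \apprle \la^p|Q_{\rhoz}^\la|$, and comparing with the reverse H\"older bound (both sides are comparable to $\la^p|Q_{\rhoz}^\la|$ up to the stated constants). Then the two terms on the right must be split over the relevant super-level sets. For the gradient term: write $\iint_{Q_{2\rhoz}^\la(\mfz)}(|\na u|+1)^q\ dz = \iint_{Q_{2\rhoz}^\la(\mfz)\cap\Phi_{\eta\la}^{r_2}}(\cdots) + \iint_{Q_{2\rhoz}^\la(\mfz)\setminus\Phi_{\eta\la}^{r_2}}(\cdots)$. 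On the complement, $(|\na u|+1)^p \le \eta\la^p$, so that piece is bounded by $(\eta\la^p)^{p/q}|Q_{2\rhoz}^\la|^{p/q}$; after raising to the power $p/q$ in the reverse H\"older estimate and absorbing, choosing $\eta$ small forces this term to the left-hand side (this is where the $\eta$ in the statement comes from). On $\Phi_{\eta\la}^{r_2}$ one has $\la^{p-q} \ge$ a constant times $(|\na u|+1)^{p-q}$? — no: rather one uses $\lbr\fiint (|\na u|+1)^q\rbr^{p/q} \le \la^{p-q}\fiint(|\na u|+1)^q$ type manipulation after noting $(|\na u|+1)^q \le \la^q$ is false, so instead: since $G(Q_{2\rhoz}^\la)\le \la^p$ one has $\fiint_{Q_{2\rhoz}^\la}(|\na u|+1)^q \le (\fiint (|\na u|+1)^p)^{q/p} \le \la^q$ by H\"older; raising the reverse estimate to the $p/q$ is then linearized via $A^{p/q} = A\cdot A^{(p-q)/q} \le A\la^{p-q}$ with $A = \fiint_{Q_{2\rhoz}^\la}(|\na u|+1)^q$, and the split of $A$ over $\Phi_{\eta\la}^{r_2}$ and its complement produces exactly the first term on the right of \cref{step1_est}. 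Similarly the datum term $\la^{\nonfe}\fint_{B_{2\rhoz}(\mfx)}(2\rhoz)^{\mfa'}|f|^{\mfa'}\ dx$ is rewritten, using the definitions of $\nu$, $\tf$ in \descref{C6}{C6} and the bound $\la \apprle \la_0^{?}$... actually $\la > \la_0$ here, so one uses $\la^{\nonfe} \apprle \la^{p}\la^{-p+\nonfe}$ and the relation between $\nonfe$, $\nu$ and $\alpha$ from \descref{C2}{C2}, \descref{C6}{C6} to convert $\la^{\nonfe}(2\rhoz)^{\mfa'}|f|^{\mfa'}$ into $\tf(x)$ up to constants, then split $\int_{B_{2\rhoz}}|\tf|$ over $\{|\tf| > \eta\la^p\}$ and its complement, with the complement again absorbed by smallness of $\eta$.

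The main obstacle I expect is the bookkeeping in the two super-level-set splittings, specifically matching the powers: one must verify that after raising the reverse-H\"older inequality to the power $p/q$, the factor $\la^{p-q}$ emerges with the \emph{correct} exponent and that the datum term genuinely collapses to $\iint |\tf|\ dz$ (not some fractional power of it) — this hinges on the precise choice of $\alpha$ in \descref{C2}{C2} being designed exactly so that the $\la$-powers $\la^{\nonfe}$ and $\la^{p}\la_0^{\text{(something)}}$ balance, and on $\nu$ in \descref{C6}{C6} being the right linearizing exponent. A secondary technical point is the rescaling of \cref{intr_reverse}: that proposition is stated for cylinders centered at $z_0$ with the standing bounds \cref{ge_la}--\cref{le_la}, so I would either re-derive it verbatim centered at the arbitrary stopping point $\mfz$ (the proof is translation/scaling invariant) or remark that all the lemmas it rests on — \cref{caccio}, \cref{sig=p}, \cref{sig=2}, \cref{sig=mfa} — hold at any interior point. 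Everything else (the constants depending only on $n,p,\La_0,\La_1,\de$, the $\eta$ absorption, the final $\apprle$) is routine once these two points are pinned down.
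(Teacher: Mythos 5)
Your overall strategy coincides with the paper's: apply \cref{intr_reverse} on the stopping cylinder produced by \cref{stopping}, upgrade the left-hand side from $Q_{\rhoz}^\la(\mfz)$ to $Q_{10\rhoz}^\la(\mfz)$ via $G(Q_{10\rhoz}^\la(\mfz))\le \la^p = G(Q_{\rhoz}^\la(\mfz))$, split the gradient term over $\Phi_{\eta\la}^{r_2}$ and its complement, linearize the exponent $p/q$ by writing $A^{p/q}\le A\,\la^{p-q}$ with the help of \cref{le_la}, and absorb the $\eta\la^p$ contributions by choosing $\eta$ small. All of that is correct and is exactly what the paper does.

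The one genuine gap is in the datum term $II=\la^{\nonfe}\fint_{B_{2\rhoz}}(2\rhoz)^{\mfa'}|f|^{\mfa'}\,dx$. Your proposed conversion of $II$ into $\fiint_{Q_{2\rhoz}^\la(\mfz)}|\tf|\,dz$ rests on ``$\la>\la_0$ plus the exponent relations,'' but this is not sufficient: by \descref{C6}{C6} the function $\tf$ carries the weight $(2r)^{\mfa'}$ and the factor $\la_0^{\nu}$ (with $\nu=\nonfe$ when $p\ge2$), and when $\nonfe>0$ (which occurs, e.g., for $\de$ close to $1$, since then $\mfa>p$) one has $\la^{\nonfe}>\la_0^{\nonfe}$, so the trivial bound $(2\rhoz)^{\mfa'}\le(2r)^{\mfa'}$ produces an inequality pointing the wrong way. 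The missing idea is the dichotomy used in the paper: either $II\le\eta\la^p$, in which case it is absorbed outright; or $II\ge\eta\la^p$, and then \descref{C3}{C3} forces the stopping radius to satisfy $\rhoz/r\le\eta^{-1/(n-\mfa')}\lbr\la_0/\la\rbr^{(p-\nonfe)/(n-\mfa')}$. Writing $(2\rhoz)^{\mfa'}=(\rhoz/r)^{\mfa'}(2r)^{\mfa'}$, the factor $(\rhoz/r)^{\mfa'}$ supplies precisely the negative power of $\la/\la_0$ needed to trade $\la^{\nonfe}$ for $\la_0^{\nonfe}$ (the sign condition $\tfrac{\mfa'}{n-\mfa'}(p-\nonfe)-\nonfe\ge0$ is what makes this balance), and it is also the origin of the prefactor $\eta^{-\mfa'/(n-\mfa')}$ in the definition of $\tf$. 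You correctly flagged this balancing as the main obstacle, but the mechanism you sketch does not close it; without the dichotomy and the resulting smallness of $\rhoz/r$, the step fails.
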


\begin{proof}
    From \cref{stopping}, we see that  assumptions \cref{le_la} and \cref{ge_la} are satisfied since $10\rhoz < r_2-r_1$. Thus \cref{intr_reverse} gives
	\begin{equation}\label{pre_est1}
	\fiint_{Q_{\rhoz}^\la(\mathfrak{z})}(|\na u|+1)^p\ dz \apprle\underbrace{\lbr\fiint_{Q_{2\rhoz}^\la(\mathfrak{z})}(|\na u|+1)^q\ dz\rbr^\frac{p}{q}}_{=: I}+\underbrace{\la^{\nonfe}\fint_{B_{2\rhoz}^\la(\mathfrak{z})}(2\rhoz)^{\mfa'}|f|^{\mfa'}\ dx}_{=:II}.
	\end{equation}
In order to estimate \cref{pre_est1}, we will first choose some $\eta \in (0,1)$ to be eventually fixed depending on the data and estimate each of the terms as follows:
	\begin{description}
		\item [Estimate of $I$:] Let $\eta\in(0,1)$ will be chosen later. Then since $Q_{\rhoz}^\la(\mfz)\subset Q_{r_2}(z_0)$ holds, we see that
		\begin{equation}\label{est_I_5.37}
			\begin{array}{rcl}
			I
			&\overset{\text{\cref{lv}}}{\le}&\eta\la^p+\lbr\frac{1}{|Q_{2\rhoz}^\la(\mfz)|}\iint_{Q_{2\rhoz}^\la(\mfz)\cap \Phi_{\eta\la}^{r_2}}(|\na u|+1)^q\ dz\rbr^\frac{p}{q}\\
			&\le&\eta\la^p+\lbr\frac{1}{|Q_{2\rhoz}^\la(\mfz)|}\iint_{Q_{2\rhoz}^\la(\mfz)\cap\Phi_{\eta\la}^{r_2}}(|\na u|+1)^q\ dz\rbr\lbr\fiint_{Q_{2\rhoz}^\la(\mfz)}(|\na u|+1)^q\ dz\rbr^{\frac{p}{q}-1}\\
			&\overset{\cref{le_la}}{\le}&\eta\la^p+\frac{1}{|Q_{2\rhoz}^\la(\mfz)|}\iint_{Q_{2\rhoz}^\la(\mfz)\cap\Phi_{\eta\la}^{r_2}}\la^{p-q}(|\na u|+1)^q\ dz.
			\end{array}
		\end{equation}

		\item [Estimate of $II$:] To estimate this term, we shall split the proof into two cases as follows (note that $\eta$ is yet to be determined):
\begin{description}[leftmargin=0pt]
\item{{\bf Case a}:} In this case, let us assume that the following holds:
\begin{gather}\label{alt}
			\eta\la^p\le\la^{\nonfe}\fint_{B_{2\rhoz}^\la(\mathfrak{z})}(2\rhoz)^{\mfa'}|f|^{\mfa'}\ dx.
		\end{gather}
		Recall that $f = f(x)$, thus we get
		\begin{equation}\label{5.35}
				\eta\la^p
				\le\la^{\nonfe}\fint_{B_{2\rhoz}(\mathfrak{x})}(2\rhoz)^{\mfa'}|f|^{\mfa'}\ dx
				\overset{\text{\descref{C3}{C3}}}{\le}\la^{\nonfe}\lbr\frac{r}{\rhoz}\rbr^{n-\mfa'}\la_0^{\frac{p}{d\alpha}}.
			\end{equation}
			It is easy to see that $n > \mfa'$, since, in the case $p>n$, we trivially have $n > \mfa'=1$ (see \cref{mfa_high}) and in the case $p\leq n$, we have $n > \mfa' \Leftrightarrow \mfa > n' \Leftrightarrow \de > \frac{1}{p}$ which holds from \cref{mfa_high}.
			
			From \cref{5.35} and the fact that $\frac{p}{d\al} =p - \mfa'\lbr 1-\frac{p}{\mfa}\rbr =\mfa'(p-1)> 0$ along with \descref{C2}{C2}, we have 
			\begin{equation}\label{alt_est1}
				\frac{\rhoz}{r}\le \eta^{-\frac{1}{n-\mfa'}}\lbr \frac{\la_0}{\la}\rbr^{\frac{1}{n-\mfa'}\lbr p-\nonfe\rbr}.
			\end{equation}
		Therefore \cref{alt_est1} implies that 
		\begin{equation}\label{alt_est2}
			\begin{array}{rcl}
				\la^{\nonfe}\fint_{B_{2\rhoz}^\la(\mfz)}(2\rhoz)^{\mfa'}|f|^{\mfa'}\ dx
				&=&\lbr\frac{\rhoz}{r}\rbr^{\mfa'}\la^{\nonfe}\fint_{B_{2\rhoz}^\la(\mfz)}(2r)^{\mfa'}|f|^{\mfa'}\ dx\\
				&\le& \lbr \frac{\la_0}{\la} \rbr^{\frac{\mfa'}{n-\mfa'}\lbr p-\nonfe\rbr-\nonfe}\frac{\la_0^{\nonfe}}{\eta^{\frac{\mfa'}{n-\mfa'}}}\fint_{B_{2\rhoz}^\la(\mfz)}(2r)^{\mfa'}|f|^{\mfa'}\ dx\\
				& \overset{\redlabel{5.37a}{a}}{\leq} & \eta^{-\frac{\mfa'}{n-\mfa'}}\la_0^{\nonfe}\fint_{B_{2\rhoz}^\la(\mfz)}(2r)^{\mfa'}|f|^{\mfa'}\ dx\overset{\text{\descref{C6}{C6}}}{=}\fiint_{Q_{2\rhoz}^\la(\mfz)}|\tf|\ dz\\
				&\overset{\redlabel{5.37b}{b}}{\leq}&\eta\la^p+\frac{1}{|B_{2\rhoz}^\la(\mfz)|}\int_{B_{2\rhoz}^\la(\mfz)\cap \Sig_{\eta\la}^{r_2}}|\tf|\ dx,
			\end{array}
		\end{equation}
	where to obtain \redref{5.37a}{a}, we noted that $\lbr\frac{\mfa'}{n-\mfa'}\rbr\lbr p-\nonfe\rbr-\nonfe >0$ which follows from \cref{mfa_high} and the restriction $\la > \la_0$; to obtain \redref{5.37b}{b}, we made use of \cref{lv} along with \descref{C6}{C6}.
\item{{\bf Case b}:} This is the trivial case where \cref{alt} does not hold, i.e., the following holds:
\begin{gather}\label{alt2}
			 \la^{\nonfe}\fint_{B_{2\rhoz}^\la(\mathfrak{z})}(2\rhoz)^{\mfa'}|f|^{\mfa'}\ dx \leq \eta\la^p.
		\end{gather}
\end{description}		
	\end{description}
Combining \cref{est_I_5.37} and \cref{alt_est2} or \cref{alt2} into \cref{pre_est1}, we get
		\begin{equation*}
			\fiint_{Q_{\rhoz}^\la(\mathfrak{z})}(|\na u|+1)^p\ dz \apprle \eta\la^p+ \frac{1}{|Q_{2\rhoz}^\la(\mfz)|}\iint_{Q_{2\rhoz}^\la(\mfz)\cap\Phi_{\eta\la}^{r_2}}\la^{p-q}(|\na u|+1)^q\ dz+\frac{1}{|B_{2\rhoz}^\la(\mfz)|}\int_{B_{2\rhoz}^\la(\mfz)\cap \Sig_{\eta\la}^{r_2}}|\tf|\ dx.
		\end{equation*}
	Recalling \descref{C5}{C5}, \cref{stopping} and \cref{pre_est1}, we get
	\begin{equation}\label{step1_est_1}
		G(Q_{\rhoz}^\la(\mfz))\apprle \eta G(Q_{\rhoz}^\la(\mfz))+ \frac{1}{|Q_{2\rhoz}^\la(\mfz)|}\iint_{Q_{2\rhoz}^\la(\mfz)\cap\Phi_{\eta\la}^{r_2}}\la^{p-q}(|\na u|+1)^q\ dz+\frac{1}{|B_{2\rhoz}^\la(\mfz)|}\int_{B_{2\rhoz}^\la(\mfz)\cap \Sig_{\eta\la}^{r_2}}|\tf|\ dx.
	\end{equation}
We now choose $\eta$ small enough to absorb $G(Q_{\rhoz}^\la(\mfz))$ and recalling the bound $G(Q_{10\rhoz}^\la(\mfz))<\la^p=G(Q_{\rhoz}^\la(\mfz))$ which holds due to \cref{stopping}, we get
\begin{equation*}
	\iint_{Q_{10\rhoz}^\la(\mathfrak{z})}(|\na u|+1)^p\ dz \apprle_{(n,p,\La_0,\La_1,\de)} \iint_{Q_{2\rhoz}^\la(\mfz)\cap\Phi_{\eta\la}^{r_2}}\la^{p-q}(|\na u|+1)^q\ dz+ \iint_{Q_{2\rhoz}^\la(\mfz)\cap \Sig_{\eta\la}^{r_2}}|\tf(x) |\ dx.
\end{equation*}
		Note that when we multiply \cref{step1_est_1} with $|Q_{2\rhoz}^\la(\mfz)|$, the last term in \cref{step1_est_1} becomes  $$ |I_{2\rhoz}^{\la}|\int_{_{2\rhoz}^\la(\mfz)\cap \Sig_{\eta\la}^{r_2}}|\tf|\ dx = \iint_{Q_{2\rhoz}^\la(\mfz)\cap \Sig_{\eta\la}^{r_2}}|\tf(x)|\ dz.$$ 
	\end{proof}

	\subsection{Covering argument in the case \texorpdfstring{$p<2$}.}
	Recall the notation from \cref{singular_cylinder} which will be used in this subsection dealing with the singular case. In particular, we take $\rho = \la^{\frac{p-2}{2}}\rho$.
	\begin{lemma}
		\label{stopping_sing}
		With the constants fixed as in \cref{fixing_some_constants}, let $\la > 2^{\max\left\{\tfrac{d}{p}, \tfrac{\alpha d}{p}\right\}} \mathbb{B} \la_0$ and $\mfz \in E_{\la}$, then there exists $\rhoz\in\lbr0,\tfrac{r_2-r_1}{10}\rbr$ such that following holds:
		\begin{equation*}
		G(Q_{\rhoz}^\la(\mathfrak{z}))=\la^p\txt{and} G(Q_{\rho}^\la(\mathfrak{z}))\le\la^p\ \text{ for all }\ \ \rho\in(\rhoz,r_2-r_1),
		\end{equation*}
		where $G$ is defined in \descref{C5}{C5}.
	\end{lemma}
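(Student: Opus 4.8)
The plan is to run the argument word-for-word as in the proof of \cref{stopping}, changing only the bookkeeping forced by the anisotropic geometry of the singular intrinsic cylinder. Recall from \cref{singular_cylinder} that for $p<2$ the cylinder $Q_\rho^\la(\mfz)$ is ``short in space and long in time'': its spatial radius is $\la^{\frac{p-2}{2}}\rho\le\rho$ (as $\la\ge1$) and its time-length is comparable to $\rho^2$, so $|Q_\rho^\la(\mfz)|\approx\la^{\frac{n(p-2)}{2}}\rho^{n+2}$ and $|B_\rho^\la(\mfx)|\approx\la^{\frac{n(p-2)}{2}}\rho^{n}$. Exactly as in \cref{stopping}, the Lebesgue differentiation theorem gives $\lim_{\rho\to0}G(Q_\rho^\la(\mfz))\ge(|\na u(\mfz)|+1)^p>\la^p$ by \descref{C1}{C1} (the $f$-term in $G$ being non-negative), so $G(Q_\rho^\la(\mfz))>\la^p$ for all small $\rho$.

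Next, for $\tfrac{r_2-r_1}{10}\le\rho<r_2-r_1$ one still has $Q_\rho^\la(\mfz)\subset Q_{r_2}(z_0)\subset Q_{2r}(z_0)$ — the spatial radius is at most $\rho<r_2-r_1$ and $\rho^2<r_2^2-r_1^2$ — and therefore
\[
G(Q_\rho^\la(\mfz))\ \le\ \frac{|Q_{2r}(z_0)|}{|Q_\rho^\la(\mfz)|}\fiint_{Q_{2r}(z_0)}(|\na u|+1)^p\ dz\ +\ \frac{|B_{2r}(x_0)|}{|B_\rho^\la(\mfx)|}\,\la^{\nonfe}\fint_{B_{2r}(x_0)}(2r)^{\mfa'}|f|^{\mfa'}\ dx.
\]
I would then insert the two volume ratios above (each of which now carries a positive power of $\la$), the lower bound $\rho\ge\tfrac{r_2-r_1}{10}$, and \descref{C3}{C3}: the first summand gets bounded by $\lbr\tfrac{20r}{r_2-r_1}\rbr^{n+2}\la^{\frac{n(2-p)}{2}}\la_0^{p/d}$, and the second is treated in the same way, the only subtlety there being that one must also use the intrinsic scaling of the $\mfa'$-th power of the radius appearing in the $f$-term.

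At this point the singular-case values of $d$ and $\alpha$ from \descref{C2}{C2} do their job: a direct computation gives $\tfrac{n(2-p)}{2}+\tfrac pd=p$ — which is exactly why $d$ is defined as in \cref{def_d} — and the corresponding identity in the $f$-term holds by virtue of the definitions of $\alpha$ and of $\nu$ in \descref{C6}{C6}; at the same time the two exponents $(n+2)\tfrac dp$ and $n\tfrac{d\alpha}{p}$ built into the definition \descref{C4}{C4} of $\mathbb{B}$ are precisely those needed to absorb the geometric factors $\lbr\tfrac{20r}{r_2-r_1}\rbr^{n+2}$ and $\lbr\tfrac{20r}{r_2-r_1}\rbr^{n}$. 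Hence the hypothesis $\la>2^{\max\{d/p,\,\alpha d/p\}}\mathbb{B}\la_0$ forces each summand to be at most $\tfrac12\la^p$, so that $G(Q_\rho^\la(\mfz))\le\la^p$ on the whole range $\tfrac{r_2-r_1}{10}\le\rho<r_2-r_1$.

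Finally, since $\rho\mapsto G(Q_\rho^\la(\mfz))$ is continuous, is $>\la^p$ for small $\rho$ and is $\le\la^p$ at $\rho=\tfrac{r_2-r_1}{10}$, the intermediate value theorem produces a largest $\rhoz\in\bigl(0,\tfrac{r_2-r_1}{10}\bigr)$ with $G(Q_{\rhoz}^\la(\mfz))=\la^p$; by maximality of $\rhoz$, together with the bound just established on $[\tfrac{r_2-r_1}{10},r_2-r_1)$, one gets $G(Q_\rho^\la(\mfz))\le\la^p$ for every $\rho\in(\rhoz,r_2-r_1)$, which is the assertion. The one genuinely delicate point — the step I would write out in full — is the exponent arithmetic above: one must verify that all the extra powers of $\la$ generated by the anisotropic volumes $|Q_\rho^\la(\mfz)|$, $|B_\rho^\la(\mfx)|$ and by the intrinsic scaling of the radius-power in the $f$-term cancel exactly against the singular-case values of $d$, $\alpha$ and $\nu$, leaving no residual power of $\la$ or of $\la_0/\la$ with the wrong sign; everything else goes through as in the proof of \cref{stopping}.
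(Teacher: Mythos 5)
Your proposal is correct and follows essentially the same route as the paper: Lebesgue differentiation for the lower bound near $\rho=0$, the volume-ratio comparison with $Q_{2r}(z_0)$ carrying the extra powers $\la^{\frac{n(2-p)}{2}}$ and $\la^{\mfa'\frac{p-2}{2}}$ from the anisotropic cylinder, the exponent identities built into the singular-case $d$ and $\alpha$ of \descref{C2}{C2} together with \descref{C4}{C4}, and then taking the largest $\rhoz$ with $G(Q_{\rhoz}^\la(\mfz))=\la^p$. One small inaccuracy: the cancellation in the $f$-term uses only $d$ and $\alpha$ (via $p-\nonfe-\mfa'\tfrac{p-2}{2}-\tfrac{n(2-p)}{2}=\tfrac{p}{d\alpha}$), not the constant $\nu$ of \descref{C6}{C6}, which plays no role in this lemma.
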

	
	\begin{proof}
		From the Lebesgue differentiation theorem, we see that 
		\[
		\lim_{\rho \rightarrow 0} G(Q_\rho^\la(\mathfrak{z})) \geq (|\nabla u(\mfz)|+ 1)^p \overset{\descref{C1}{C1}}{>} \la^p.
		\]
		
		From observation $\frac{p}{d \alpha} > 0$, we see that for any $\frac{r_2-r_1}{10}\leq \rho<r_2-r_1$, it follows that $Q_\rho^\la(\mathfrak{z})\subset Q_{2r}(z_0)$. Recalling that the cylinders are of the form $Q_{\la^{\frac{p-2}{2}}R,R^2}$, we get 
		\begin{equation*}
		\begin{array}{rcl}
		G(Q_\rho^\la(\mathfrak{z}))
		&\le&\frac{|Q_{2r}(z_0)|}{|Q_{\rho}^\la(z_0)|}\fiint_{Q_{2r}(z_0)}\lbr|\na u|+1\rbr^p\ dz+\frac{|B_{2r}(x_0)|}{|B_{\rho}^{\la}(x_0)|}\la^{\nonfe}\fint_{B_{2r}(x_0)}(2r\la^{\frac{p-2}{2}})^{\mfa'}|f|^{\mfa'}\ dx\\
		&\overset{\descref{C3}{C3}}{\le}&\lbr\frac{20r}{r_2-r_1}\rbr^{n+2}\la^{\frac{(2-p)n}{2}}\la_0^{\frac{p(n+2)-2n}{2}}+\lbr\frac{20r}{r_2-r_1}\rbr^{n}\la^{\nonfe+\mfa' \lbr \frac{p-2}{2}\rbr}\la^{\frac{(2-p)n}{2}}\la_0^{\frac{p}{d}\frac{1}{\alpha}}\\
		&\overset{\descref{C2}{C2},\descref{C4}{C4}}{\le}&\frac{1}{2}\la^p+\frac{1}{2}\la^p=\la^p.
		\end{array}
		\end{equation*}		
		
		Let us define $\rhoz\in\lbr0,\frac{r_2-r_1}{10}\rbr$ to be the largest value such that $G(Q_{\rhoz}^\la(\mathfrak{z}))=\la^p$. This proves the lemma.
	\end{proof}

\begin{lemma}\label{lemma5.19}
	There exists $\eta \in (0,1)$ small such that for any $\la>2^{\max\left\{\tfrac{d}{p}, \tfrac{\alpha d}{p}\right\}}\mathbb{B}\la_0$ and $\mfz\in E_\la$, the following holds:
	\begin{equation*}
	\iint_{Q_{10\rhoz}^\la(\mathfrak{z})}(|\na u|+1)^p\ dz \apprle_{(n,p,\La_0,\La_1,\de)} \iint_{Q_{2\rhoz}^\la(\mfz)\cap\Phi_{\eta\la}^{r_2}}\la^{p-q}(|\na u|+1)^q\ dz+\iint_{Q_{2\rhoz}^\la(\mfz)\cap \Sig_{\eta\la}^{r_2}}|\tf(x)|\ dz.
	\end{equation*}    
\end{lemma}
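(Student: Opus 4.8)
The plan is to transcribe the proof of \cref{lemma5.17} into the singular regime, reading every intrinsic cylinder, ball and radius according to \cref{singular_cylinder} (so $Q_\rho^\la(\mfz)=Q_{\la^{(p-2)/2}\rho,\,\rho^2}(\mfz)$ and the spatial ball of $Q_\rho^\la(\mfz)$ is $B_{\la^{(p-2)/2}\rho}(\mfx)$), with \cref{stopping_sing} in place of \cref{stopping}. First I would feed $\mfz\in E_\la$ and $\la>2^{\max\{d/p,\,\alpha d/p\}}\mathbb{B}\la_0$ into \cref{stopping_sing} to get the stopping radius $\rhoz\in(0,(r_2-r_1)/10)$ with $G(Q_{\rhoz}^\la(\mfz))=\la^p$ and $G(Q_\rho^\la(\mfz))\le\la^p$ for $\rho\in(\rhoz,r_2-r_1)$, and I would write $\sigma:=\la^{(p-2)/2}\rhoz$ for the spatial radius of $Q_{\rhoz}^\la(\mfz)$. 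Since $10\rhoz<r_2-r_1$, the intrinsic bounds \cref{ge_la} and \cref{le_la} hold at scale $\rhoz$, so \cref{intr_reverse} applies on $Q_{\rhoz}^\la(\mfz)$ and yields, exactly as in \cref{pre_est1} (all radii read in the singular scaling),
\[
\fiint_{Q_{\rhoz}^\la(\mfz)}(|\na u|+1)^p\ dz \apprle \underbrace{\lbr\fiint_{Q_{2\rhoz}^\la(\mfz)}(|\na u|+1)^q\ dz\rbr^{\frac pq}}_{=: I}+\underbrace{\la^{\nonfe}\fint_{B_{2\sigma}(\mfx)}(2\sigma)^{\mfa'}|f|^{\mfa'}\ dx}_{=: II}.
\]

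The estimate of $I$ would be the same computation as in \cref{lemma5.17}: using $Q_{\rhoz}^\la(\mfz)\subset Q_{r_2}(z_0)$, splitting $Q_{2\rhoz}^\la(\mfz)$ along $\Phi_{\eta\la}^{r_2}$ of \cref{lv} and applying \cref{le_la} on the complement, one gets $I\le\eta\la^p+|Q_{2\rhoz}^\la(\mfz)|^{-1}\iint_{Q_{2\rhoz}^\la(\mfz)\cap\Phi_{\eta\la}^{r_2}}\la^{p-q}(|\na u|+1)^q\ dz$, with no input from the cylinder scaling. For $II$ I would again split into the trivial case $II\le\eta\la^p$ and the case $\eta\la^p\le II$; in the latter, using $f=f(x)$, \descref{C3}{C3} and the volume ratios of the singular cylinders (the computation already appearing in the proof of \cref{stopping_sing}), I would deduce a bound on the spatial radius of the form $(\sigma/r)^{n-\mfa'}\apprle\eta^{-1}\la^{\nonfe-p}\la_0^{p/(d\alpha)}$, where $n>\mfa'$ holds by \cref{mfa_high} together with \cref{def_de}.

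Raising this to the power $\tfrac{\mfa'}{n-\mfa'}$, substituting into $II=(\sigma/r)^{\mfa'}\la^{\nonfe}\fint_{B_{2\sigma}(\mfx)}(2r)^{\mfa'}|f|^{\mfa'}\ dx$, and recalling $\tf$ from \descref{C6}{C6}, the target $II\le\fiint_{Q_{2\rhoz}^\la(\mfz)}|\tf|\ dz$ reduces to checking that the residual power of $\la$ is $\le0$. This is the one point where the singular branches of \descref{C2}{C2} and \descref{C6}{C6} are used: with $\alpha=\bigl(1-\tfrac dp\nu\bigr)^{-1}$ and $\nu=\mfa'\bigl(\tfrac p2-\tfrac p\mfa\bigr)$ one has $\tfrac{p}{d\alpha}=\tfrac pd-\nu$, and the identity $n(\nonfe-\nu)=\mfa'\,p\,\bigl(1-\tfrac1d\bigr)$ — a direct consequence of the singular value $d=\tfrac{2p}{p(n+2)-2n}$ — makes the exponents of $\la$ and of $\la_0$ in the resulting bound for $II$ coincide; moreover $\de<1$ gives $\mfa=(\de p)^*<p^*$, hence $n\nonfe<\mfa'\,p$, so that common exponent $\tfrac{n\nonfe-\mfa'\,p}{n-\mfa'}$ is strictly negative and $\la>\la_0\ge1$ finishes the estimate. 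This is the singular analog of \redref{5.37a}{a} with $\nu$ in place of $\nonfe$. Then \cref{lv} upgrades this to $II\le\eta\la^p+|B_{2\sigma}(\mfx)|^{-1}\int_{B_{2\sigma}(\mfx)\cap\Sig_{\eta\la}^{r_2}}|\tf|\ dx$.

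Finally I would feed the bounds for $I$ and $II$ back, rewrite the left side as $G(Q_{\rhoz}^\la(\mfz))$ via \descref{C5}{C5} and \cref{stopping_sing} (as in \cref{step1_est_1}), pick $\eta=\eta(n,p,\La_0,\La_1,\de)\in(0,1)$ small enough to absorb the $\eta\,G(Q_{\rhoz}^\la(\mfz))$ term on the right, then use $G(Q_{10\rhoz}^\la(\mfz))<\la^p=G(Q_{\rhoz}^\la(\mfz))$ from \cref{stopping_sing} and multiply through by $|Q_{2\rhoz}^\la(\mfz)|$; since $\tf$ and $\Sig_{\eta\la}^{r_2}$ do not depend on $t$, the $\Sig$-term becomes $\iint_{Q_{2\rhoz}^\la(\mfz)\cap\Sig_{\eta\la}^{r_2}}|\tf(x)|\ dz$, which is the asserted inequality. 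I expect the main obstacle to be exactly this exponent bookkeeping in the non-trivial case of $II$: one must confirm that the singular choices of $d$, $\alpha$ and $\nu$ are calibrated so that, once the radius bound is inserted, the net power of $\la$ is nonpositive and $\la>\la_0$ can be exploited; everything else is the $p\ge2$ argument with the cylinders reorganized per \cref{singular_cylinder}.
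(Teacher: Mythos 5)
Your proposal is correct and follows essentially the same route as the paper's proof: stopping-time radius from \cref{stopping_sing}, the reverse H\"older estimate of \cref{intr_reverse}, the level-set splitting of $I$, the two-case treatment of $II$ with the radius bound and the exponent identity, then absorption and the enlargement $G(Q_{10\rhoz}^\la)\le\la^p=G(Q_{\rhoz}^\la)$. Your repackaging in terms of the intrinsic spatial radius $\sigma=\la^{(p-2)/2}\rhoz$ and the identity $n\bigl(\nonfe-\nu\bigr)=\mfa' p\bigl(1-\tfrac1d\bigr)$ is an equivalent (and correct) way of carrying out the same exponent bookkeeping that the paper does in \textup{(7.15)}--\textup{(7.17)} of its singular-case argument.
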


\begin{proof}
	From \cref{stopping}, we see that assumptions \cref{le_la} and \cref{ge_la} are satisfied since $10\rhoz < r_2-r_1$. Thus \cref{intr_reverse} gives
	\begin{equation}\label{pre_est1_sing}
	\fiint_{Q_{\rhoz}^\la(\mathfrak{z})}(|\na u|+1)^p\ dz \apprle\underbrace{\lbr\fiint_{Q_{2\rhoz}^\la(\mathfrak{z})}(|\na u|+1)^q\ dz\rbr^\frac{p}{q}}_{=: I}+\underbrace{\la^{\nonfe + \mfa'\lbr \frac{p-2}{2}\rbr}\fint_{B_{2\rhoz}^\la(\mathfrak{z})}(2\rhoz)^{\mfa'}|f|^{\mfa'}\ dx}_{=:II}.
	\end{equation}
	In order to estimate \cref{pre_est1_sing}, we will first choose some $\eta \in (0,1)$ to be eventually fixed depending on the data and estimate each of the terms as follows:
	\begin{description}
		\item [Estimate of $I$:] Let $\eta\in(0,1)$ will be chosen later. Then since $Q_{\rhoz}^\la(\mfz)\subset Q_{r_2}(z_0)$ holds, we see that
		\begin{equation}\label{est_I_5.37_sing}
		\begin{array}{rcl}
		I
		&\overset{\text{\cref{lv}}}{\le}&\eta\la^p+\lbr\frac{1}{|Q_{2\rhoz}^\la(\mfz)|}\iint_{Q_{2\rhoz}^\la(\mfz)\cap \Phi_{\eta\la}^{r_2}}(|\na u|+1)^q\ dz\rbr^\frac{p}{q}\\
		&\le&\eta\la^p+\lbr\frac{1}{|Q_{2\rhoz}^\la(\mfz)|}\iint_{Q_{2\rhoz}^\la(\mfz)\cap\Phi_{\eta\la}^{r_2}}(|\na u|+1)^q\ dz\rbr\lbr\fiint_{Q_{2\rhoz}^\la(\mfz)}(|\na u|+1)^q\ dz\rbr^{\frac{p}{q}-1}\\
		&\overset{\cref{le_la}}{\le}&\eta\la^p+\frac{1}{|Q_{2\rhoz}^\la(\mfz)|}\iint_{Q_{2\rhoz}^\la(\mfz)\cap\Phi_{\eta\la}^{r_2}}\la^{p-q}(|\na u|+1)^q\ dz.
		\end{array}
		\end{equation}

		\item [Estimate of $II$:] To estimate this term, we shall split the proof into two cases as follows (note that $\eta$ is yet to be determined):
		\begin{description}[leftmargin=0pt]
			\item{{\bf Case a}:} In this case, let us assume that the following holds:
			\begin{equation*}
			\eta\la^p\le\la^{\nonfe+\mfa'\lbr \frac{p-2}{2}\rbr}\fint_{B_{2\rhoz}^\la(\mathfrak{z})}(2\rhoz)^{\mfa'}|f|^{\mfa'}\ dx.
			\end{equation*}
			Recall that $f = f(x)$, thus we get
			\begin{equation}\label{5.35_sing}
			\eta\la^p
			\le\la^{\nonfe+\mfa'\lbr \frac{p-2}{2}\rbr}\fint_{B_{2\rhoz}^{\la}(\mathfrak{x})}(2\rhoz)^{\mfa'}|f|^{\mfa'}\ dx
			\overset{\text{\descref{C3}{C3}}}{\le}\la^{\nonfe+\mfa'\lbr \frac{p-2}{2}\rbr + \frac{n(2-p)}{2}}\lbr\frac{r}{\rhoz}\rbr^{n-\mfa'}\la_0^{\frac{p}{d\alpha}}.
			\end{equation}
			It is easy to see that $n > \mfa'$, since in the case $p>n$, we trivially have $n > \mfa'=1$ (see \cref{mfa_high}) and in the case $p\leq n$, we have $n > \mfa' \Leftrightarrow \mfa > n' \Leftrightarrow \de > \frac{1}{p}$ which holds from \cref{mfa_high}.
			
			We see that $\frac{p}{d\alpha}>0$ if and only if $\al>0$ which is equivalent to $d < \frac{2(\mfa-1)}{\mfa-2}$. Since we are in the case $p \leq 2$ and $n \geq 2$, we see that $\mfa = \frac{n\de p}{n-\de p}$, thus $\al >0$ is equivalent to 
			\begin{equation*}
				\begin{array}{rcl}
					\frac{2p}{p(n+2)-2n} \leq 2 \lbr \frac{n\de p - n +\de p}{\de p(n+2)-2n}\rbr \quad \Longleftrightarrow \quad 0 &\leq& n(\de p  -1) + p \lbr \de - \frac{\de p (n+2)-2n}{p(n+2)-2n}\rbr\\
					& =& n(\de p -1) + \frac{2np(1-\de)}{ p (n+2)-2n}.
				\end{array}
			\end{equation*}
			Since we assumed $\de > \frac{1}{p}$ from \cref{mfa_high}, the above quantity is positive. Moreover, we see that 
			\[
			p-\nonfe-\mfa'\lbr \frac{p-2}{2}\rbr-\frac{n(2-p)}{2} = \frac{p}{d \al}, 
			\]
			which is used to rewrite \cref{5.35_sing} as 
			\begin{equation}\label{alt_est3}
			\frac{\tilde{\rhoz}}{r}\leq \eta^{-\frac{1}{n-\mfa'}}\lbr\frac{\la_0}{\la}\rbr^{\frac{p}{d\alpha(n-\mfa')}}.
			\end{equation}
			Therefore \cref{alt_est3} implies that 
			\begin{equation}\label{alt_est2_sing}
			\begin{array}{rcl}
			&&\la^{\nonfe + \mfa'\lbr \frac{p-2}{2}\rbr}\fint_{B_{2\rhoz}^\la(\mfz)}(2\rhoz)^{\mfa'}|f|^{\mfa'}\ dx\\
			&&=\lbr\frac{\rhoz}{r}\rbr^{\mfa'}\la^{\nonfe + \mfa'\lbr \frac{p-2}{2}\rbr}\fint_{B_{2\rhoz}^\la(\mfz)}(2r)^{\mfa'}|f|^{\mfa'}\ dx\\
			&&\le \lbr \frac{\la_0}{\la} \rbr^{\frac{\mfa'}{n-\mfa'} \frac{p}{d\al}-\nonfe - \mfa'\lbr \frac{p-2}{2}\rbr}\frac{\la_0^{\nonfe + \mfa'\lbr \frac{p-2}{2}\rbr}}{\eta^{\frac{\mfa'}{n-\mfa'}}}\fint_{B_{2\rhoz}^\la(\mfz)}(2r)^{\mfa'}|f|^{\mfa'}\ dx\\
			&& \overset{\redlabel{5.37sa}{a}}{\leq}  \eta^{-\frac{\mfa'}{n-\mfa'}}\la_0^{\nonfe + \mfa'\lbr \frac{p-2}{2}\rbr}\fint_{B_{2\rhoz}^\la(\mfz)}(2r)^{\mfa'}|f|^{\mfa'}\ dx\overset{\text{\descref{C6}{C6}}}{=}\fiint_{Q_{2\rhoz}^\la(\mfz)}|\tf|\ dz\\
			&&\overset{\redlabel{5.37sb}{b}}{\leq}\eta\la^p+\frac{1}{|B_{2\rhoz}^\la(\mfz)|}\int_{B_{2\rhoz}^\la(\mfz)\cap \Sig_{\eta\la}^{r_2}}|\tf|\ dx,
			\end{array}
			\end{equation}
			where to obtain \redref{5.37sa}{a}, we noted that $\la \geq \la_0$ along with the calculations (recall $\mfa = \tfrac{n\de p}{n-\de p}$)		
			\[
			\begin{array}{rcl}
			\frac{\mfa'}{n-\mfa'} \frac{p}{d\al}-\nonfe - \mfa'\lbr \frac{p-2}{2}\rbr >0  & \Longleftrightarrow & \frac{1}{n-\mfa'} \lbr \frac{1}{d} - n \lbr \frac12 - \frac{1}{\mfa}\rbr\rbr >0 \\
			& \Longleftrightarrow & \frac{n(1-\de)}{\de p} > 0,
			\end{array}
			\]
			and  to obtain \redref{5.37sb}{b}, we made use of \cref{lv} along with \descref{C6}{C6}.
			\item{{\bf Case b}:} This is the trivial case where \cref{alt} does not hold, i.e., the following holds:
			\begin{gather}\label{alt2_sing}
			\la^{\nonfe + \mfa'\lbr \frac{p-2}{2}\rbr}\fint_{B_{2\rhoz}^\la(\mathfrak{z})}(2\rhoz)^{\mfa'}|f|^{\mfa'}\ dx \leq \eta\la^p.
			\end{gather}
		\end{description}		
	\end{description}
	Combining \cref{est_I_5.37_sing} and \cref{alt_est2_sing} or \cref{alt2_sing} into \cref{pre_est1_sing}, we get
	\begin{equation*}
	\fiint_{Q_{\rhoz}^\la(\mathfrak{z})}(|\na u|+1)^p\ dz \apprle \eta\la^p+ \frac{1}{|Q_{2\rhoz}^\la(\mfz)|}\iint_{Q_{2\rhoz}^\la(\mfz)\cap\Phi_{\eta\la}^{r_2}}\la^{p-q}(|\na u|+1)^q\ dz+\frac{1}{|B_{2\rhoz}^\la(\mfz)|}\int_{B_{2\rhoz}^\la(\mfz)\cap \Sig_{\eta\la}^{r_2}}|\tf|\ dx.
	\end{equation*}
	Recalling \descref{C5}{C5}, \cref{stopping_sing} and \cref{pre_est1_sing}, we get
	\begin{equation}\label{step1_est_1_sing}
	G(Q_{\rhoz}^\la(\mfz))\apprle \eta G(Q_{\rhoz}^\la(\mfz))+ \frac{1}{|Q_{2\rhoz}^\la(\mfz)|}\iint_{Q_{2\rhoz}^\la(\mfz)\cap\Phi_{\eta\la}^{r_2}}\la^{p-q}(|\na u|+1)^q\ dz+\frac{1}{|B_{2\rhoz}^\la(\mfz)|}\int_{B_{2\rhoz}^\la(\mfz)\cap \Sig_{\eta\la}^{r_2}}|\tf|\ dx.
	\end{equation}
	We now choose $\eta$ small enough to absorb $G(Q_{\rhoz}^\la(\mfz))$ and recalling the bound $G(Q_{10\rhoz}^\la(\mfz))<\la^p=G(Q_{\rhoz}^\la(\mfz))$ which holds due to \cref{stopping_sing}, we get
	\begin{equation*}
	\iint_{Q_{10\rhoz}^\la(\mathfrak{z})}(|\na u|+1)^p\ dz \apprle_{(n,p,\La_0,\La_1,\de)} \iint_{Q_{2\rhoz}^\la(\mfz)\cap\Phi_{\eta\la}^{r_2}}\la^{p-q}(|\na u|+1)^q\ dz+ \iint_{Q_{2\rhoz}^\la(\mfz)\cap \Sig_{\eta\la}^{r_2}}|\tf(x) |\ dx.
	\end{equation*}
	Note that when we multiply \cref{step1_est_1_sing} with $|Q_{2\rhoz}^\la(\mfz)|$, the last term in \cref{step1_est_1_sing} becomes  $$ |I_{2\rhoz}^{\la}|\int_{_{2\rhoz}^\la(\mfz)\cap \Sig_{\eta\la}^{r_2}}|\tf|\ dx = \iint_{Q_{2\rhoz}^\la(\mfz)\cap \Sig_{\eta\la}^{r_2}}|\tf(x)|\ dz.$$ 
\end{proof}
	\subsection{Vitali Covering lemma}

	\begin{lemma}
		With $\eta$ fixed according to \cref{lemma5.17} and \cref{lemma5.19}, let $\la_1 = 2^{\max\left\{\tfrac{d}{p}, \tfrac{\alpha d}{p}\right\}} \mathbb{B} \la_0$, then for any $\la > \la_1$, there holds
		\begin{equation}\label{pre_lv_est}
			\iint_{\Phi_{\la}^{r_1}}(|\na u|+1)^p\ dz
			\apprle_{(n,p,\La_0,\La_1,\de)}\iint_{\Phi_{\la}^{r_2}}\la^{p-q}(|\na u|+1)^q\ dz+\iint_{\Sig_{\la}^{r_2}}|\tf|\ dz.
		\end{equation}
	\end{lemma}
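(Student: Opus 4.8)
The plan is a stopping-time plus Vitali-covering argument resting entirely on \cref{stopping,lemma5.17,stopping_sing,lemma5.19}. First I would note that, since $t\mapsto t^p$ is increasing, the level set $\Phi_\la^{r_1}$ from \cref{lv} is literally the set $E_\la$ of \descref{C1}{C1}. Fix $\la>\la_1$. For every $\mfz\in E_\la$, \cref{stopping} (if $p\ge2$) or \cref{stopping_sing} (if $p<2$) yields a stopping radius $\rhoz\in\lbr0,\tfrac{r_2-r_1}{10}\rbr$ with $G(Q_{\rhoz}^\la(\mfz))=\la^p$ and $G(Q_\rho^\la(\mfz))\le\la^p$ for every $\rho\in(\rhoz,r_2-r_1)$; in particular \cref{le_la} and \cref{ge_la} hold on $Q_{10\rhoz}^\la(\mfz)$, and $10\rhoz<r_2-r_1$ forces $Q_{10\rhoz}^\la(\mfz)\subset Q_{r_2}(z_0)$. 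Hence $\{Q_{2\rhoz}^\la(\mfz):\mfz\in E_\la\}$ is a cover of $E_\la$ by intrinsic cylinders of uniformly bounded radii, all contained in $Q_{r_2}(z_0)$.

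Next I would apply the Vitali ($5r$-)covering lemma to this family. The point to check is that such a lemma is available for the anisotropic cylinders $Q_\rho^\la(\cdot)$: for a fixed $\la$ these are exactly the balls of radius $\rho$ for the fixed metric on $\RR^{n+1}$ given by $\max\{|x-y|,\la^{\frac{p-2}{2}}|t-s|^{1/2}\}$ when $p\ge2$ and by $\max\{\la^{\frac{2-p}{2}}|x-y|,|t-s|^{1/2}\}$ when $p<2$, so the classical covering lemma applies with a constant depending only on $n$. This gives a countable pairwise disjoint subfamily $\{Q_{2\rho_i}^\la(\mfz_i)\}_{i\in\NN}$, with $\mfz_i\in E_\la$ and $\rho_i:=\rho_{\mfz_i}$, whose fivefold enlargements still cover $E_\la$, i.e. $E_\la\subset\bigcup_iQ_{10\rho_i}^\la(\mfz_i)$; this enlargement factor is precisely why \cref{lemma5.17} and \cref{lemma5.19} carry $Q_{10\rhoz}^\la=Q_{5\cdot2\rhoz}^\la$ on the left.

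I would then sum the local estimates. Using $\Phi_\la^{r_1}=E_\la$, the covering, \cref{lemma5.17} (for $p\ge2$) or \cref{lemma5.19} (for $p<2$) applied on each $Q_{10\rho_i}^\la(\mfz_i)$, and then the pairwise disjointness of the cylinders $Q_{2\rho_i}^\la(\mfz_i)$ together with $\bigcup_iQ_{2\rho_i}^\la(\mfz_i)\subset Q_{r_2}(z_0)$, one arrives at
\begin{gather*}
\iint_{\Phi_\la^{r_1}}(|\na u|+1)^p\ dz\ \le\ \sum_i\iint_{Q_{10\rho_i}^\la(\mfz_i)}(|\na u|+1)^p\ dz\\
\apprle_{(n,p,\La_0,\La_1,\de)}\ \iint_{\Phi_{\eta\la}^{r_2}}\la^{p-q}(|\na u|+1)^q\ dz+\iint_{\Sig_{\eta\la}^{r_2}}|\tf|\ dz,
\end{gather*}
which is \cref{pre_lv_est}, the level sets being taken at the fixed threshold $\eta\la$ dictated by \cref{lemma5.17} and \cref{lemma5.19}.

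The step I expect to be the main obstacle is making the covering rigorous in the intrinsic geometry: one must state the $5r$-covering lemma for the cylinders $Q_\rho^\la(\cdot)$ and check that the enlargement factor there is the same $5$ that was built in as $10=5\cdot2$ in \cref{lemma5.17} and \cref{lemma5.19}. Everything else is the routine bookkeeping displayed above; the substantive content---the stopping-time construction and the reverse-type estimate on each $Q_{2\rhoz}^\la(\mfz)$---is already in \cref{stopping,lemma5.17,stopping_sing,lemma5.19}.
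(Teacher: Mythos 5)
Your proposal is correct and follows essentially the same route as the paper: the stopping radii from \cref{stopping} and \cref{stopping_sing}, a Vitali $5r$-covering of $E_\la$ by the disjoint cylinders $Q_{2\rho_{\mfz_i}}^\la(\mfz_i)$ (which, as you note, are metric balls for a fixed quasi-metric once $\la$ is fixed), and summation of \cref{lemma5.17} or \cref{lemma5.19} using disjointness and $Q_{2\rho_{\mfz_i}}^\la(\mfz_i)\subset Q_{r_2}(z_0)$. The only cosmetic difference is that the paper additionally bounds the contribution of $\Phi_{\eta\la}^{r_1}\setminus E_\la$ by $\la^{p-q}(|\na u|+1)^q$ so as to put the larger set $\Phi_{\eta\la}^{r_1}$ on the left, whereas you keep the left-hand side as $E_\la=\Phi_\la^{r_1}$; both versions coincide with the stated estimate after absorbing the fixed constant $\eta$ into the implicit constant.
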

\begin{proof}
	Applying Vitali covering Lemma to $\{Q_{10\rhoz}^\la(\mfz)\}_{\mfz\in E_\la}$ where the cylinders are as obtained in \cref{stopping} and \cref{stopping_sing}, then we obtain disjoint countable subfamily  $\{Q_{2\rho_{\mfz_i}}^\la(\mfz_i)\}_{\mfz_i\in E_\la}$ such that 
	\begin{equation*}
		E_\la\subset\bigcup_{\mfz\in E_\la}Q_{2\rhoz}^\la(\mfz)
		\subset\bigcup_{1\le i<\infty}Q_{10\rho_{\mfz_i}}^\la(\mfz_i)\subset Q_{r_2}(z_0),
	\end{equation*}
where the last inclusion holds since $10 \rho_{\mfz_i} \leq r_2-r_1$. Thus we get
\begin{equation*}
	\begin{array}{rcl}
		\iint_{E_\la}(|\na u|+1)^p\ dz
		&\le&\sum_{1\le i<\infty}\iint_{Q_{10\rho_{\mfz_i}^\la(\mfz_i)}}(|\na u|+1)^p\ dz\\
		&\overset{\cref{step1_est}}{\apprle}&\sum_{1\le i<\infty}\lbr\iint_{Q_{2\rho_{\mfz_i}^\la(\mfz_i)\cap \Phi^{r_2}_{\eta\la}}}\la^{p-q}(|\na u|+1)^q\ dz+\iint_{Q_{2\rho_{\mfz_i}^\la(\mfz_i)\cap\Sig_{\eta\la}^{r_2}}}|\tf|\ dz\rbr\\
		&\le&\iint_{\Phi_{\eta\la}^{r_2}}\la^{p-q}(|\na u|+1)^q\ dz+\iint_{\Sig_{\eta\la}^{r_2}}|\tf|\ dz.
	\end{array}
\end{equation*}
Also, since there holds
\begin{equation*}
	\iint_{\Phi_{\eta\la}^{r_1}\setminus E_\la}(|\na u|+1)^p\ dz\le\iint_{\Phi_{\eta\la}^{r_1}\setminus E_\la}\la^{p-q}(|\na u|+1)^q\ dz,
\end{equation*}
from which it follows that
\begin{equation*}
	\iint_{\Phi_{\eta\la}^{r_1}}(|\na u|+1)^p\ dz
	=\iint_{\Phi_{\eta\la}^{r_1}\setminus E_\la}(|\na u|+1)^p\ dz+\iint_{E_\la}(|\na u|+1)^p\ dz
	\apprle\iint_{\Phi_{\eta\la}^{r_2}}\la^{p-q}(|\na u|+1)^q\ dz+\iint_{\Sig_{\eta\la}^{r_2}}|\tf|\ dz.
\end{equation*}
\end{proof}
	
	\subsection{Proof of \texorpdfstring{\cref{thm_1}}.}\label{sub2.2}
\label{subsection5.8}
For $k>\la_1$, let us define
\begin{gather*}
	|\na u|_k:=\min\{|\na u|+1,k\}\txt{and}\Phi^{\rho}_{\la,k}:=\{z\in Q_{\rho}(z_0): |\na u|^p_k>\la^p\}.
\end{gather*}
From this, we see that for  $\la>k$, then $\Phi^{r_1}_{\la,k}=\varnothing$ and if $\la\le k$, then $\Phi^{r_1}_{\la,k}=\Phi^{r_1}_{\la}$ and $\Phi^{r_2}_{\la,k}=\Phi^{r_2}_{\la}$. Thus from \cref{pre_lv_est}, we deduce 
\begin{equation}\label{lv_est}
	\iint_{\Phi_{\la,k}^{r_1}}|\na u|^p\ dz
	\apprle\iint_{\Phi_{\la,k}^{r_2}}\la^{p-q}|\na u|^q\ dz+\iint_{\Sig_{\la}^{r_2}}|\tf|\ dz.
\end{equation}
Let $a>0$. Multiplying \cref{lv_est} with  $\frac{\la^{a-1}}{e+\la^a}$ and integrating over $(\la_1,\infty)$, we get
\begin{equation*}
		\underbrace{\int_{\la_1}^{\infty}\frac{\la^{a-1}}{e+\la^a}\iint_{\Phi_{\la,k}^{r_1}}(|\na u|+1)^p\ dz\ d\la}_{=:I}
		\apprle \underbrace{\int_{\la_1}^{\infty}\frac{\la^{p-q+a-1}}{e+\la^a}\iint_{\Phi_{\la,k}^{r_2}}(|\na u|+1)^q\ dz\ d\la}_{=:II}+\underbrace{\int_{\la_1}^{\infty}\frac{\la^{a-1}}{e+\la^a}\iint_{\Sig_{\la}^{r_2}}|\tf|\ dz\ d\la}_{=:III}.
\end{equation*}
\begin{description}
	\item [Estimate of $I$:] Apply Fubini to get
	\begin{equation*}
		\begin{array}{rcl}
			I
			=\iint_{\Phi_{\la_1,k}^{r_1}}(|\na u|+1)^p\int_{\la_1}^{|\na u|_k}\frac{\la^{a-1}}{e+\la^a}\ d\la\ dz
			&\ge&\frac{1}{a}\iint_{\Phi_{\la_1,k}^{r_1}}(|\na u|+1)^p\log(e+|\na u|_k^a)\ dz\\
			&& -\frac{1}{a}\iint_{\Phi_{\la_1,k}^{r_1}}(|\na u|+1)^p\log(e+\la_1^a)\ dz.
		\end{array}
	\end{equation*}
	\item [Estimate of $II$:] Fubini gives 
	\begin{equation*}
	\begin{array}{rclll}
		II =\iint_{\Phi_{\la_1,k}^{r_2}}(|\na u|+1)^q \int_{\la_1}^{|\na u|_k}\frac{\la^{p-q+a-1}}{e+\la^a}\ d\la \ dz
		&\leq& \iint_{\Phi_{\la_1,k}^{r_2}}(|\na u|+1)^q \int_{\la_1}^{|\na u|_k}{\la^{p-q-1}}\ d\la \ dz \\
		 &\leq&  \frac{1}{p-q} \iint_{\Phi_{\la_1,k}^{r_2}}(|\na u|+1)^p\ dz.
		\end{array}
	\end{equation*}

	\item [Estimate of $III$:] Fubini implies
	\begin{equation*}
		III=\iint_{\Sig_{\la_1}^{r_2}}|\tf|\int_{\la_1}^{|\tf|}\frac{\la^{a-1}}{e+\la^a}\ d\la \ dz\le \frac{1}{a}\iint_{\Sig_{\la_1}^{r_2}}|\tf|\log(e+|\tf|^a)\ dz.
	\end{equation*}
\end{description}
Combining the above estimates, we get
\begin{equation*}
		\iint_{\Phi_{\la_1,k}^{r_1}}(|\na u|+1)^p\log(e+|\na u|_k^a)\ dz\apprle_{(n,p,\La_0,\La_1,\de,a)} \log(e+\la_1^a)\iint_{\Phi_{\la_1,k}^{r_2}}(|\na u|+1)^p\ dz+\iint_{\Sig_{\la_1}^{r_2}}|\tf|\log(e+|\tf|^a)\ dz.
\end{equation*}
On the other hand, we trivially have 
\begin{equation*}
		\iint_{\{z\in Q_{r_1}(z_0):|\na u|_k\le \la_1\}}(|\na u|+1)^p\log(e+|\na u|_k^a)\ dz
		\le \log(e+\la_1^a)\iint_{\{z\in Q_{r_2}(z_0):|\na u|_k\le \la_1\}}(|\na u|+1)^p\ dz,
\end{equation*}
which gives
\begin{equation*}
	\fiint_{Q_{r_1}(z_0)}(|\na u|+1)^p\log(e+|\na u|_k^a) \ dz
	\apprle\log(e+\la_0^d)\fiint_{Q_{r_2}(z_0)}(|\na u|+1)^p\ dz+\fiint_{Q_{r_2}(z_0)}|\tf|\log(e+|\tf|)\ dz.
\end{equation*}
Take $r_1=r$ and $r_2=2r$ and let $k\to\infty$, which completes the proof.

\section{Proof of \texorpdfstring{\cref{cor_1}}.}

 Let $a >0$ and $m \in \NN$ be a given integer, then in \cref{lv_est}, we multiply with $ \frac{\la^{a-1}}{e+\la^a}\log_{(m-1)}(e+\la^a)$ and integrate to get
 \begin{description}
 	\item [Estimate of $I$:] Apply Fubini to get
 	\begin{equation*}
 	\begin{array}{rcl}
 	I
 	&=&\iint_{\Phi_{\la_1,k}^{r_1}}(|\na u|+1)^p\int_{\la_1}^{|\na u|_k}\frac{\la^{a-1}}{e+\la^a}\log_{(m-1)}(e+\la^a)\ d\la\ dz\\
 	&\ge&\frac{1}{a}\iint_{\Phi_{\la_1,k}^{r_1}}(|\na u|+1)^p\log_{(m)}(e+|\na u|_k^a)\ dz\\
 	&& -\frac{1}{a}\iint_{\Phi_{\la_1,k}^{r_1}}(|\na u|+1)^p\log_{(m)}(e+\la_1^a)\ dz.
 	\end{array}
 	\end{equation*}
 	\item [Estimate of $II$:] Fubini gives 
 	\begin{equation*}
 	\begin{array}{rcl}
 	II &=&\iint_{\Phi_{\la_1,k}^{r_2}}(|\na u|+1)^q \int_{\la_1}^{|\na u|_k}\log_{(m-1)}(e+\la^a)\frac{\la^{p-q+a-1}}{e+\la^a}\ d\la \ dz\\
 	&\leq& \iint_{\Phi_{\la_1,k}^{r_2}}(|\na u|+1)^q \log_{(m-1)}(e+|\nabla u|_k^a) \int_{\la_1}^{|\na u|_k}{\la^{p-q-1}}\ d\la \ dz \\
 	&\leq&   \frac{1}{p-q}\iint_{\Phi_{\la_1,k}^{r_2}}(|\na u|+1)^p\log_{(m-1)}(e+|\nabla u|_k^a)\ dz.
 	\end{array}
 	\end{equation*}
 	
 	\item [Estimate of $III$:] Fubini implies
 	\begin{equation*}
 	III=\iint_{\Sig_{\la_1}^{r_2}}|\tf|\int_{\la_1}^{|\tf|}\log_{(m-1)}(e+\la^a)\frac{\la^{a-1}}{e+\la^a}\ d\la \ dz\le \frac{1}{a}\iint_{\Sig_{\la_1}^{r_2}}|\tf|\log_{(m)}(e+|\tf|^a)\ dz.
 	\end{equation*}
 \end{description}
 Proceeding as in \cref{sub2.2}, we have 
\begin{equation*}
\begin{array}{rcl}
\fiint_{Q_{r}(z_0)}(|\na u|+1)^p\log_{(m)}(e+|\na u|^a) \ dz
& \apprle & \log_{(m)}(e+\la_0^d)\fiint_{Q_{2r}(z_0)}(|\na u|+1)^p\ dz \\
&&+\fiint_{Q_{2r}(z_0)}|\tf|\log_{(m)}(e+|\tf|)\ dz\\
&& + \fiint_{Q_{2r}(z_0)}(|\na u|+1)^p\log_{(m-1)}(e+|\nabla u|^a)\ dz.
\end{array}
\end{equation*}
Iterating the estimate over $m = \{0,1,\ldots,m_0\}$  proves the Corollary.

\section{Proof of \texorpdfstring{\cref{cor_2}}.}

Given $\de\in(0,1)$ and $\mfa = \frac{n \de p}{n-\de p}$,  then there exists $\ve_1=\ve_1(p,\de)$ such that for any $\ve\in(0,\ve_1)$, there holds
\begin{equation*}
\frac{p}{p-1}+\frac{\ve}{p-1}<(\mfa')^* \quad \Longleftrightarrow \quad \ve < \frac{p(1-\de)}{p\de -1}.
\end{equation*}
Applying \cref{lem_1.34} for a.e. $t\in I_{2r}(t_0)$, it follows that
\begin{equation*}
\fint_{B_{2r}(x_0)} |I_1 g(x,t)|^{\frac{p}{p-1}+\frac{\ve}{p-1}}\ dx\apprle_{(n,p,\de)} (2r)^{\alpha}\lbr\fint_{B_{2r}(x_0)}(2r)^{\mfa'}|g(x,t)|^{\mfa'}\ dx\rbr^{\frac{p}{(p-1)\mfa'}+\frac{\ve}{(p-1)\mfa'}},
\end{equation*}
where
\begin{equation*}
\alpha=\lbr \frac{p+\ve}{p-1}\rbr \lbr \frac{n\ga}{s} + \frac{n}{\mfa'} - 1\rbr - n \quad \text{with} \ \ga = \frac{(1-n)s+n}{s} \ \text{and} \ s= \frac{1}{1-\frac{1}{\mfa'}+\frac{p-1}{p+\ve}}.
\end{equation*}
With these choices, it is easy to see that $\al =0$, which immediately gives
\begin{equation}\label{est3_1_c}
\fiint_{Q_{2r}(z_0)}|I_1g|^{\frac{p}{p-1}+\frac{\ve}{p-1}}\ dx\apprle \fint_{I_{2r}(t_0)}\lbr \fint_{B_{2r}(x_0)}(2r)^{\mfa'}|g|^{\mfa'}\ dx\rbr^{\frac{p}{(p-1)\mfa'}+\frac{\ve}{(p-1)\mfa'}}\ dt.
\end{equation}
On the other hand, \cref{thm_2} gives that there exists a $\ve_2=\ve_2(n,p,\La_0,\La_1)$ such that for any $\ve\in(0,\ve_2)$ there holds
\begin{equation}\label{est3_2_c}
\fiint_{Q_r(z_0)}|\na u|^{p+\ve}\ dz\apprle_{(n,p,\La_0,\La_1)}\la_0^{\ve}\fiint_{Q_{2r}(z_0)}(|\na u|+1)^p\ dz+\fiint_{Q_{2r}(z_0)}|I_1g|^{\frac{p}{p-1}+\frac{\ve}{p-1}}\ dx.
\end{equation}

Hence, taking $\ve_0=\min\{\ve_1,\ve_2\}$ and combining \cref{est3_1_c} and \cref{est3_2_c}, this completes the proof.

\section{Proof of \texorpdfstring{\cref{thm_3}}. - Borderline gradient potential estimates}
\label{section7}
In this section, we consider a weak solution $u\in C(0,T;L^2(\Omega))\cap L^p(0,T;W^{1,p}(\Omega))$ of
\begin{equation}\label{p_main}
u_t-\dv|\na u|^{p-2}\na u=f(x)\text{ in }\Omega_T.
\end{equation}

\subsection{Fixing some quantities}

\begin{definition}\label{def5.1}
	Let us define the following constants:
	\begin{description}
		\descitem{C7}{C7} We denote $\mfa:=\frac{np}{n-p}$ if $p<n$ and $\mfa:=\infty$ if $p>n$.
		\descitem{C8}{C8} For $p\ne n$ and $\mfa$ defined above, we denote
		\begin{gather*}
		\bM_p^f(B_r(x_0)):=\int_0^r\lbr[][\frac{|f|^{\mfa'}(B_s(x_0))}{s^{n-\mfa'}}\rbr[]]^{\frac{1}{\mfa'(p-1)}}\ \frac{ds}{s}\txt{and}\bP^f_{\mfa'}(B_r(x_0)):=\int_0^r\lbr\frac{|f|^{\mfa'}(B_s(x_0))}{s^{n-\mfa'}}\rbr^\frac{1}{\mfa'}\ \frac{ds}{s}.
		\end{gather*}
		\descitem{C9}{C9}  In the case $p=n$, we shall denote
		\begin{equation*}
		| f|_{L\log L(B_s(x_0))}:=\int_{B_s(x_0)}|f|\log\lbr e+\frac{|f|}{\int_{B_{s}(x_0)}|f|\ dx}\rbr\ dx,
		\end{equation*}
		and
		\begin{equation*}
		\bM_n^{f}(B_r(x_0)):=\int_0^r\lbr[[] s^{-(n-1)}| f|_{L\log L(B_s(x_0))}\rbr[]]^\frac{1}{n-1}\ \frac{ds}{s},
		\end{equation*}
		where the norm $L\log L$ is defined in \cref{llog_form}.
		\descitem{C10}{C10} We define
		\begin{equation*}
		\bF=\bF(B_{2\rho}):=
		\begin{cases}
		\bP_{\mfa'}^{f}(B_{2\rho})&\text{ if }p< 2,\\
		\bM_p^f(B_{2\rho})&\text{ if }p\ge 2.
		\end{cases}
		\end{equation*}
	\end{description}
\end{definition}

\begin{definition}
	Let $q\in[1,\infty)$, then for any measurable set $Q$ and integrable function $h$ defined in $Q$, we define excess functional 
	\begin{equation*}
	E_q(h,Q):=\lbr\fiint_{Q}|h-(h)_{Q}|^q\ dz\rbr^\frac{1}{q}.
	\end{equation*}
\end{definition}
\begin{remark}
	Note that for any $\Gamma\in \RR^n$, there holds that
	\begin{equation}\label{tw}
	E_q(h,Q)\le 2\lbr\fiint_{Q}|h-\Gamma|^q\ dz\rbr^\frac{1}{q}.
	\end{equation}
	\end{remark}

The proof of the following lemma can be found in \cite[Equation (85)]{MR2729305}.
\begin{lemma}\label{p_sum}
	Let $p\ne n$, $\sig\in(0,1)$ and $r_j:=\sig^{j}r$. Then there holds that
	\begin{gather*}
	\sum_{1\le j<\infty}\lbr[][\frac{|f|^{\mfa'}(B_{r_j}(x_0))}{r_j^{n-\mfa'}}\rbr[]]^{\frac{1}{\mfa'(p-1)}}\apprle_{(n,p,\sig)}\bM_p^f(B_r(x_0))\txt{and}\sum_{1\le j<\infty}\lbr[[]\frac{|f|^{\mfa'}(B_{r_j}(x_0))}{r_{j}^{n-\mfa'}}\rbr[]]^\frac{1}{\mfa'}\apprle_{(n,\sig)}\bP_{\mfa'}^f(B_r(x_0)).
	\end{gather*}
\end{lemma}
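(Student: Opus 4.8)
Both inequalities are the elementary ``the integral dominates the geometric sum'' comparison for Havin--Maz'ya--Wolff type potentials, so I would treat the two in parallel. The only structural ingredient is that, with $\mfa$ as in \descref{C7}{C7} and $p\ne n$, one has $n-\mfa'>0$: when $p>n$ this reads $n-1>0$, while when $p<n$ we have $\mfa'=\frac{np}{np-n+p}$, whence $n-\mfa'=\frac{n^2(p-1)}{np-n+p}>0$ since $p>1$. Consequently both $s\mapsto |f|^{\mfa'}(B_s(x_0))$ (the integral of a nonnegative function over a growing ball) and $s\mapsto s^{\,n-\mfa'}$ are non-decreasing in $s$, which is exactly what the comparison needs.

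Writing $r_0:=r$, the intervals $(r_{j+1},r_j]$, $j\ge 0$, are pairwise disjoint with union $(0,r]$. I would first fix $j\ge0$ and note that for every $s\in[r_{j+1},r_j]$, the inclusion $B_{r_{j+1}}(x_0)\subset B_s(x_0)$ together with $s\le r_j=\sigma^{-1}r_{j+1}$ gives
\[
\frac{|f|^{\mfa'}(B_s(x_0))}{s^{\,n-\mfa'}}\ \ge\ \frac{|f|^{\mfa'}(B_{r_{j+1}}(x_0))}{r_j^{\,n-\mfa'}}\ =\ \sigma^{\,n-\mfa'}\,\frac{|f|^{\mfa'}(B_{r_{j+1}}(x_0))}{r_{j+1}^{\,n-\mfa'}}.
\]
Raising to the positive power $\frac1{\mfa'(p-1)}$, integrating in $s$ against $\frac{ds}{s}$ over $[r_{j+1},r_j]$, and using $\int_{r_{j+1}}^{r_j}\frac{ds}{s}=\log\frac1\sigma$, one obtains
\[
\int_{r_{j+1}}^{r_j}\lbr\frac{|f|^{\mfa'}(B_s(x_0))}{s^{\,n-\mfa'}}\rbr^{\frac1{\mfa'(p-1)}}\frac{ds}{s}\ \ge\ \sigma^{\frac{n-\mfa'}{\mfa'(p-1)}}\log\tfrac1\sigma\ \lbr\frac{|f|^{\mfa'}(B_{r_{j+1}}(x_0))}{r_{j+1}^{\,n-\mfa'}}\rbr^{\frac1{\mfa'(p-1)}}.
\]
Summing over $j\ge0$, the left-hand side telescopes to $\int_0^r(\cdots)\frac{ds}{s}=\bM_p^f(B_r(x_0))$, while the right-hand side equals $\sigma^{\frac{n-\mfa'}{\mfa'(p-1)}}\log\frac1\sigma$ times $\sum_{j\ge1}\big[|f|^{\mfa'}(B_{r_j}(x_0))/r_j^{\,n-\mfa'}\big]^{1/(\mfa'(p-1))}$, which is precisely the first claimed sum.

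Rearranging yields the first inequality with constant $\big(\sigma^{\frac{n-\mfa'}{\mfa'(p-1)}}\log\frac1\sigma\big)^{-1}$, depending only on $n,p,\sigma$ (indeed $\frac{n-\mfa'}{\mfa'(p-1)}=\frac np$ when $p<n$). The second inequality follows verbatim upon replacing every occurrence of the exponent $\frac1{\mfa'(p-1)}$ by $\frac1{\mfa'}$; there the constant is $\big(\sigma^{(n-\mfa')/\mfa'}\log\frac1\sigma\big)^{-1}$, which can moreover be bounded by $\sigma^{-(n-1)}/\log\frac1\sigma$ since $0<\frac{n-\mfa'}{\mfa'}\le n-1$, so it depends only on $n,\sigma$ as stated. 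I do not expect any genuine obstacle here: the only points deserving a line of care are the sign $n-\mfa'>0$ that underpins the two monotonicities, and the bookkeeping of the constants' dependence; and since the estimate is recorded as \cite[Equation (85)]{MR2729305}, one could alternatively simply quote it.
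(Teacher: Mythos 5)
Your proof is correct: the sign check $n-\mfa'>0$, the monotonicity comparison on each interval $[r_{j+1},r_j]$, the factor $\sigma^{n-\mfa'}\log\frac1\sigma$, and the uniform bound $0<\frac{n-\mfa'}{\mfa'}\le n-1$ justifying the $(n,\sigma)$-dependence of the second constant are all accurate. The paper itself only cites \cite[Equation (85)]{MR2729305} for this lemma, but your argument is exactly the interval-versus-sum comparison it carries out explicitly for the companion \cref{log_sum}, so there is nothing further to add.
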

Similar calculations as in the proof of \cref{p_sum} give the following lemma:
\begin{lemma}\label{log_sum}
	Let $p=n$, $\sig\in(0,1)$ and $r_j:=\sig^{j}r$. Then there holds that
	\begin{equation*}
	\sum_{1\le j<\infty}\lbr[][r_j^{-(n-1)}| f |_{L\log L(B_{r_j}(x_0))}\rbr[]]^\frac{1}{n-1}\apprle_{(\sig)}\int_0^r \lbr[[]s^{-(n-1)}| f |_{L\log L(B_s(x_0))}\rbr[]]^\frac{1}{n-1}\ \frac{ds}{s}.
	\end{equation*}
\end{lemma}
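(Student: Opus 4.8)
The plan is to mimic the standard dyadic-sum-to-integral comparison used in the proof of Lemma \ref{p_sum}, the only new ingredient being that the quantity $|f|_{L\log L(B_s(x_0))}$ is monotone in $s$ (up to the normalization inside the logarithm), which is what makes the Riemann-sum estimate work. First I would record the two elementary facts I will need: (i) for $s \in [r_{j+1}, r_j]$ one has the comparison $r_j \approx_\sigma r_{j+1} \approx_\sigma s$, so that $s^{-(n-1)} \approx_{\sigma} r_j^{-(n-1)}$ uniformly in $j$; and (ii) the map $s \mapsto |f|_{L\log L(B_s(x_0))}$ is non-decreasing, since $B_{s'}(x_0)\subset B_s(x_0)$ for $s'\le s$ and the integrand $|f|\log(e+|f|/\int_{B_s}|f|)$ increases in $s$ pointwise as well (enlarging the domain only enlarges $\int_{B_s}|f|$ in the denominator? — here one must be slightly careful, so I would instead invoke the equivalent norm \cref{llog_form}, i.e. $|f|_{L\log L(B_s(x_0))}\approx \|f\|_{L^{\Phi_1}(B_s(x_0))}$, and monotonicity of the Luxemburg norm under domain inclusion is immediate).

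Next I would convert the sum into an integral. For each $j\ge 1$, using $\tfrac{ds}{s}$ has total mass $\log(1/\sigma)$ on the interval $[r_{j+1},r_j]$, I estimate
\begin{equation*}
\lbr[][r_j^{-(n-1)}| f |_{L\log L(B_{r_j}(x_0))}\rbr[]]^\frac{1}{n-1}
\apprle_{(\sigma)} \int_{r_{j+1}}^{r_j}\lbr[][s^{-(n-1)}| f |_{L\log L(B_{s}(x_0))}\rbr[]]^\frac{1}{n-1}\ \frac{ds}{s},
\end{equation*}
where the left side is bounded by a constant (depending only on $\sigma$) times the minimum of the integrand over $[r_{j+1},r_j]$, and that minimum is realized near the right endpoint by fact (ii) up to the $\sigma$-dependent constant from fact (i). Summing over $j\ge 1$, the intervals $[r_{j+1},r_j]$ are essentially disjoint and their union is $(0,\sigma r)\subset(0,r)$, so
\begin{equation*}
\sum_{1\le j<\infty}\lbr[][r_j^{-(n-1)}| f |_{L\log L(B_{r_j}(x_0))}\rbr[]]^\frac{1}{n-1}
\apprle_{(\sigma)} \int_0^{r}\lbr[][s^{-(n-1)}| f |_{L\log L(B_s(x_0))}\rbr[]]^\frac{1}{n-1}\ \frac{ds}{s},
\end{equation*}
which is the claimed bound.

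The only subtlety — and the step I expect to need the most care — is the monotonicity claim (ii): it is clean for the Luxemburg norm $\|f\|_{L^{\Phi_1}(B_s)}$ but the explicit form $\int_{B_s}|f|\log(e+|f|/\int_{B_s}|f|)$ has the normalization $\int_{B_s}|f|$ in the denominator, which itself grows with $s$, so one cannot argue pointwise monotonicity of the integrand directly. I would therefore route the argument through the norm equivalence \cref{llog_form}, noting $\||f|\|_{L^{\Phi_1}(B_{s'}(x_0))}\le \||f|\|_{L^{\Phi_1}(B_s(x_0))}$ whenever $s'\le s$ by definition of the Luxemburg norm (extending by zero), and only at the very end translate back to the integral expression appearing in \descref{C9}{C9}; the $\Delta_2$-regularity of $\Phi_1$ guarantees the two are comparable with an absolute constant, which gets absorbed into the $\apprle_{(\sigma)}$. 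With this in hand the remaining computation is exactly the Riemann-sum comparison already used for \cref{p_sum}.
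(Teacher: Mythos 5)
Your overall strategy is the same as the paper's --- approximate monotonicity of $s\mapsto |f|_{L\log L(B_s(x_0))}$ combined with a Riemann-sum comparison --- and your route to the monotonicity is a legitimate variant: the paper proves $|f|_{L\log L(B_{r_j}(x_0))}\le 2\,|f|_{L\log L(B_s(x_0))}$ for $s\ge r_j$ by hand, splitting the logarithm via the elementary bound $\tfrac{e+a/A}{e+a/B}\le 1+\tfrac{B}{A}$, whereas you pass through the Luxemburg norm using \cref{llog_form}. Since that equivalence carries absolute (domain-independent) constants, the detour is sound and arguably cleaner than the paper's computation.

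However, the Riemann-sum step as you wrote it fails. You attach the $j$-th term of the sum to the interval $[r_{j+1},r_j]$, which lies to the \emph{left} of $r_j$, and claim the term is controlled by the minimum of the integrand there. On that interval monotonicity goes the wrong way: for $s\le r_j$ you only get $|f|_{L\log L(B_s(x_0))}\lesssim |f|_{L\log L(B_{r_j}(x_0))}$, never the reverse, and no reverse inequality with a $\sigma$-dependent constant can hold --- take $f$ supported in the annulus $B_{r_j}(x_0)\setminus B_{r_j/2}(x_0)$ with $\sigma<\tfrac12$; then the integrand vanishes at $s=r_{j+1}$ while the $j$-th term is positive, so the minimum over $[r_{j+1},r_j]$ cannot control it. (Your remark that ``the minimum is realized near the right endpoint by fact (ii)'' also has the direction backwards: fact (ii) makes the $L\log L$ factor smallest at the \emph{left} endpoint.) The fix is exactly the paper's choice: associate the $j$-th term with the interval $[r_j,r_{j-1}]$ to the \emph{right} of $r_j$, where $|f|_{L\log L(B_{r_j}(x_0))}\le 2\,|f|_{L\log L(B_s(x_0))}$ and $r_j^{-(n-1)}\le \sigma^{-(n-1)}s^{-(n-1)}$ lose only $\sigma$-dependent constants, and the union $\bigcup_{j\ge1}[r_j,r_{j-1}]=(0,r]$ still matches the domain of the integral on the right-hand side.
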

\begin{proof}
	For $r \in [r_j,r_{j-1}]$, we have
	\begin{equation*}
	\begin{array}{rcl}
	|f|_{L\log L(B_{r_j}(x_0))}
	&=&\int_{B_{r_j}(x_0)}|f|\log\lbr \frac{e+\frac{|f|}{\int_{B_{r_j}(x_0)}|f|\ dx}}{e+\frac{|f|}{\int_{B_{r}(x_0)}|f|\ dx}} \lbr e+\frac{|f|}{\int_{B_{r}(x_0)}|f|\ dx}\rbr\rbr\ dx\\
	&\le&\int_{B_{r_j}(x_0)}|f|\log\lbr e+\frac{|f|}{\int_{B_{r}(x_0)}|f|\ dx}\rbr+|f|\log\lbr 1+\frac{\int_{B_{r}(x_0)}|f|\ dx}{\int_{B_{r_j}(x_0)}|f|\ dx}\rbr\ dx,
	\end{array}
	\end{equation*}
	where to obtain the last inequality, we made use of the bound $\frac{e+\frac{|f|}{\int_{B_{r_j}(x_0)}|f|\ dx}}{e+\frac{|f|}{\int_{B_{r}(x_0)}|f|\ dx}}  \leq 1 + \frac{\int_{B_{r}(x_0)}|f|\ dx}{\int_{B_{r_j}(x_0)}|f|\ dx}$.

	Moreover, we see that
	\begin{equation*}
	\begin{array}{rcl}
	\int_{B_{r_j}(x_0)}|f|\log\lbr 1+\frac{\int_{B_{r}(x_0)}|f|\ dx}{\int_{B_{r_j}(x_0)}|f|\ dx}\rbr\ dx
	&\le&\frac{\int_{B_{r}(x_0)}|f|\ dx}{\int_{B_{r_j}(x_0)}|f|\ dx}\int_{B_{r_j}(x_0)}|f|\ dx\\
	&\le&\int_{B_{r}(x_0)}|f|\ dx\\
	&\le& |f|_{L\log L(B_r(x_0))}.
	\end{array}
	\end{equation*}
	Therefore we obtain 
	\begin{equation*}
	\lbr[][r_j^{-(n-1)}| f |_{L\log L(B_{r_j}(x_0))}\rbr[]]^\frac{1}{n-1}
	\le\frac{\log \tfrac{1}{\sig}}{\sig}\int_{r_j}^{r_{j-1}}\lbr[][\frac{| f |_{L\log L(B_r(x_0))}}{r^{n-1}}\rbr[]]^\frac{1}{n-1}\frac{dr}{r}.
	\end{equation*}
\end{proof}

\begin{definition}\label{def7.4}
	Let $\rho$ be fixed and for $Q_{2\rho}\subset\Om_T$, we assume that there exist $\bH_1,\bH_2$ and $\la\ge1$ such that
	\begin{equation}\label{p_la}
	\bH_1^p\fiint_{Q_{2\rho}^\la}|\na u|^p+1\ dz+\bH_2^{p}\mathbf{F}^{\max\{\frac{p}{p-1},p\}}=\la^p.
	\end{equation}
	These constants will eventually be chosen appropriately in \cref{decay_est} and \cref{final_pf}.
\end{definition}

We will need the following simple estimate:
\begin{lemma}\label{p_cancel}
	Suppose $p< 2$ and \cref{p_la} hold. Let $z_0\in Q_{2r}^\la (z_0)\subset Q_{2\rho}$, then we have
	\begin{equation*}
\lbr\fint_{B_{r}(x_0)}(r|f|)^{\mfa'}\ dx\rbr^\frac{1}{\mfa'} \apprle_{(n,p)}\lbr\frac{\la}{\bH_2}\rbr^{p-1}.
	\end{equation*}
\end{lemma}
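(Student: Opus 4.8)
The plan is to read an upper bound for $\bF$ off the intrinsic identity \cref{p_la}, and then to recognize $\bigl(\fint_{B_r(x_0)}(r|f|)^{\mfa'}\,dx\bigr)^{1/\mfa'}$ as a dimensional multiple of a single dyadic contribution to the truncated potential $\bP^f_{\mfa'}$ making up $\bF$ in the singular range. First I would record the structural facts available for $p<2$: here $p<n$, so $\mfa=\tfrac{np}{n-p}$ by \descref{C7}{C7} and $\bF=\bP^f_{\mfa'}(B_{2\rho})$ by \descref{C10}{C10}; moreover $\tfrac{p}{p-1}>p$, so $\max\{\tfrac{p}{p-1},p\}=\tfrac{p}{p-1}$ in \cref{p_la}. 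Discarding the nonnegative term $\bH_1^p\fiint_{Q_{2\rho}^\la}(|\na u|^p+1)\,dz$ from \cref{p_la} leaves $\bH_2^p\,\bF^{\frac{p}{p-1}}\le\la^p$, and raising this to the power $\tfrac{p-1}{p}$ gives $\bF\le\bigl(\tfrac{\la}{\bH_2}\bigr)^{p-1}$.

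Next I would rewrite the left-hand side of the claim geometrically. Since $|B_r(x_0)|=w_n r^n$, a direct computation gives
\[
\fint_{B_r(x_0)}(r|f|)^{\mfa'}\,dx=\frac{1}{w_n}\cdot\frac{|f|^{\mfa'}(B_r(x_0))}{r^{n-\mfa'}},
\qquad\text{hence}\qquad
\Bigl(\fint_{B_r(x_0)}(r|f|)^{\mfa'}\,dx\Bigr)^{1/\mfa'}=w_n^{-1/\mfa'}\Bigl(\frac{|f|^{\mfa'}(B_r(x_0))}{r^{n-\mfa'}}\Bigr)^{1/\mfa'}.
\]
The one fact that must be checked is $n-\mfa'>0$, i.e. $\mfa=p^*>\tfrac{n}{n-1}$, which holds for every $p>1$. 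Granting this, $s\mapsto|f|^{\mfa'}(B_s(x_0))$ is nondecreasing and $s\mapsto s^{n-\mfa'}$ is increasing, so for $s\in[r,2r]$ one has $\tfrac{|f|^{\mfa'}(B_s(x_0))}{s^{n-\mfa'}}\ge 2^{-(n-\mfa')}\tfrac{|f|^{\mfa'}(B_r(x_0))}{r^{n-\mfa'}}$; integrating against $\tfrac{ds}{s}$ over $[r,2r]$, and using that the hypothesis $Q_{2r}^\la(z_0)\subset Q_{2\rho}$ forces $B_{2r}(x_0)\subset B_{2\rho}(x_0)$ together with the monotonicity of $R\mapsto\bP^f_{\mfa'}(B_R(x_0))$ in the radius, yields
\[
\Bigl(\frac{|f|^{\mfa'}(B_r(x_0))}{r^{n-\mfa'}}\Bigr)^{1/\mfa'}\apprle_{(n,p)}\int_r^{2r}\Bigl(\frac{|f|^{\mfa'}(B_s(x_0))}{s^{n-\mfa'}}\Bigr)^{1/\mfa'}\frac{ds}{s}\le\bP^f_{\mfa'}(B_{2\rho}(x_0))=\bF.
\]

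Combining the two steps gives $\bigl(\fint_{B_r(x_0)}(r|f|)^{\mfa'}\,dx\bigr)^{1/\mfa'}\apprle_{(n,p)}\bF\le\bigl(\tfrac{\la}{\bH_2}\bigr)^{p-1}$, which is the claim. There is no genuine obstacle here; the computation is elementary, and the only step worth isolating is the passage from a single dyadic annular average to the full truncated potential, which rests entirely on the sign condition $n-\mfa'>0$, equivalently $\mfa>n'$.
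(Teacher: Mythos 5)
Your proof is correct and follows essentially the same route as the paper: bound the single-scale quantity $\bigl(\fint_{B_r}(r|f|)^{\mfa'}dx\bigr)^{1/\mfa'}$ by $\bF=\bP^f_{\mfa'}(B_{2\rho})$ and then read $\bF\le(\la/\bH_2)^{p-1}$ off \cref{p_la}. The only cosmetic difference is that the paper passes to a nearby dyadic radius and invokes \cref{p_sum}, whereas you reprove the relevant one-scale estimate inline by integrating over $[r,2r]$ against $\tfrac{ds}{s}$ — the same monotonicity argument that underlies \cref{p_sum}, with the sign condition $n>\mfa'$ correctly verified.
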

\begin{proof}
	There exists $k\in\NN$ such that
	$2^{-(k+1)}\rho\le r\le 2^{-k}\rho$. Applying \cref{p_sum} to get
	\begin{equation*}
	\lbr\fint_{B_{r}(x_0)}(r|f|)^{\mfa'}\ dx\rbr^\frac{1}{\mfa'}\apprle_{(n)} \lbr\fint_{B_{2^{-k}\rho}(x_0)}((2^{-k}\rho)|f|)^{\mfa'}\ dx\rbr^\frac{1}{\mfa'}\overset{\text{\cref{p_sum}}}{\apprle}\bF\overset{\cref{p_la}}{\le}\lbr\frac{\la}{\bH_2}\rbr^{p-1}.
	\end{equation*}
	This completes the proof.
\end{proof}

\subsection{Regularity for the homogeneous equation}
Let us enumerate Lemmas will be used in this section. First of all, fundamental regularity results obtained by DiBenedetto in \cite[Chapter VIII-IX]{MR1230384}. More specifically, the Lipschitz estimate is from \cite[Theorems 5.1 and 5.2 of Chapter VIII]{MR1230384} and the oscillation decay estimate can be found in \cite[Section 3 of Chapter IX]{MR1230384}.
\begin{lemma}\label{hom_up}
	Let $\la\ge1$ and $v\in L^p(I_r^\la(t_0),W^{1,p}(B_r(x_0)))$ be a weak solution of 
	\begin{equation*}
	v_t-\dv|\na v|^{p-2}\na v=0\text{ in }Q_r^\la(z_0).
	\end{equation*}
	Suppose there exists $c_*\ge1$ such that
	\begin{equation*}
	\fiint_{Q_r^\la(z_0)}|\na v|^p\ dz\le c_*\la^p.
	\end{equation*}
	Then there holds that
	\begin{gather*}
	\sup_{\frac{1}{2}Q_r^\la(z_0)}|\na v|< c\la\txt{and}\osc\limits_{sQ_r^\la(z_0)}\iprod{\nabla v}{e_i}\leq cs^\alpha\la,
	\end{gather*}
	where $e_i = (0,\ldots,0, \underbrace{1}_{i^{th} place}, 0,\ldots,0)$ with $1 \leq i \leq n$, $c=c(n,p,c_*)$, $\alpha=\alpha(n,p,c_*)\in(0,1)$ and $s\in(0,\tfrac{1}{2})$.  
\end{lemma}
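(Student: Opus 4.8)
The plan is to reduce the statement to the model cylinder $Q_1(0) = B_1(0)\times(-1,1)$ by the intrinsic rescaling that makes the evolutionary $p$-Laplacian self-similar, and then to quote the regularity theory of DiBenedetto. Concretely, set $y := \tfrac{x-x_0}{r}$, $\tau := \tfrac{t-t_0}{\la^{2-p}r^2}$ and define
\[
w(y,\tau) := \tfrac{1}{\la r}\,v\bigl(x_0 + ry,\ t_0 + \la^{2-p}r^2\tau\bigr),\qquad (y,\tau)\in Q_1(0).
\]
A direct computation gives $\na_x v = \la\,\na_y w$ together with $\pa_\tau w - \dv_y\bigl(|\na_y w|^{p-2}\na_y w\bigr) = \la^{1-p}r\bigl(\pa_t v - \dv_x(|\na_x v|^{p-2}\na_x v)\bigr) = 0$, so $w$ is again a weak solution of the homogeneous equation on $Q_1(0)$; moreover, since integral averages are scale invariant and $|\na_x v|^p = \la^p|\na_y w|^p$ under this correspondence, the hypothesis $\fiint_{Q_r^\la(z_0)}|\na v|^p\,dz \le c_*\la^p$ becomes $\fiint_{Q_1(0)}|\na_y w|^p\,dy\,d\tau \le c_*$.

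On the fixed cylinder $Q_1(0)$ one now invokes the classical interior estimates. First, the gradient sup bound of \cite[Theorems 5.1 and 5.2 of Chapter VIII]{MR1230384} yields $\sup_{Q_{1/2}(0)}|\na_y w| \le c(n,p,c_*)$ --- the scaling-deficit exponent appearing there is harmless here, since both the radius and the bound $c_*$ are fixed. Then the gradient H\"older/oscillation decay of \cite[Section 3 of Chapter IX]{MR1230384}, applied on $Q_{1/2}(0)$ with the previous sup bound as the a priori input, gives $\osc_{Q_s(0)}\langle\na_y w,e_i\rangle \le c\,s^\alpha$ for all $s\in(0,\tfrac12)$ and $1\le i\le n$, with $c$ and $\alpha\in(0,1)$ depending only on $n,p,c_*$. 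Undoing the rescaling --- under which $\tfrac12 Q_r^\la(z_0)$ corresponds to $Q_{1/2}(0)$, $sQ_r^\la(z_0)$ to $Q_s(0)$, and $\langle\na_x v,e_i\rangle = \la\langle\na_y w,e_i\rangle$ --- immediately gives the two asserted bounds.

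The only point requiring care is to apply the cited results in the right form: DiBenedetto's gradient bound and gradient-continuity statements are at times phrased using intrinsically scaled cylinders whose time-length is tied to the actual size of $\na v$, whereas here the geometry is completely fixed once we pass to $Q_1(0)$, so no additional intrinsic-scaling iteration is needed beyond the single rescaling above --- this is exactly the setting covered in \cite[Chapters VIII--IX]{MR1230384}. I would also observe that the argument is uniform across the degenerate range $p\ge 2$ and the singular range $\tfrac{2n}{n+2}<p<2$, since DiBenedetto's theory applies throughout, and that for $\la=1$ the statement is contained verbatim in those references.
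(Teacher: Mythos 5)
Your argument is correct and is essentially the paper's own: the paper gives no proof beyond citing DiBenedetto's interior Lipschitz estimate \cite[Theorems 5.1 and 5.2 of Chapter VIII]{MR1230384} and the oscillation decay of \cite[Section 3 of Chapter IX]{MR1230384}, which is exactly what you invoke after the (correctly computed) intrinsic rescaling $w(y,\tau)=\tfrac{1}{\la r}v(x_0+ry,\,t_0+\la^{2-p}r^2\tau)$ that normalizes the cylinder to $Q_1(0)$ and the energy bound to $c_*$. Your added remarks on the harmlessness of the scaling-deficit exponent and the uniformity across the degenerate and singular ranges are accurate and make the reduction explicit.
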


The proof of the following decay estimate can be found in \cite[Theorem 3.1]{MR2968162}.
\begin{lemma}\label{km_alt}
	Let $\la\ge1$ and $v\in L^p(I_r^\la(t_0),W^{1,p}(B_r(x_0)))$ be a weak solution of 
	\begin{equation*}
	v_t-\dv|\na v|^{p-2}\na v=0\text{ in }Q_r^\la(z_0).
	\end{equation*}
	For any $q\in[1,\infty)$, consider $A,B\ge1$ and $\gamma\in(0,1)$. Then there exists $\sig_0=\sig_0(n,p,A,B,\gamma)$ such that if
	\begin{equation}\label{Dib_alt}
	\frac{\la}{B}\le \sup_{\sig_0 Q_r^\la(z_0)}|\na v|\le \sup_{Q_{r}^\la(z_0)}|\na v|\le A\la,
	\end{equation}
	holds, then for any $\sig\in(0,\sig_0)$, the following holds:
	\begin{equation*}
	E_q(\na v,\sig Q_r^\la(z_0))\le \gamma E_q(\na v,Q_r^\la).
	\end{equation*}
\end{lemma}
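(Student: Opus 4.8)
The plan is to reduce, in the non-degenerate regime singled out by \cref{Dib_alt}, the homogeneous $p$-parabolic equation to a uniformly parabolic linear equation for $\na v$ and then invoke the classical interior De Giorgi--Nash decay for such equations. The first step I would carry out is to upgrade the one-sided bound $\la/B\le\sup_{\sig_0 Q_r^\la(z_0)}|\na v|$ to a two-sided pointwise bound on a cylinder whose relative size is controlled by $n,p,A,B$ alone. Since $\sup_{Q_r^\la(z_0)}|\na v|\le A\la$ forces $\fiint_{Q_r^\la(z_0)}|\na v|^p\ dz\le A^p\la^p$, \cref{hom_up} applies with $c_*=A^p$ and gives $\osc_{sQ_r^\la(z_0)}\iprod{\na v}{e_i}\le c(n,p,A)\,s^{\al}\la$ for $s\in(0,\tfrac{1}{2})$ and every $i$, hence $\osc_{sQ_r^\la(z_0)}|\na v|\le c(n,p,A)\,s^{\al}\la$. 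Fixing $s_*=s_*(n,p,A,B)\in(0,\tfrac{1}{2}]$ small enough that this oscillation is $\le\tfrac{\la}{2B}$ on $s_*Q_r^\la(z_0)$, and requiring $\sig_0\le s_*$ so that $\sup_{s_*Q_r^\la(z_0)}|\na v|\ge\sup_{\sig_0 Q_r^\la(z_0)}|\na v|\ge\la/B$, I obtain $\tfrac{\la}{2B}\le|\na v|\le A\la$ throughout $s_*Q_r^\la(z_0)$. Here \cref{Dib_alt} is used only through the lower bound on $\sup_{\sig_0 Q_r^\la(z_0)}|\na v|$, so fixing $s_*$ before $\sig_0$ produces no circularity.

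Next, on $s_*Q_r^\la(z_0)$ the equation is non-degenerate, and I would differentiate $v_t-\dv(|\na v|^{p-2}\na v)=0$ in $x_k$: each $w_k:=\pa_{x_k}v$ solves $\pa_t w_k-\dv(\mathbf A\na w_k)=0$ with $\mathbf A=|\na v|^{p-2}(\mathrm{Id}+(p-2)|\na v|^{-2}\na v\otimes\na v)$, whose eigenvalues $|\na v|^{p-2}$ and $(p-1)|\na v|^{p-2}$ lie, by the first step, between fixed multiples of $\la^{p-2}$ depending only on $p,A,B$. After the intrinsic time rescaling $t\mapsto\la^{p-2}(t-t_0)$ this is a uniformly parabolic equation on a standard cylinder with ellipticity ratio depending only on $n,p,A,B$, and, being linear, for every constant $\Ga$ the function $w_k-\Ga$ solves the same equation while $|w_k-\Ga|$ is a subsolution.

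I would then apply the interior parabolic De Giorgi--Nash estimates inside $s_*Q_r^\la(z_0)$: combining the oscillation decay for the solutions $w_k-\Ga$ with the local boundedness of the subsolutions $|w_k-\Ga|$ by their $L^q$ averages, one obtains $\be=\be(n,p,A,B)\in(0,1)$ and $c=c(n,p,A,B,q)$ with $\osc_{\sig Q_r^\la(z_0)}w_k\le c\,(\sig/s_*)^{\be}\big(\fiint_{s_*Q_r^\la(z_0)}|w_k-(w_k)_{s_*Q_r^\la(z_0)}|^q\ dz\big)^{1/q}$ for every $\sig\le\tfrac{1}{4} s_*$. Summing over $k$, using \cref{tw} together with $E_q(\na v,\sig Q_r^\la(z_0))\le\sqrt n\max_k\osc_{\sig Q_r^\la(z_0)}w_k$, and the volume identity $|Q_r^\la(z_0)|/|s_*Q_r^\la(z_0)|=s_*^{-(n+2)}$ to pass from the average over $s_*Q_r^\la(z_0)$ to $E_q(\na v,Q_r^\la(z_0))$, I arrive at $E_q(\na v,\sig Q_r^\la(z_0))\le C_*\,\sig^{\be}\,E_q(\na v,Q_r^\la(z_0))$ for all $\sig\le\tfrac{1}{4} s_*$, with $C_*=C_*(n,p,A,B,q)$. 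Choosing $\sig_0:=\min\{\tfrac{1}{4} s_*,\,(\ga/C_*)^{1/\be}\}$, which depends only on $n,p,A,B,\ga$ (and the fixed $q$), every $\sig\in(0,\sig_0)$ satisfies $\sig<\tfrac{1}{4} s_*$ and $C_*\sig^{\be}<C_*\sig_0^{\be}\le\ga$, which is the assertion; the case $E_q(\na v,Q_r^\la(z_0))=0$ is trivial since then $\na v$ is constant.

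The step I expect to be the main obstacle is the bookkeeping that keeps $\sig_0$ dependent only on $n,p,A,B,\ga$: the non-degeneracy must be propagated to the fixed relative scale $s_*$ before any De Giorgi constant enters, so that both the rate $\be$ and the constant $C_*$ are frozen and only the final smallness of $\sig_0$ absorbs $\ga$. A secondary, routine technical point is justifying the differentiation of the equation, i.e.\ the interior $W^{2,2}$ and Lipschitz regularity of $v$ on the set where $|\na v|$ is bounded away from $0$ and $\infty$, which is classical for the non-degenerate $p$-parabolic equation.
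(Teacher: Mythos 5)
The paper does not prove \cref{km_alt} at all: it is imported verbatim from \cite[Theorem 3.1]{MR2968162}, so there is no in-paper argument to compare against. Your reconstruction is, in substance, the standard proof of that cited result in the scalar case: propagate the non-degeneracy $\la/B\le\sup_{\sig_0Q}|\na v|$ to a fixed relative scale $s_*(n,p,A,B)$ via the oscillation estimate of \cref{hom_up}, observe that on $s_*Q_r^\la(z_0)$ the two-sided bound $\la/(2B)\le|\na v|\le A\la$ makes the differentiated equation a uniformly parabolic linear equation whose ellipticity ratio (after the intrinsic time rescaling) depends only on $n,p,A,B$, and then run De Giorgi--Nash oscillation decay plus local boundedness for $w_k-\Ga$. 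Your bookkeeping of the quantifiers is also right: $s_*$ is fixed before $\sig_0$, so requiring $\sig_0\le s_*/4$ creates no circularity, and the final choice $\sig_0=\min\{s_*/4,(\ga/C_*)^{1/\be}\}$ has the claimed dependence (up to the harmless dependence on the fixed $q$, which the lemma's notation suppresses).

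One caveat deserves flagging. The paper's title and several of its references speak of \emph{systems}, i.e.\ possibly $\RR^N$-valued $u$. If $v$ is vector-valued, each $w_k=\pa_{x_k}v$ is vector-valued and the differentiated problem is a linear parabolic \emph{system}; De Giorgi--Nash--Moser theory (local boundedness of $|w_k-\Ga|$ as a subsolution, oscillation decay for measurable coefficients) is then not available. The argument still goes through, but the last step must be replaced: in the non-degenerate regime the coefficients $\mathbf A=|\na v|^{p-2}\bigl(\mathrm{Id}+(p-2)|\na v|^{-2}\na v\otimes\na v\bigr)$ are not merely bounded but H\"older continuous with norm controlled by $n,p,A,B$ after rescaling (again by \cref{hom_up}), so one can invoke Schauder/Campanato decay for linear parabolic systems with H\"older coefficients, or the frozen-coefficient perturbation argument of \cite{MR3273649}, in place of De Giorgi--Nash. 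With that substitution the constants retain the dependence $(n,p,A,B,\ga)$ and your conclusion stands; in the scalar setting your proof is complete as written, modulo the routine justification of differentiating the equation that you already acknowledge.
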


\subsection{Comparison estimate when \texorpdfstring{$p \neq n$}.}
Consider $v\in C(I_r^\la(t_0);W^{1,p}(B_r(x_0)))$ be a weak solution of 
\begin{equation}\label{p_hom}
\begin{cases}
v_t-\dv |\na v|^{p-2}\na v=0&\text{ in }Q_r^\la(z_0),\\
v=u&\text{ on }\pa_pQ_r^\la(z_0).
\end{cases}
\end{equation}
Then we have the following estimate:
\begin{lemma}\label{diff}
	Let $u$ be a weak solution of \cref{p_main} and  $Q_r^\la(z_0)\subset\Omega_T$ and \cref{p_la} hold. For weak solution $v$ of \cref{p_hom} if $p\ge2$ and $p\ne n$, then there holds
	\begin{equation}\label{7.6}
	\fiint_{Q_r^\la(z_0)}|\na u-\na v|^p\ dz\apprle_{(n,p)} \lbr\fint_{B_r(x_0)}r^{\mfa'}|f|^{\mfa'}\ dx\rbr^{\frac{1}{\mfa'}\frac{p}{p-1}},
	\end{equation}
	and if $p<2$, then there holds
	\begin{equation}\label{7.7}
	\fiint_{Q_r^\la(z_0)}|\na u-\na v|^p\ dz
	\apprle_{(n,p)} \lbr\fint_{B_r^{\la}(x_0)}r^{\mfa'}|f|^{\mfa'}\ dx\rbr^{\frac{1}{\mfa'}\frac{p}{p-1}}+\lbr\frac{\la}{\bH_1}\rbr^{(2-p)p}\lbr\fint_{B_r^{\la}(x_0)}r^{\mfa'}|f|^{\mfa'}\ dx\rbr^{\frac{p}{\mfa'}}.
	\end{equation}
\end{lemma}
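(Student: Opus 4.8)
The plan is to compare $u$ and $v$ via the standard energy argument, using $u-v$ (extended by zero) as a test function in the weak formulations of \cref{p_main} and \cref{p_hom}. Since $v=u$ on the parabolic boundary of $Q_r^\la(z_0)$ and both solve a parabolic equation, subtracting the two weak formulations and testing with $w := u-v$ kills the time term up to a boundary contribution (which vanishes since $w$ vanishes on $\pa_pQ_r^\la$), leaving
\[
\iint_{Q_r^\la(z_0)}\iprod{|\na u|^{p-2}\na u-|\na v|^{p-2}\na v}{\na w}\ dz = \iint_{Q_r^\la(z_0)} f\, w\ dz.
\]
I would then invoke the monotonicity/$p$-structure inequalities from \cref{p_str}. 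In the degenerate range $p\ge2$, the left-hand side controls $\fiint_{Q_r^\la}|\na u-\na v|^p\ dz$ directly; in the singular range $p<2$, it only controls $\fiint_{Q_r^\la}(|\na u|^2+|\na v|^2)^{\frac{p-2}{2}}|\na u-\na v|^2\ dz$, and the genuine $|\na u-\na v|^p$ must be recovered afterwards using the third estimate in \cref{p_str}.

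**Next I would handle the right-hand side} $\iint f\,w\ dz$. After dividing by $|Q_r^\la(z_0)|$, apply Hölder with exponents $\mfa'$ and $\mfa$ in the space variable (and integrate in time), then use the Sobolev–Poincaré inequality \cref{sob_poin} (for $p<n$, so $\mfa=p^*$) — or the Morrey embedding \cref{Morrey} when $p>n$, as in the argument around \cref{infity_estimate} — together with $\na w = \na u - \na v$ on $Q_r^\la$ and the fact that $w$ vanishes on the lateral boundary, to bound $\fiint_{Q_r^\la}\abs{\frac{w}{r}}^{\mfa}\,\le\,C\,\fiint_{Q_r^\la}|\na u-\na v|^p$ raised to the appropriate power. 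This produces the schematic inequality
\[
\fiint_{Q_r^\la}|\na u-\na v|^p\ dz \apprle \lbr\fint_{B_r(x_0)}r^{\mfa'}|f|^{\mfa'}\ dx\rbr^{\frac{1}{\mfa'}}\lbr\fiint_{Q_r^\la}|\na u-\na v|^p\ dz\rbr^{\frac1{p^*}\cdot\frac{p}{?}}
\]
in the case $p\ge2$; absorbing the gradient term by Young's inequality (the exponent on the right is strictly less than $1$ since $p/p^* = (n-p)/n < 1$) yields \cref{7.6}. One must be slightly careful about the time-variable Hölder step — the time integrals are averaged, so the powers of $\la$ from the intrinsic scaling of $I_r^\la$ should cancel and no extra $\la$ appears in the $p\ge2$ estimate.

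**The main obstacle will be the singular case $p<2$.** There the coercivity only gives control of the weighted quantity $V:=\fiint_{Q_r^\la}(|\na u|^2+|\na v|^2)^{\frac{p-2}{2}}|\na u-\na v|^2\ dz$. To pass from $V$ to $\fiint|\na u-\na v|^p$ one uses $|\na u-\na v|^p \apprle \big(\text{difference of }|\cdot|^{\frac{p-2}2}(\cdot)\big)^2 + \big(\text{same}\big)^p|\na u|^{\frac{(2-p)p}2}$ from \cref{p_str}, and the factor $(|\na u|^2+|\na v|^2)^{\frac{2-p}{2}}$ in the second term must be controlled by the $L^\infty$ (or $L^p$) gradient bound $\frac{\la}{\bH_1}$ coming from \cref{p_la} — this is exactly where the extra term $\lbr\frac{\la}{\bH_1}\rbr^{(2-p)p}\lbr\fint_{B_r^\la}r^{\mfa'}|f|^{\mfa'}\rbr^{\frac p{\mfa'}}$ in \cref{7.7} is generated. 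The Hölder exponents in the right-hand side estimate also change (one uses the $L^2$-Sobolev–Poincaré inequality rather than the $L^p$ one, on the singular cylinders $Q_{\la^{(p-2)/2}r,r^2}$), and bookkeeping the scaling powers of $\la$ attached to the spatial radius — together with \cref{p_cancel} to bound $\big(\fint_{B_r^\la}(r|f|)^{\mfa'}\big)^{1/\mfa'}$ by $(\la/\bH_2)^{p-1}$ where needed — is the delicate part. After a Young's inequality absorption, \cref{7.7} follows.
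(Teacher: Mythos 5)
Your proposal is correct and follows essentially the same route as the paper: test with $u-v$, use the monotonicity inequalities of \cref{p_str}, estimate the source term by H\"older in space (with exponents $\mfa',\mfa$) plus Sobolev--Poincar\'e (or Morrey when $p>n$), absorb by Young, and in the singular range recover $|\na u-\na v|^p$ from the weighted quantity via the third inequality of \cref{p_str}, bounding the extra factor by $(\la/\bH_1)^{(2-p)p/2}$ through \cref{p_la}. The only cosmetic difference is that \cref{p_cancel} is not needed in the lemma itself (the paper invokes it only in \cref{p_le_2}).
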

\begin{proof}
	The proof is analogue to \cite[Lemma 5]{MR2729305} for the case $p\ge 2$ and $p\ne n$ and to \cite[Lemma 3.3]{MR3247381} for the case $p<2$. For the sake of completeness, we present all the details.

We make use of  $u-v$ as a test function for $(u-v)_t-\dv\lbr|\na u|^{p-2}\na u-|\na v|^{p-2}\na v\rbr=g$. Then, we obtain
	\begin{equation*}
	\begin{array}{rcl}
\fiint_{Q_r^\la(z_0)} \pa_t (u-v)^2 \ dz + 	 \underbrace{\fiint_{Q_r^\la(z_0)}\iprod{|\na u|^{p-2}\na u-|\na v|^{p-2}\na v}{\na u-\na v}\ dz}_{=:I}
	\le \underbrace{\fiint_{Q_r^\la(z_0)}|f||u-v|\ dz}_{=:II}.
	\end{array}
	\end{equation*}
	Note that $u-v=0$ at the bottom of the cylinder $Q_r^{\la}(z_0)$ and hence we can ignore the contribution arising from this term. 
	\begin{description}
		\item[Estimate of $I$:]
		Applying \cref{p_str}, we get
		\begin{equation*}
		\fiint_{Q_r^\la(z_0)}\lbr|\na u|^2+|\na v|^2\rbr^\frac{p-2}{2}|\na u-\na v|^2\ dz\apprle I.
		\end{equation*}
		\item[Estimate of $II$:]
		 To estimate this term, we further consider the following subcases:
		   \begin{description}
		   	\item[Case $1 < p < n$:] In this case, we have
		   		\begin{equation*}
		   	\begin{array}{rcl}
		   	II
		   	&\le&\fint_{I_r^\la(t_0)}\lbr\fint_{B_r(x_0)}r^{\mfa'}|f|^{\mfa'}\ dx\rbr^\frac{1}{\mfa'}\lbr\fint_{B_r(x_0)}\abs{\frac{u-v}{r}}^\mfa\ dx\rbr^\frac{1}{\mfa}\ dt\\
		   	&\overset{\redlabel{7.5a}{a}}{\apprle}&\fint_{I_r^\la(t_0)}\lbr\fint_{B_r(x_0)}r^{\mfa'}|f|^{\mfa'}\ dx\rbr^\frac{1}{\mfa'}\lbr\fint_{B_r(x_0)} |\na u-\na v|^p\ dx\rbr^\frac{1}{p}\ dt\\
		   	&\overset{\redlabel{7.5b}{b}}{\le}&C(\gamma)\lbr\fint_{B_r(x_0)}r^{\mfa'}|f|^{\mfa'}\ dx\rbr^{\frac{1}{\mfa'}\frac{p}{p-1}}+\gamma\fiint_{Q_r^\la(z_0)} |\na u-\na v|^p\ dz,
		   	\end{array}
		   	\end{equation*}
		   	where to obtain \redref{7.5a}{a}, we used Poincar$\acute{\text{e}}$ inequality and to obtain \redref{7.5b}{b}, we used Young's inequality.

		   	\item[Case $p>n$:] In this case, we have
		   		\begin{equation*}
		   	\begin{array}{rcl}
		   	II
		   	&\leq& \lbr \fint_{B_{r}(x_0)}|f|\ dx\rbr \fint_{I_r^\la(t_0)}\sup_{B_r(x_0)}|u(\cdot,t)-v(\cdot,t)|\ dt\\
		   	&\overset{\redlabel{7.7a}{a}}{\apprle}&\lbr \fint_{B_{r}(x_0)}r|f|\ dx\rbr \fint_{I_r^\la(t_0)}\lbr\fint_{B_r(x_0)}|\na u-\na v|^p\ dx\rbr^\frac{1}{p} \ dt\\
		   	&\overset{\redlabel{7.7b}{b}}{\le}& C(\gamma)\lbr\fint_{B_{r}(x_0)}r|f|\ dx\rbr^\frac{p}{p-1}+\gamma\fiint_{Q_r^\la(z_0)}|\na u-\na v|^p\ dz,
		   	\end{array}
		   	\end{equation*}
		   	where to obtain \redref{7.7a}{a}, we used \cref{Morrey} and to obtain \redref{7.7b}{b}, we used Young's inequality.
		   \end{description}
	\end{description}
   We now combine the previous estimates to prove the lemma as follows:
   \begin{description}
   	\item[Proof of \cref{7.6}:] Applying \cref{p_str}, we obtain
   	\begin{equation*}
   	\fiint_{Q_r^\la(z_0)}|\na u-\na v|^p\ dz \apprle  C(\gamma)\lbr\fint_{B_r(x_0)}r^{\mfa'}|f|^{\mfa'}\ dx\rbr^{\frac{1}{\mfa'}\frac{p}{p-1}}+\gamma\fiint_{Q_r^\la(z_0)}|\na u-\na v|^p\ dz.
   	\end{equation*}
   	Choosing $\ga$ suitably small gives the desired estimate.
   	   	\item[Proof of \cref{7.7}:] \cref{p_str} implies 
   	   	\begin{equation*}
   	   		\begin{array}{rcl}
   	   			\fiint_{Q_r^\la(z_0)}|\na u-\na v|^p\ dz
   	   			&&\apprle\fiint_{Q_r^\la(z_0)}\lbr|\na u|^2+|\na v|^2\rbr^\frac{p-2}{2}|\na u-\na v|^2\ dz\\
   	   			&&\quad+\lbr\fiint_{Q_r^\la(z_0)}\lbr|\na u|^2+|\na v|^2\rbr^\frac{p-2}{2}|\na u-\na v|^2\ dz\rbr^\frac{p}{2}\lbr\fiint_{Q_r^\la(z_0)}|\na u|^p\ dz\rbr^\frac{2-p}{2}\\
   	   			&&\overset{\cref{p_la}}{\le}\fiint_{Q_r^\la(z_0)}\lbr|\na u|^2+|\na v|^2\rbr^\frac{p-2}{2}|\na u-\na v|^2\ dz\\
   	   			&&\qquad+\lbr\fiint_{Q_r^\la(z_0)}\lbr|\na u|^2+|\na v|^2\rbr^\frac{p-2}{2}|\na u-\na v|^2\ dz\rbr^\frac{p}{2}\lbr\frac{\la}{\bH_1}\rbr^\frac{(2-p)p}{2},
   	   		\end{array}
   	   	\end{equation*}
      	and thus, Young's inequality gives
   	   	\begin{equation}\label{eq7.16}
   	   	\begin{array}{rcl}
   	   	\fiint_{Q_r^\la(z_0)}|\na u-\na v|^p\ dz 
   	   	& \leq & C(\gamma)\lbr\fint_{B_{r}(x_0)}r|f|\ dx\rbr^\frac{p}{p-1}+\gamma\fiint_{Q_r^\la(z_0)}|\na u-\na v|^p\ dz \\
   	   	&& + \lbr\fiint_{Q_r^\la(z_0)}|f||u-v|\ dz \rbr^{\frac{p}{2}}\lbr\frac{\la}{\bH_1}\rbr^\frac{(2-p)p}{2}.
   	   	\end{array}
   	   	\end{equation}
   	   	In order to estimate the last term, we proceed as follows:
   	   		\begin{multline}
   	   	\lbr\fiint_{Q_r^\la(z_0)}|f||u-v|\ dz\rbr^\frac{p}{2}\lbr\frac{\la}{\bH_1}\rbr^\frac{(2-p)p}{2}\\
   	   	\le\lbr\fint_{B_r(x_0)}r^{\mfa'}|f|^{\mfa'}\ dx\rbr^{\frac{1}{\mfa'}\frac{p}{2}}\lbr\fiint_{Q_r^\la(z_0)} |\na u-\na v|^p\ dz\rbr^\frac{1}{2}\lbr\frac{\la}{\bH_1}\rbr^\frac{(2-p)p}{2}\\
   	  	\overset{\redlabel{7.6a}{a}}{\le}C(\gamma)\lbr\frac{\la}{\bH_1}\rbr^{(2-p)p}\lbr\fint_{B_r(x_0)}r^{\mfa'}|f|^{\mfa'}\ dx\rbr^{\frac{p}{\mfa'}}+\gamma\fiint_{Q_r^\la(z_0)} |\na u-\na v|^p\ dz. \label{diff_p_le_n}
   	   	\end{multline}
   	   	Combining \cref{eq7.16} and \cref{diff_p_le_n} followed by choosing $\ga$ small gives the desired estimate.
   \end{description}
	This completes the proof of the lemma.
\end{proof}
 
\begin{corollary}\label{p_le_2} 
	In the case $1< p \leq 2$ under the hypothesis of \cref{diff}, we have
	\begin{equation*}
	\lbr\fiint_{Q_r^\la(z_0)}|\na u-\na v|^p\ dz\rbr^\frac{1}{p}\apprle \lbr\frac{\la}{\bH_1}\rbr^{2-p}\lbr\fint_{B_r(x_0)}\lbr r|f|\rbr^{\mfa'}\ dx\rbr^\frac{1}{\mfa'}.
	\end{equation*}
\end{corollary}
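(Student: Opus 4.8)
The plan is to read the estimate off Lemma~\ref{diff} and then merge the two terms on its right-hand side into one, using Lemma~\ref{p_cancel}. It is convenient to abbreviate
\[
M:=\lbr\fint_{B_r(x_0)}\lbr r|f|\rbr^{\mfa'}\ dx\rbr^{\frac{1}{\mfa'}},
\]
which, up to a dimensional constant and the obvious matching of the underlying ball, is the same quantity that is bounded in Lemma~\ref{p_cancel}. The case $p=2$ is immediate: there $2-p=0$ and \cref{7.6} (valid for $p\ge2$, $p\ne n$) says $\fiint_{Q_r^\la(z_0)}|\na u-\na v|^2\,dz\apprle_{(n,p)}M^2$, so taking square roots already gives the assertion. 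Hence I would assume $1<p<2$ from now on and work with \cref{7.7}.

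Next I would take $p$-th roots in \cref{7.7}. Using the elementary inequality $(a+b)^{1/p}\apprle a^{1/p}+b^{1/p}$ (valid since $p\ge1$), this yields
\[
\lbr\fiint_{Q_r^\la(z_0)}|\na u-\na v|^p\ dz\rbr^{\frac1p}\apprle_{(n,p)}M^{\frac{1}{p-1}}+\lbr\frac{\la}{\bH_1}\rbr^{2-p}M.
\]
The second term is already in the desired form, so the whole matter reduces to absorbing $M^{1/(p-1)}$ into it. For this I would use the identity $\frac{1}{p-1}=1+\frac{2-p}{p-1}$, which gives $M^{1/(p-1)}=M\cdot\bigl(M^{1/(p-1)}\bigr)^{2-p}$, together with Lemma~\ref{p_cancel}, which bounds $M^{1/(p-1)}$ by a constant times $\la/\bH_2$. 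Since $2-p\ge0$, this produces $M^{1/(p-1)}\apprle_{(n,p)}M\lbr\la/\bH_2\rbr^{2-p}$; once $\bH_1$ and $\bH_2$ are identified (this will be arranged in the normalization fixed in \cref{decay_est} and \cref{final_pf}), substituting back into the displayed inequality gives exactly the claim.

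I do not expect any genuine obstacle here, since all the analytic content is already contained in Lemma~\ref{diff} (the comparison argument: testing the difference equation against $u-v$, the $p$-structure inequalities of Lemma~\ref{p_str}, and the Sobolev--Poincar\'e/Morrey bounds for $u-v$) and in Lemma~\ref{p_cancel}/Lemma~\ref{p_sum} (the Riesz-potential summation converting the pointwise datum into $\bF$). The only step requiring care is the final absorption, namely keeping careful track of the exponent $2-p$ and of which auxiliary constant ($\bH_1$ or $\bH_2$) is produced at each stage.
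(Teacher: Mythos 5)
Your proposal is correct and follows essentially the same route as the paper: the paper also splits the exponent via $\frac{p}{p-1}=p+\frac{(2-p)p}{p-1}$ (the $p$-th power of your identity $\frac{1}{p-1}=1+\frac{2-p}{p-1}$) and invokes \cref{p_cancel} to convert the extra factor into $\lbr\la/\bH_2\rbr^{(2-p)p}$, then absorbs it into the second term of \cref{7.7}. Your explicit remark about matching $\bH_1$ versus $\bH_2$ is, if anything, slightly more careful than the paper's own write-up.
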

\begin{proof}
	Let us estimate the first term in \cref{7.7}. Since $p \leq 2$, we have $p \leq p'$ which gives
	\begin{equation*}
	\begin{array}{rcl}
	\lbr\fint_{B_r(x_0)}r^{\mfa'}|f|^{\mfa'}\ dx\rbr^{\frac{1}{\mfa'}\frac{p}{p-1}}
	&=&\lbr\fint_{B_r(x_0)}r^{\mfa'}|f|^{\mfa'}\ dx\rbr^{\frac{p}{\mfa'}}\lbr\fint_{B_r(x_0)}r^{\mfa'}|f|^{\mfa'}\ dx\rbr^{\frac{1}{\mfa'}\frac{(2-p)p}{p-1}}\\
	&\overset{\redlabel{7.24a}{a}}{\le}&\lbr\frac{\la}{\bH_1}\rbr^{(2-p)p}\lbr\fint_{B_r(x_0)}r^{\mfa'}|f|^{\mfa'}\ dx\rbr^{\frac{p}{\mfa'}},
	\end{array}
	\end{equation*}
	where to obtain \redref{7.24a}{a}, we used \cref{p_cancel}. This completes the proof of the corollary.
\end{proof}
\subsection{Comparison estimate when \texorpdfstring{$p = n$}.}
Consider the weak solution $v\in C(I_r^\la(t_0);W^{1,p}(B_r(x_0)))$ of 
\begin{equation}\label{p_hom_n}
\begin{cases}
v_t-\dv |\na v|^{p-2}\na v=0&\text{ in }Q_r^\la(z_0),\\
v=u&\text{ on }\pa_pQ_r^\la(z_0).
\end{cases}
\end{equation}
\begin{lemma}\label{diff_n}
	Let $u$ be a weak solution of \cref{p_main}, $Q_r^\la(z_0)\subset\Omega_T$ and \cref{p_la} hold. Suppose that $p= n$, then there exists $r_0=r_0(n)$ such that for any $r\in(0,r_0)$ and for weak solution $v$ of \cref{p_hom_n}, the following holds
	\begin{equation*}
	\lbr\fiint_{Q_r^\la(z_0)}|\na u-\na v|^n\ dz\rbr^\frac{1}{n}\apprle_{(n)}\lbr\fint_{B_r(x_0)}r|f|\log\lbr e+\frac{|f|}{\fint_{B_{r}(x_0)}|f|\ dx}\rbr\ dx\rbr^\frac{1}{n-1}.
	\end{equation*}
\end{lemma}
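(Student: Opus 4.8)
The plan is to follow the proof of \cref{diff} in the regime $p\ge 2$ (note $p=n\ge 2$): use $u-v$ as a test function in $(u-v)_t-\dv\big(|\na u|^{n-2}\na u-|\na v|^{n-2}\na v\big)=f$, discard the parabolic term — it is nonnegative because $u-v$ vanishes on $\pa_pQ_r^\la(z_0)$, in particular on the bottom of the cylinder — and apply the first inequality of \cref{p_str} to reduce everything to bounding $II:=\fiint_{Q_r^\la(z_0)}|f|\,|u-v|\ dz$; concretely,
\[
\fiint_{Q_r^\la(z_0)}|\na u-\na v|^n\ dz\apprle_{(n)} \fiint_{Q_r^\la(z_0)}\iprod{|\na u|^{n-2}\na u-|\na v|^{n-2}\na v}{\na u-\na v}\ dz\le II .
\]
The only new point compared with \cref{diff} is that at the endpoint $p=n$ neither $W^{1,p}\hookrightarrow L^{p^*}$ (used when $p<n$) nor the Morrey embedding (used when $p>n$) is available, so $II$ must be handled through the $L\log L$–$\exp L$ duality; this is where the restriction $r<r_0(n)$ enters. (As in the case $p\ge 2$ of \cref{diff}, the structural relation \cref{p_la} plays no role here.)

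For a.e.\ $t\in I_r^\la(t_0)$ the slice $w_t:=u(\cdot,t)-v(\cdot,t)$ lies in $W_0^{1,n}(B_r(x_0))$, since $v=u$ on $\pa B_r(x_0)$. Applying the generalized H\"older inequality \cref{orl_holder} on $B_r(x_0)$ with the complementary pair $\Phi_1,\tilde{\Phi}_2$, and then \cref{llog_form} to rewrite the $L^{\Phi_1}$–norm of $f$ as $\int_{B_r(x_0)}|f|\log\big(e+|f|/\lVert f\rVert_{L^1(B_r(x_0))}\big)\,dx$, gives
\[
\int_{B_r(x_0)}|f|\,|w_t|\ dx\apprle\Big(\int_{B_r(x_0)}|f|\log\big(e+\tfrac{|f|}{\lVert f\rVert_{L^1(B_r(x_0))}}\big)\ dx\Big)\,\lVert w_t\rVert_{L^{\tilde{\Phi}_2}(B_r(x_0))} .
\]
For the last factor I invoke the Trudinger-type estimate: applying \cref{J_N} on $\Omega=B_r(x_0)$ with $M\approx\big(\int_{B_r(x_0)}|\na w_t|^n\,dx\big)^{1/n}$ — admissible since $\int_{B_\rho\cap B_r(x_0)}|\na w_t|\,dx\le c_n\rho^{\,n-1}\lVert\na w_t\rVert_{L^n(B_r(x_0))}$ by H\"older — together with Poincar\'e's inequality to absorb the mean of $w_t$, one obtains $\lVert w_t\rVert_{L^{\tilde{\Phi}_2}(B_r(x_0))}\apprle_{(n)} r\big(\fint_{B_r(x_0)}|\na w_t|^n\,dx\big)^{1/n}$, the constant being purely dimensional once $r_0=r_0(n)$ is fixed so that $|B_{r_0}|\le 1$ (which also renders harmless the passage from $\lVert f\rVert_{L^1(B_r)}$ to the average $\fint_{B_r}|f|$ inside the logarithm). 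Thus the slicewise estimate reads
\[
\int_{B_r(x_0)}|f|\,|w_t|\ dx\apprle_{(n)}|B_r(x_0)|\,\Big(\fint_{B_r(x_0)} r|f|\log\big(e+\tfrac{|f|}{\fint_{B_r(x_0)}|f|}\big)\ dx\Big)\Big(\fint_{B_r(x_0)}|\na w_t|^n\,dx\Big)^{1/n}.
\]

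Dividing by $|Q_r^\la(z_0)|=|B_r(x_0)|\,|I_r^\la(t_0)|$, integrating in $t$, and using Jensen's inequality for the time average of $t\mapsto\big(\fint_{B_r(x_0)}|\na w_t|^n\,dx\big)^{1/n}$ yields
\[
II\apprle_{(n)}\Big(\fint_{B_r(x_0)} r|f|\log\big(e+\tfrac{|f|}{\fint_{B_r(x_0)}|f|}\big)\ dx\Big)\Big(\fiint_{Q_r^\la(z_0)}|\na u-\na v|^n\ dz\Big)^{1/n}.
\]
Inserting this into the energy estimate above and abbreviating $X:=\fiint_{Q_r^\la(z_0)}|\na u-\na v|^n\,dz$, with $A$ the bracketed $f$–quantity, gives $X\apprle_{(n)}A\,X^{1/n}$; Young's inequality with exponents $n$ and $\tfrac{n}{n-1}$ (or simply dividing by $X^{1/n}$ when $X>0$) then gives $X^{1/n}\apprle_{(n)}A^{1/(n-1)}$, which is exactly the claimed estimate.

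The \emph{main obstacle} is the slicewise estimate of $II$: at $p=n$ there is no $L^\infty$ — nor even $L^{p^*}$ — control on $u-v$ coming from $\na(u-v)\in L^n$, only exponential integrability, so a logarithm must be paid, and producing it in precisely the $L\log L$ form of $f$ consistent with \descref{C9}{C9} is what dictates the bookkeeping of the Orlicz normalizations and hence the choice $r_0=r_0(n)$. Everything else runs in complete parallel with the case $p\ne n$ of \cref{diff}.
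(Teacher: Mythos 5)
Your argument is correct and is essentially the paper's proof: the energy estimate with $u-v$ as test function, the monotonicity inequality of \cref{p_str}, the $L\log L$--$\exp L$ duality via \cref{orl_holder} and \cref{llog_form}, and \cref{J_N} with $r_0=r_0(n)$ chosen so small that the exponential bound upgrades to $\lVert u-v-\avgs{u-v}{B_r}\rVert_{L^{\tilde{\Phi}}(B_r)}\apprle \lVert\nabla (u-v)\rVert_{L^n(B_r)}$; the paper merely normalizes first by the scaling $\tu=\la^{-1}u(x_0+\cdot\,,t_0+\la^{2-p}\cdot)$ and splits the right-hand side into a mean-free part and a mean part, rather than bounding $\lVert w_t\rVert_{L^{\tilde{\Phi}}}$ directly as you do. The one imprecision is your parenthetical claim that passing from $\lVert f\rVert_{L^1(B_r)}$ to $\fint_{B_r}|f|$ inside the logarithm is harmless --- that substitution costs a factor of order $1+\log(1/|B_r|)$, unbounded as $r\to0$ --- but the paper's own proof likewise stops at the $\lVert f\rVert_{L^1}$-normalized form (which is the form actually used later through \descref{C9}{C9}), so you have proved exactly what the paper proves.
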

\begin{proof}
	We will prove the lemma at $r_0$ which will be determined according to \cref{diff_n_radi}. For $(x,t)\in Q_{r_0}$, we define 
	\begin{equation*}
	\tu(x,t):=\la^{-1} u\lbr x_0+x,t_0+\la^{2-p}t\rbr \txt{and} \tf(x):=\la^{1-p}f\lbr x_0+x
	\rbr.
	\end{equation*}
	Then $\tu\in C(I_{r_0};L^2(B_{r_0}(x_0)))\cap L^p(I_{r_0};W^{1,p}(B_{r_0}))$ is a weak solution of 
	\begin{equation*}
	\tu_t-\dv|\na \tu|^{p-2}\na \tu=\tf\text{ in }Q_{r_0}.
	\end{equation*}
	Let $\tv\in C(I_{r_0};L^2(B_{r_0}))\cap L^{n}(I_{r_0};W^{1,n}(B_{r_0}))$ be the weak solution of
	\begin{equation*}
	\begin{cases}
	\tv_t-\dv |\na \tv|^{n-2}\na \tv=0&\text{ in }Q_{r_0},\\
	\tv=\tu&\text{ on }\partial_pQ_{r_0}.
	\end{cases}
	\end{equation*}
	Using $\tu-\tv$ as a test function for  $(\tu-\tv)_t-\dv\lbr|\na \tu|^{p-2}\na \tu-|\na \tv|^{p-2}\na \tv\rbr=\tf$, the standard energy estimate gives the following inequality
	\begin{equation*}
\fiint_{Q_r^\la(z_0)} \pa_t (\tu-\tv)^2 \ dz + 	 \underbrace{\fiint_{Q_r^\la(z_0)}\iprod{|\na \tu|^{p-2}\na \tu-|\na \tv|^{p-2}\na \tv}{\na \tu-\na \tv}\ dz}_{=:I}
\le \underbrace{\fiint_{Q_r^\la(z_0)}|\tf||\tu-\tv|\ dz}_{=:II}.
	\end{equation*}
	\begin{description}[leftmargin=12pt]
		\item [Estimate of $I$:] Applying \cref{p_str}, we get
		\begin{equation*}
		\fiint_{Q_{r_0}}|\na \tu-\na \tv|^n\ dz\apprle\fiint_{Q_{r_0}}\lbr|\na \tu|^2+|\na \tv|^2\rbr^\frac{n-2}{2}|\na \tu-\na \tv|^2\ dz\apprle_{(n)}II.
		\end{equation*}
		\item [Estimate of $II$:]  We will split this integral into two parts as follow:
		\begin{equation*}
		\begin{array}{rcl}
		II\le \fiint_{Q_{r_0}}|\tf||\tu-\tv-\avgs{\tu-\tv}{B_{r_0}}|\ dz+\fiint_{Q_{r_0}}|\tf||\avgs{\tu-\tv}{B_{r_0}}|\ dz:=II_1+II_2.
		\end{array}
		\end{equation*}
		
		\begin{description}[leftmargin=0pt]
			\item [Estimate of $II_1$:]
			Applying \cref{orl_holder} with $\Phi(s)=(1+s)\log(1+s)-s$ and $\tilde{\Phi}(s)=\exp(s)-s-1$, we get 
			\begin{equation*}
			II_1\apprle \fint_{I_{r_0}}\frac{1}{|B_{r_0}|}\lVert \tf\rVert_{L^\Phi(B_{r_0})} \lVert \tu-\tv-\avgs{\tu-\tv}{B_{r_0}}\rVert_{L^{\tilde{\Phi}}(B_{r_0})}\ dt.
			\end{equation*}
			Note that for all $s>0$, we have 
			\begin{equation*}
			\begin{array}{rcl}
			\int_{B_{r_0}}\tilde{\Phi}\lbr\frac{|\tu-\tv-\avgs{\tu-\tv}{B_{r_0}}|}{s}\rbr\ dx
			&\le&\int_{B_{r_0}}\exp\lbr\frac{|\tu-\tv-\avgs{\tu-\tv}{B_{r_0}}|}{s}\rbr\ dx,
			\end{array}
			\end{equation*}
			and $(\tu-\tv)(\cdot,t)\in W^{1,n}_0(B_{r_0})$ satisfies the hypothesis of \cref{J_N} with $M=\lbr\int_{B_{r_0}}|\na \tu-\na \tv|^n\ dx\rbr^\frac{1}{n}$.  Thus, taking $\sig=\sig(n)=\frac{1}{2^n}w_n\sig_0$ in  \cref{J_N}, we get
			\begin{equation}\label{diff_n_radi}
			\int_{B_{r_0}}\exp\lbr\frac{\sig|\tu-\tv-\avgs{\tu-\tv}{B_{r_0}}|}{\lbr\int_{B_{r_0}}|\na \tu-\na \tv|^n\ dx\rbr^\frac{1}{n}}\rbr\ dx\le c(n)r_0^n.
			\end{equation}
			Now, we choose  $r_0=r_0(n)$ small such that $c(n)r_0^n\le 1$ where $c(n)$ is the  constant in \cref{diff_n_radi}. This implies
			\begin{equation*}
			\frac{1}{\sig}\lbr\int_{B_{r_0}}|\na \tu-\na \tv|^n\ dx\rbr^\frac{1}{n}\le s\Longrightarrow\int_{B_{r_0}}\tilde{\Phi}\lbr\frac{|\tu-\tv-\avgs{\tu-\tv}{B_{r_0}}|}{s}\rbr\ dx\le 1,
			\end{equation*}
			and thus we have
			\begin{equation*}
			\lVert \tu-\tv-\avgs{\tu-\tv}{B_{r_0}}\rVert_{L^{\tilde{\Phi}}(B_{r_0})}\le \frac{1}{\sig}\lbr\int_{B_{r_0}}|\na \tu-\na \tv|^n\ dx\rbr^\frac{1}{n}.
			\end{equation*}
			Therefore, applying Young's inequality, we get
			\begin{equation*}
			\begin{array}{rcl}
			II_1
			&\apprle& r_0^{-(n-1)}\lVert \tf\rVert_{L^\Phi(B_{r_0})}\fint_{I_{r_0}}\lbr\fint_{B_{r_0}}|\na \tu-\na \tv|^n\ dx\rbr^\frac{1}{n}\ dt\\
			&\le&C(\gamma)\lbr r_0^{-(n-1)}\lVert \tf\rVert_{L^\Phi(B_{r_0})}\rbr^\frac{n}{n-1}+\gamma\fiint_{Q_{r_0}}|\na \tu-\na \tv|^n\ dz.
			\end{array}
			\end{equation*}
			\item [Estimate of $II_2$:] Applying Poincar$\acute{\text{e}}$ inequality in the spatial direction followed by Young's inequality, we get
			\begin{equation*}
			\begin{array}{rcl}
			II_2
			&=&\fint_{B_{r_0}}|\tf|\ dx\fint_{I_{r_0}}|(\tu-\tv)_{B_{r_0}}|\ dt\\
			&\apprle&\fint_{B_{r_0}}r_0|\tf|\ dx\lbr\fiint_{Q_{r_0}}|\na \tu-\na \tv|^n\ dz\rbr^\frac{1}{n}\\
			&\le&C(\gamma)\lbr\fint_{B_{r_0}}r_0|\tf|\ dx\rbr^\frac{n}{n-1}+\gamma\fiint_{Q_{r_0}}|\na \tu-\na \tv|^n\ dz.
			\end{array}
			\end{equation*}
		\end{description}
	\end{description}
	Therefore taking $\gamma=\gamma(n)\in(0,1)$ small enough, we obtain 
	\begin{equation*}
	\fiint_{Q_{r_0}}|\na \tu-\na \tv|^n\ dz\apprle\lbr r_0^{-(n-1)}\lVert \tf\rVert_{L^\Phi(B_{r_0})}\rbr^\frac{n}{n-1}+\lbr\fint_{B_{r_0}}r_0|\tf|\ dx\rbr^\frac{n}{n-1}.
	\end{equation*}
	And we apply \cref{l1_llog} to get
	\begin{equation*}
	\fiint_{Q_{r_0}}|\na \tu-\na \tv|^n\ dz\apprle\lbr\fint_{B_{r_0}}r_0|\tf|\log\lbr e+\frac{|\tf|}{\lVert \tf\rVert_{L^1(B_{r_0})}}\rbr \ dx\rbr^\frac{n}{n-1}.
	\end{equation*}
	Scaling back from $|\na \tu-\na \tv|$ to $\na u - \na v|$ gives the required estimate and this proves the lemma.
\end{proof}
\subsection{Decay estimate}\label{decay_est}

Let us first fix some notation:
	\begin{itemize}
	\item Let $z_0\in Q_{\rho}$ be a Lebesgue point of $\na u$ and $\sum_{j=1}^{\infty}\bF_j(z_0) < \infty$,
	\item $\sig=\sig(n,p)\in(0,1/4)$ will be determined in \cref{pre_decay},
	\item $\bH_1$  depending on $\sig,n$ and $p$ will be determined according to \cref{km},
	\item $\bH_2$ depending on $\sig,n$ and $p$ will be determined according to \cref{fix_h_2} and \cref{decay},
	\item For $j\ge 1$, we denote $\rho_j:=\lbr\frac{\sig}{2}\rbr^{j-1}\rho$,  $Q_j=Q_{\rho_j}^\la(z_0)$ and $B_j=B_{\rho_j}(x_0)$,
	
	\item We take
	\begin{equation*}
	\bF_j(z_0) = \bF_j:=
	\begin{cases}
	\lbr[][\frac{|f|^{\mfa'}(B_{\rho_j}(x_0))}{\rho_j^{n-\mfa'}}\rbr[]]^\frac{1}{\mfa'}&\text{ if }p<2,\\
	\lbr[][\frac{|f|^{\mfa'}(B_{\rho_j}(x_0))}{\rho_j^{n-\mfa'}}\rbr[]]^\frac{1}{\mfa'(p-1)}&\text{ if }p\ge2\text{ and }p\ne n,\\
	\lbr[][\rho_j^{-(n-1)}| f|_{L\log L(B_{\rho_j}(x_0))}\rbr[]]^\frac{1}{n-1}&\text{ if }p=n.	
	\end{cases}
	\end{equation*}
\end{itemize}
Let us define
\begin{equation*}
G_j:=\lbr\fiint_{Q_j}|\na u|^p\ dz\rbr^\frac{1}{p}+\lbr\frac{\sig}{2}\rbr^{-\frac{n+2}{p}}\eduj.
\end{equation*}
From \cref{p_la} and \cref{tw}, we see that
\begin{equation*}
G_1\le 3\lbr\frac{\sig}{2}\rbr^{-\frac{n+2}{p}}\lbr\fiint_{Q_1}|\na u|^p \ dz\rbr^\frac{1}{p}\le 3\lbr\frac{\sig}{2}\rbr^{-\frac{n+2}{p}}2^\frac{n+2}{p}\lbr\fiint_{Q_{2\rho}^\la}|\na u|^p\ dz\rbr^\frac{1}{p}\le 3\lbr\frac{\sig}{2}\rbr^{-\frac{n+2}{p}}2^\frac{n+2}{p}\frac{1}{\bH_1}\la.
\end{equation*}
Therefore taking $\bH_1=\bH_1(\sig,n,p)$, we may assume the following : There exists $j_0\ge1$ such that 
\begin{gather}\label{km}
G_{j_0}\le\frac{1}{64}\la\txt{ and }G_j>\frac{1}{64}\la\text{ for }j>j_0.
\end{gather}
If \cref{km} does not hold, then this trivially implies $\lim_{j \rightarrow \infty} G_j \leq \tfrac{1}{64}\la$ which proves \cref{thm_3} after making use of \cref{def7.4}.

In this subsection, we will prove the following proposition.
\begin{proposition}\label{intr_infty}
	Let $u$ be a weak solution of \cref{p_main}. Suppose \cref{p_la} holds. There exist $\bH_1\ge 1$ and $\bH_2\ge1$ depending on $n$ and $p$ such that if \cref{p_la} holds for some $\la\ge1$, then
	\begin{equation*}
	|\nabla u(z_0)| \apprle \la.
	\end{equation*}
\end{proposition}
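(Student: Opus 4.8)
The strategy is an exit-time iteration along the dyadic family of intrinsic cylinders $Q_j$: on each $Q_j$ one compares $u$ with the solution of the homogeneous $p$-parabolic equation and propagates a geometric decay of the excess $\eduj$, in the spirit of \cite{MR3273649}. If the alternative \cref{km} fails, then $\{j:G_j\le\tfrac1{64}\la\}$ is infinite, so along a subsequence $|(\na u)_{Q_j}|\le\lbr\fiint_{Q_j}|\na u|^p\ dz\rbr^{1/p}\le G_j\le\tfrac1{64}\la$; since $z_0$ is a Lebesgue point of $\na u$, $(\na u)_{Q_j}\to\na u(z_0)$, hence $|\na u(z_0)|\le\tfrac1{64}\la$ and we are done. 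So assume \cref{km}, fix $j_0$ as there, and for $j\ge j_0$ let $v_j$ solve \cref{p_hom} (or \cref{p_hom_n} if $p=n$) on $Q_j$ with $v_j=u$ on $\pa_pQ_j$. Testing the equations for $u$ and $v_j$ against $u-v_j$, as in the proof of \cref{diff}, gives $\fiint_{Q_j}|\na v_j|^p\ dz\apprle_{(n,p)}\fiint_{Q_j}|\na u|^p\ dz+\fiint_{Q_j}|\na u-\na v_j|^p\ dz$, while \cref{diff,p_le_2,diff_n} bound, in all of the cases $p<2$, $2\le p\ne n$ and $p=n$, the comparison term by $\lbr C(\la,\bH_1)\bF_j\rbr^p$, where $C(\la,\bH_1)=C(n,p)$ for $p\ge2$ and $C(\la,\bH_1)=C(n,p)(\la/\bH_1)^{2-p}$ for $p<2$. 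By \cref{p_sum,log_sum}, $\sum_{j\ge j_0}\bF_j\apprle_{(n,p,\sig)}\bF$, and \cref{p_la} forces $\bF\le(\la/\bH_2)^{\min\{1,p-1\}}$; hence the total comparison error $\sum_{j\ge j_0}C(\la,\bH_1)\bF_j$ can be made $\le\tfrac1{64}\la$ by taking $\bH_1$ and $\bH_2$ large. This smallness, coming entirely from \cref{p_la}, is the only resource we have.

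The core of the iteration is a one-step lemma: under a suitable upper bound $\fiint_{Q_j}|\na v_j|^p\ dz\le c_*\la^p$, \cref{hom_up} gives $\sup_{\frac12Q_j}|\na v_j|\le A\la$, and one runs the dichotomy of \cref{km_alt} on $\tfrac12Q_j$ (with parameters $A$, $B$, $\gamma$, producing $\sig_0$, and with $\sig$ small chosen so that $\sig\cdot\tfrac12Q_j=Q_{j+1}$). Either the degenerate branch of \cref{Dib_alt} holds, i.e. $\sup_{\frac{\sig_0}{2}Q_j}|\na v_j|<\la/B$; then, since $Q_{j+1}\subset\tfrac{\sig_0}{2}Q_j$, one has $\fiint_{Q_{j+1}}|\na v_j|^p\ dz<(\la/B)^p$, and adding the comparison error gives $G_{j+1}<\tfrac1{64}\la$ — which, for $j+1>j_0$, contradicts the defining property of $j_0$. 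Hence the non-degenerate branch always occurs for $j\ge j_0$, so $\edvjj\le\gamma\edvj$, and feeding this through \cref{tw}, the comparison bound and the change of cylinders $Q_j\mapsto\tfrac12Q_j\mapsto Q_{j+1}$ yields
\[
\edujj\le\tfrac12\,\eduj+C(n,p,\sig)\,C(\la,\bH_1)\,\bF_j\qquad(j\ge j_0).
\]

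Iterating this recursion together with $E_p(\na u,Q_{j_0})\le(\sig/2)^{\frac{n+2}{p}}G_{j_0}\le\tfrac1{64}\la$ and the smallness of $\sum_{j\ge j_0}C(\la,\bH_1)\bF_j$ gives $\sum_{j\ge j_0}\eduj\le\tfrac1{16}\la$; since $|(\na u)_{Q_{i+1}}-(\na u)_{Q_i}|\apprle_{(n,p)}E_p(\na u,Q_i)$ (H\"older plus the dyadic enlargement of cylinders), the averages $(\na u)_{Q_j}$ form a Cauchy sequence with $|(\na u)_{Q_j}|\le|(\na u)_{Q_{j_0}}|+C(n,p)\sum_{i\ge j_0}E_p(\na u,Q_i)\apprle_{(n,p)}\la$, and therefore $\fiint_{Q_j}|\na u|^p\ dz\apprle_{(n,p)}|(\na u)_{Q_j}|^p+\eduj^p\apprle_{(n,p)}\la^p$. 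This last bound is exactly the hypothesis that the one-step lemma requires (with $c_*=c_*(n,p)$), so a single induction on $j\ge j_0$ closes everything. Letting $j\to\infty$ and using once more that $z_0$ is a Lebesgue point, $(\na u)_{Q_j}\to\na u(z_0)$, whence $|\na u(z_0)|\apprle_{n,p}\la$, which is \cref{intr_infty}.

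The delicate point — and the only real obstacle — is the apparently circular chain of dependences $c_*\to A\to\sig_0\to\sig\to(\text{the constants in the recursion and in the increment bound})\to c_*$, together with the fact that the smallness required of $\bH_1,\bH_2$ is itself measured in those same $\sig$-dependent constants. This is resolved by fixing the parameters strictly in the order $\gamma$, $B$, $\sig$ (and hence the geometry $\rho_j=(\sig/2)^{j-1}\rho$), and only afterwards $\bH_1,\bH_2$ — which is precisely what the auxiliary results \cref{pre_decay}, \cref{km}, \cref{fix_h_2} and \cref{decay} are for (the first fixing $\sig$ so that $A(n,p,c_1)$ is consistent with the $c_1$ produced by the summation, the second fixing $\bH_1$ so that $G_1\le\tfrac1{64}\la$, the last two fixing $\bH_2$ so that the total comparison error is small) — and by verifying the degenerate-case estimate $G_{j+1}<\tfrac1{64}\la$ separately in each growth regime $p<2$, $2\le p\ne n$, $p=n$ and $p>n$.
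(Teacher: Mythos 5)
Your overall strategy coincides with the paper's: the exit-time index $j_0$ from \cref{km}, the comparison estimates \cref{diff}, \cref{p_le_2}, \cref{diff_n} summed via \cref{p_sum} and \cref{log_sum} and made small through \cref{p_la}, the excess-decay recursion \cref{pre_decay}, and the induction of \cref{claim8} are all as in the paper; your treatment of the case where \cref{km} fails (Lebesgue point along the infinite subsequence with $G_j\le\tfrac{1}{64}\la$) is also correct.

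The gap is in how you verify the non-degeneracy hypothesis \cref{Dib_alt}. In your degenerate branch you infer $G_{j+1}<\tfrac{1}{64}\la$ from $\sup_{\frac{\sig_0}{2}Q_j}|\na v_j|<\la/B$ plus the comparison error. But $G_{j+1}$ carries the weight $\lbr\tfrac{\sig}{2}\rbr^{-\frac{n+2}{p}}$ in front of $E_p(\na u,Q_{j+1})$, and the only bound on $E_p(\na v_j,Q_{j+1})$ that the degenerate branch gives you is $O(\la/B)$; so the contradiction requires $B\gtrsim\lbr\tfrac{\sig}{2}\rbr^{-\frac{n+2}{p}}$, while $\sig$ must be taken below $\sig_0=\sig_0(n,p,A,B,\gamma)$, which shrinks as $B$ grows. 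Fixing the parameters ``in the order $\gamma$, $B$, $\sig$'' does not break this loop, because the constraint on $B$ itself involves $\sig$. The paper never needs a $\sig$-dependent $B$: it verifies the lower bound of \cref{Dib_alt} directly at scale $j+k$ with $k=k(n,p)$ large, using the second half of \cref{hom_up}, namely the H\"older oscillation decay $\osc_{\frac{s}{2}Q_j}\na v_j\le As^{\al}\la$, so that in \cref{7.31eq} the factor $\lbr\tfrac{\sig}{2}\rbr^{-\frac{n+2}{p}+\al(k-1)}$ is bounded uniformly in $\sig$ once $\al(k-1)>\frac{n+2}{p}$; this yields $\sup_{Q_{j+1}}|\na v_j|\ge\tfrac{1}{128}\la$ with the absolute constant $B=128$, after which $\sig_0$, $\sig$ and finally $\bH_2$ can be fixed in that order. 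Your argument closes if, in the degenerate branch, you derive the contradiction at scale $j+m$ for such an $m=m(n,p)$ and invoke the oscillation decay to control $E_p(\na v_j,Q_{j+m})$; as written, the one-step dichotomy at scale $j+1$ does not.
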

The proof follows by induction argument applied to \cref{claim8}. Thus let us prove the following claim:

		\begin{claim}\label{claim8}
 Suppose there exists $j_1 > j_0$ such that  for all $j_0 \leq j \leq j_1$, the following is satisfied:
		\begin{equation}\label{ind}
		\lbr\fiint_{Q_j}|\na u|^p\ dz\rbr^\frac{1}{p}+\eduj\le\la.
		\end{equation}
		Then, the following holds for $j_1+1$:
		\begin{equation}\label{ind_c}
		\lbr\fiint_{Q_{j_1+1}}|\na u|^p\ dz\rbr^\frac{1}{p}+E_p(\na u,Q_{j_1+1})\le\la.\end{equation}
		
		\end{claim}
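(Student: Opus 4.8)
The plan is the classical comparison–and–iteration scheme for gradient potential estimates: on each intrinsic cylinder $Q_j$, $j_0\le j\le j_1$, freeze the coefficients by comparing $u$ with the solution $v^{(j)}$ of the homogeneous $p$-Laplace system on $Q_j$ with boundary datum $u$, transfer the regularity of $v^{(j)}$ down to the next cylinder $Q_{j+1}=(\sig/2)Q_j$, pay a controlled error coming from $f$, and then sum the resulting recursions from $j_0$ up to $j_1+1$. For the comparison step, \cref{ind} at $j$ together with the standard parabolic energy estimate gives $\fiint_{Q_j}|\na v^{(j)}|^p\,dz\apprle\fiint_{Q_j}|\na u|^p\,dz\le\la^p$, so \cref{hom_up} yields $\sup_{\tfrac12 Q_j}|\na v^{(j)}|\le c_1\la$ with $c_1=c_1(n,p)$ and the oscillation decay $\osc_{sQ_j}\iprod{\na v^{(j)}}{e_i}\le cs^\alpha\la$, while \cref{diff}, \cref{p_le_2} and \cref{diff_n} (according to whether $p\ge2$, $p<2$ or $p=n$, with \cref{p_la} in force, and with $\rho_j<r_0$ when $p=n$) give
\[
\lbr\fiint_{Q_j}|\na u-\na v^{(j)}|^p\,dz\rbr^{1/p}\apprle \Theta_j\,\bF_j,\qquad \Theta_j:=\begin{cases}1,&p\ge2,\\[2pt](\la/\bH_1)^{2-p},&p<2,\end{cases}
\]
the same holding on $Q_{j+1}$ up to the fixed volume factor $(\sig/2)^{-(n+2)/p}$. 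The point is that, by \cref{p_sum}, \cref{log_sum} and \cref{p_la} (using $\la\ge1$), the total data contribution $\sum_{j\ge1}(\sig/2)^{-(n+2)/p}\Theta_j\bF_j\apprle_{\sig}\Theta\bF$ is bounded by $\la$ times a quantity made as small as we wish by enlarging $\bH_2$ (and, for $p<2$, also $\bH_1$).

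Next comes the dichotomy, applied to $v^{(j)}$ on $\tfrac12 Q_j$ (on which the sup bound required by \cref{km_alt} holds, with $A=c_1$ and $B$ a large constant to be fixed). If $\sup_{\frac{\sig_0}{2}Q_j}|\na v^{(j)}|<\la/B$, then, since $Q_{j+1}\subset\frac{\sig_0}{2}Q_j$ once $\sig\le\sig_0$, the gradient of $v^{(j)}$ is $<\la/B$ throughout $Q_{j+1}$; combined with the comparison bound and with $B,\bH_2$ large this forces $G_{j+1}\le c_0\la$ for a small absolute $c_0$, \emph{irrespective of $G_j$}, and in particular $E_p(\na u,Q_{j+1})\le(\sig/2)^{(n+2)/p}c_0\la$ (``degenerate step''). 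Otherwise \cref{km_alt} applies and $E_p(\na v^{(j)},Q_{j+1})\le\gamma\,E_p(\na v^{(j)},\tfrac12 Q_j)\le c(n,p)\gamma\,(E_p(\na u,Q_j)+\Theta_j\bF_j)$; with \cref{tw} and the comparison bound this gives the two inequalities
\[
E_p(\na u,Q_{j+1})\le\gamma\,E_p(\na u,Q_j)+c_{\sig}\,\Theta_j\bF_j,\qquad G_{j+1}\le G_j+2(\sig/2)^{-(n+2)/p}E_p(\na u,Q_{j+1}),
\]
the second because $|(\na u)_{Q_{j+1}}|\le|(\na u)_{Q_j}|+(\sig/2)^{-(n+2)/p}E_p(\na u,Q_j)\le G_j$ and by the definition of $G_{j+1}$.

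For the iteration, let $J\in\{j_0,\dots,j_1+1\}$ be one more than the last degenerate step in $[j_0,j_1]$, or $J=j_0$ if there is none; then either $G_J\le c_0\la$ or $G_J=G_{j_0}\le\tfrac1{64}\la$, and in both cases $E_p(\na u,Q_J)\le(\sig/2)^{(n+2)/p}G_J$. Steps $J,\dots,j_1$ are non‑degenerate, so summing the first recursion from $J$ onward gives $\sum_{j>J}E_p(\na u,Q_j)\le\tfrac{\gamma}{1-\gamma}E_p(\na u,Q_J)+\tfrac{c_{\sig}}{1-\gamma}\sum_k\Theta_k\bF_k$, and telescoping the second,
\[
G_{j_1+1}\le G_J+2(\sig/2)^{-(n+2)/p}\!\!\sum_{j=J+1}^{j_1+1}\!\!E_p(\na u,Q_j)\le G_J+\tfrac{2\gamma}{1-\gamma}G_J+2(\sig/2)^{-(n+2)/p}\tfrac{c_{\sig}}{1-\gamma}\sum_k\Theta_k\bF_k\le\tfrac12\la,
\]
where the dangerous factor $(\sig/2)^{-(n+2)/p}$ in front of the excess sum is absorbed by the $(\sig/2)^{(n+2)/p}$ hidden in $E_p(\na u,Q_J)\le(\sig/2)^{(n+2)/p}G_J$, and the data sum contributes a small multiple of $\la$. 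Choosing the constants in the order $\gamma$ small, then $B$ large, then $\sig<\sig_0(n,p,A,B,\gamma)$, then $\bH_1$ large (so that \cref{km} supplies $j_0$) and finally $\bH_2$ large gives $G_{j_1+1}\le\la$; since $\lbr\fiint_{Q_{j_1+1}}|\na u|^p\,dz\rbr^{1/p}+E_p(\na u,Q_{j_1+1})\le G_{j_1+1}$, this is \cref{ind_c}.

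The only genuinely delicate point is this last bookkeeping: the excess decay produced by \cref{km_alt} must beat the volume‑ratio loss $(\sig/2)^{-(n+2)/p}$ paid at every passage to the next intrinsic cylinder. This is exactly why one iterates the \emph{scaled} excess $G_j$ rather than $\lbr\fiint|\na u|^p\rbr^{1/p}+E_p$ directly, why it is crucial that $G_{j_0}\le\tfrac1{64}\la$ controls $E_p(\na u,Q_{j_0})$ only up to the factor $(\sig/2)^{(n+2)/p}$, and why the cascade of constant choices must be made in precisely that order; the degenerate alternative and the case split $p\ge2$ / $p<2$ / $p=n$ are routine once \cref{diff}, \cref{p_le_2} and \cref{diff_n} are in hand.
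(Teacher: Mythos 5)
Your overall architecture (comparison with the homogeneous solution $v_j$ on $Q_j$, excess decay via \cref{km_alt}, telescoping of the scaled excess $G_j$, and the final cascade of constants) matches the paper's, and your non-degenerate recursion and the bookkeeping $G_{j+1}\le G_j+2(\sig/2)^{-\frac{n+2}{p}}E_p(\na u,Q_{j+1})$ are sound. The genuine gap is in your \emph{degenerate alternative}. To conclude $G_{j+1}\le c_0\la$ from $\sup_{Q_{j+1}}|\na v_j|<\la/B$ you must control the term $(\sig/2)^{-\frac{n+2}{p}}E_p(\na u,Q_{j+1})$ inside $G_{j+1}$; the only available bound is $E_p(\na v_j,Q_{j+1})\le 2\sup_{Q_{j+1}}|\na v_j|\le 2\la/B$ (the oscillation decay $\osc \na v_j\apprle \sig^{\alpha}\la$ is useless here since $\alpha<1\le\frac{n+2}{p}$), so you need $B\apprge (\sig/2)^{-\frac{n+2}{p}}$. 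But $\sig$ must be taken below $\sig_0(n,p,A,B,\gamma)$ from \cref{km_alt}, which depends on $B$ and shrinks as $B$ grows. Your stated order of choices ($\gamma$, then $B$, then $\sig<\sig_0$) therefore does not close: $B$ must dominate a negative power of a quantity that is only fixed after $B$, and \cref{km_alt} provides no quantitative decay rate of $\sig_0$ in $B$ that would let one break this circle.

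The paper avoids the dichotomy altogether, and this is the one idea missing from your proposal: it uses the \emph{second} half of \cref{km}, namely $G_{j+k}>\frac{1}{64}\la$ for every $j+k>j_0$, to verify the lower bound in \cref{Dib_alt} unconditionally for all $j_0\le j\le j_1$. Concretely, writing $G_{j+k}\le \lbr\fiint_{Q_{j+k}}|\na v_j|^p\,dz\rbr^{1/p}+3(\sig/2)^{-\frac{n+2}{p}(k+1)}\lbr\fiint_{Q_j}|\na u-\na v_j|^p\,dz\rbr^{1/p}+2(\sig/2)^{-\frac{n+2}{p}}E_p(\na v_j,Q_{j+k})$, the last term is killed by the oscillation decay of \cref{hom_up} once $k>1+\frac{n+2}{\alpha p}$ (a choice depending only on $n,p$), and the middle term by taking $\bH_2$ large; this forces $\frac{1}{128}\la\le\sup_{Q_{j+1}}|\na v_j|$, so the "degenerate" branch never occurs and $B$ is an absolute constant ($128$), chosen before $\sig$. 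If you replace your dichotomy by this exit-time argument, the rest of your proof goes through essentially as written.
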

		
		\begin{proof}[Proof of \cref{claim8}]
			The proof proceeds in two steps.
			\begin{description}[leftmargin=0pt]
		\item [\textcolor{cyan}{STEP 1:}] Let $j_0\le j\le j_1$ and $v_j\in C(I_{\rho_j}^\la(t_0);L^2(B_{\rho_j}(x_0)))\cap L^p(I_{\rho_j}^\la(t_0);W^{1,p}(B_{\rho_j}(x_0)))$ be the weak solution of
		\begin{equation*}
		\begin{cases}
		\partial_tv_j-\dv|\na v_j|^{p-2}\na v_j=0&\text{ in }Q_j,\\
		v_j=u&\text{ on }\partial_pQ_j.
		\end{cases}
		\end{equation*}
		In this step, we want to apply \cref{km_alt} which requires verifying the hypothesis in  \cref{Dib_alt}.

		Applying \cref{diff} when $p \geq 2$ or \cref{p_le_2} when $ 1<p\leq 2$ and \cref{diff_n} along with \cref{p_la} and \cref{p_sum}, we deduce that
		\begin{equation}\label{ind_diff}
			\begin{array}{rcl}
			\lbr\fiint_{Q_j}|\na u-\na v_j|^p\ dz\rbr^\frac{1}{p}\apprle_{(n,p)}\la^{\max\{0,2-p\}}\bF_j&\le& \la^{\max\{0,2-p\}}\sum_{1\le j<\infty} \bF_j \\
			&\overset{\cref{p_la}}{\le}&\mathbf{C}\frac{\la}{\bH_2^{\min\{p-1,1\}}}.
			\end{array}
		\end{equation}
		
		Noting that $\bH_2 \geq 1$, combining \cref{ind} and \cref{ind_diff} gives
		\begin{equation*}
		\lbr\fiint_{Q_j}|\na v_j|^p\ dz\rbr^\frac{1}{p}\le\lbr\fiint_{Q_j}|\na u-\na v_j|^p\ dz\rbr^\frac{1}{p}+\lbr\fiint_{Q_j}|\na u|^p\ dz\rbr^\frac{1}{p}\apprle_{(n,p)}\la,
		\end{equation*}
		and thus \cref{hom_up} implies that there exists $A=A(n,p)$ and $\alpha=\alpha(n,p)\in(0,1)$ such that for any $s\in(0,1)$
		\begin{gather}\label{j_osc}
		\sup_{\frac{1}{2}Q_j}|\na v_j|\le A\la\txt{and}\osc\limits_{\frac{s}{2}Q_{j}}\na v_j\le As^{\alpha}\la.
		\end{gather}
		This proves the upper bound of the hypothesis \cref{Dib_alt} holds.

		We now prove the lower bound of the hypothesis \cref{Dib_alt} holds.  Let $1<k\in\NN$ to be determined later, then the following sequence of estimates hold:
		\begin{equation}\label{7.30eq}
		\begin{array}{rcl}
		\frac{1}{64}\la
		\overset{\cref{km}}{<} G_{j+k}
		&\overset{\redlabel{ind1_a}{a}}{\le}&\lbr\fiint_{Q_{j+k}}|\na v_j|^p\ dz\rbr^\frac{1}{p}+\lbr\fiint_{Q_{j+k}}|\na u-\na v_j|^p\ dz\rbr^\frac{1}{p}\\
		&&+2\lbr\frac{\sig}{2}\rbr^{-\frac{n+2}{p}}\lbr\fiint_{Q_{j+k}}|\na u-(\na v_j)_{Q_{j+k}}|^p\ dz\rbr^\frac{1}{p}\\
		&\overset{\redlabel{ind1_b}{b}}{\le}&\lbr\fiint_{Q_{j+k}}|\na v_j|^p\ dz\rbr^\frac{1}{p}+3\lbr\frac{\sig}{2}\rbr^{-\frac{n+2}{p}}\lbr\fiint_{Q_{j+k}}|\na u-\na v_j|^p\ dz\rbr^\frac{1}{p}\\
		&&\qquad\qquad\qquad+2\lbr\frac{\sig}{2}\rbr^{-\frac{n+2}{p}}\lbr\fiint_{Q_{j+k}}|\na v_j-(\na v_j)_{Q_{j+k}}|^p\ dz\rbr^\frac{1}{p},
		\end{array}
		\end{equation}
		where to obtain \redref{ind1_a}{a} and  \redref{ind1_b}{b}, we used triangle inequality for $\|\cdot\|_p$.
Note that the following bounds hold:
		\begin{equation}\label{7.31eq}
		2\lbr\frac{\sig}{2}\rbr^{-\frac{n+2}{p}}\lbr\fiint_{Q_{j+k}}|\na v_j-(\na v_j)_{Q_{j+k}}|^p\ dz\rbr^\frac{1}{p}\overset{\cref{j_osc}}{\le} 2A\lbr\frac{\sig}{2}\rbr^{-\frac{n+2}{p}+\alpha(k-1)}\la\le 2A\lbr\frac{1}{2}\rbr^{-\frac{n+2}{p}+\alpha(k-1)}\la\le \frac{1}{256}\la,
		\end{equation}
		and
		\begin{equation}
		\label{fix_h_2}
		3\lbr\frac{\sig}{2}\rbr^{-\frac{n+2}{p}}\lbr\fiint_{Q_{j+k}}|\na u-\na v_j|^p\ dz\rbr^\frac{1}{p}\overset{\cref{ind_diff}}{\le} 3\lbr\frac{\sig}{2}\rbr^{-\frac{n+2}{p}(k+1)}\mathbf{C}\frac{\la}{\bH_2^{\min\{p-1,1\}}}\la\le \frac{1}{256}\la,
		\end{equation}
		provided $k=k(n,p)>1+\frac{n+2}{\alpha p}$ and $\bH_2(\sig,n,p)$ are large enough.

		Combining \cref{7.31eq} and \cref{fix_h_2} into \cref{7.30eq}, we have 
		\begin{equation*}
		\frac{1}{128}\la\le\lbr\fiint_{Q_{j+k}}|\na v_j|^p\ dz\rbr^\frac{1}{p}\le \sup_{Q_{j+k}}|\na v_j|\le \sup_{Q_{j+1}}|\na v_j|.
		\end{equation*}

		This shows that \cref{Dib_alt} is satisfied, which says that  for any $\gamma\in(0,1)$ and $\sig=\sig(\gamma,n,p)$, the following conclusion holds:
		\begin{equation}\label{hom_decay}
		E_p(\na v_j,Q_{j+1})\le \gamma E_p(\na v_j,\tfrac{1}{2} Q_j).
		\end{equation}
		
		\item [\textcolor{cyan}{STEP 2:}] In this step, we will prove that the induction conclusion \cref{ind_c} holds. In order to do this,  let us first  estimate the decay of $\eduj$ as follows:
		\begin{equation*}
		\begin{array}{rcl}
		\edujj
		&\le&2\lbr\fiint_{Q_{j+1}}|\na u-(\na v_j)_{Q_{j+1}}|^p\ dz\rbr^\frac{1}{p}\\
		&\le&2\lbr\fiint_{Q_{j+1}}|\na u-\na v_j|^p\ dz\rbr^\frac{1}{p}+2\edvjj\\
		&\overset{\cref{hom_decay}}{\le}&2\lbr\frac{\sig}{2}\rbr^{-\frac{n+2}{p}}\lbr\fiint_{Q_{j}}|\na u-\na v_j|^p\ dz\rbr^{\frac{1}{p}}+2\gamma\edvj.
		\end{array}
		\end{equation*}
		Similarly, there holds
		\begin{equation*}
		\begin{array}{rcl}
		\edvj
		&\le& 2^{1+\frac{n+2}{p}}\lbr\fiint_{Q_{j}}|\na u-\na v_j|^p\ dz\rbr^\frac{1}{p}+2E_p(\na u,\tfrac{1}{2}Q_{j})\\
		&\le&2^{1+\frac{n+2}{p}}\lbr\fiint_{Q_{j}}|\na u-\na v_j|^p\ dz\rbr^\frac{1}{p}+2^{2+\frac{n+2}{p}}E_p(\na u,Q_{j}).
		\end{array}    	   	
		\end{equation*}
		Thus, using \cref{ind_diff}, we get
		\begin{equation}\label{ppre_decay}
		\edujj\le  C(n,p)\lbr\frac{\sig}{2}\rbr^{-\frac{n+2}{p}} \la^{\max\{0,2-p\}}\bF_j+2^{3+\frac{n+2}{p}}\gamma\eduj.
		\end{equation}
		
		We take $\gamma=\frac{1}{2^{4+\frac{n+2}{p}}}$ which in turn fixes  $\sig=\sig(n,p)$ according to \cref{hom_decay} so that \cref{ppre_decay} becomes
		\begin{equation}\label{pre_decay}
		\edujj\le \frac{1}{2}\eduj
		+ \la^{\max\{0,2-p\}}\bF_j.
		\end{equation}
		It follows that
		\begin{equation}\label{decay}
		\begin{array}{rcl}
		\sum_{j=j_0}^{j_1}\edujj
		&\le& E_p(\na u,Q_{j_0})+C\sum_{j=j_0}^{j_1} \la^{\max\{0,2-p\}}\bF_j\\
		&\overset{\cref{km},\cref{ind_diff}}{\le}&\frac{1}{64}\lbr\frac{\sig}{2}\rbr^{\frac{n+2}{p}}\la+\mathbf{C}\frac{\la}{\bH_2^{\min\{p-1,1\}}}\\
		&\le& \frac{1}{32}\lbr\frac{\sig}{2}\rbr^{\frac{n+2}{p}}\la,
		\end{array}
		\end{equation}
		provided $\bH_2=\bH_2(n,p)$ is large enough.	Since there holds
		\begin{equation*}
		|(\na u)_{Q_{j+1}}-(\na u)_{Q_{j}}|\le \fiint_{Q_{j+1}}|\na u-(\na u)_{Q_j}|\ dz\le \lbr\fiint_{Q_{j+1}}|\na u-(\na u)_{Q_j}|^p\ dz\rbr^\frac{1}{p}\le\lbr\frac{\sig}{2}\rbr^{-\frac{n+2}{p}}\eduj,
		\end{equation*}
		we have
		\begin{equation}\label{pre_end}
		|(\na u)_{Q_{j_1+1}}|\le|(\na u)_{Q_{j_0}}|+\lbr\frac{\sig}{2}\rbr^{-\frac{n+2}{p}}\sum_{j=j_0}^{j_1}\eduj \overset{\cref{decay}}{\le}\lbr\fiint_{Q_{j_0}}|\na u|^p\ dz\rbr^\frac{1}{p}+\frac{1}{32}\la\overset{\cref{km}}{\le}\frac{1}{16}\la.
		\end{equation}
		Hence, combining \cref{decay} and \cref{pre_end} along with the fact that $\sigma \in (0,\tfrac14)$, we obtain
		\begin{equation*}
		\lbr\fiint_{Q_{j_1+1}}|\na u|^p\ dz\rbr^\frac{1}{p}+E_p(\na u,Q_{j_1+1})\le |(\na u)_{Q_{j_1+1}}|+2E_p(\na u,Q_{j_1+1})\le\la.
		\end{equation*}
	\end{description}
	This completes the proof of claim.
\end{proof}

\subsection{Proof of \texorpdfstring{\cref{thm_3}}.}\label{final_pf}

The proof of \cref{intr_infty} requires the hypothesis \cref{p_la} hold, which we will prove in this subsection, thus completing the proof of the theorem. Let $\bH_1\ge1$ and $\bH_2\ge1$ be fixed as in \cref{intr_infty} and we show the existence of $\la\geq 1$ satisfying \cref{p_la} as follows:
\begin{description}[leftmargin=12pt]
	\item [Case $p\ge2$: ] For $\mu\ge1$, let us define 
	\begin{equation*}
	h(\mu):=\mu^p-\lbr\bH_1^p\fiint_{Q_{2\rho}^\mu}|\na u|^p+1\ dz+\bH_2^p\bF^{p}\rbr.
	\end{equation*}
	Then we see that $h(\mu)$ is a continuous function satisfying  $h(1)<0$ and $\lim_{\mu\to\infty}h(\mu)=\infty$ because there holds
	\begin{equation*}
	\begin{array}{rcl}
	h(\mu)
	&=&\mu^p-\lbr\mu^{p-2}\bH_1^p\frac{1}{|Q_{2\rho}|}\iint_{Q_{2\rho}^\mu}|\na u|^p+1\ dz+\bH_2^p\bF^{p}\rbr\\
	&\ge&\mu^p-\lbr\mu^{p-2}\bH_1^p\fiint_{Q_{2\rho}}|\na u|^p+1\ dz+\bH_2^p\bF^{p}\rbr.
	\end{array}
	\end{equation*}
	Thus there exists $\la>1$ such that $h(\la)=0$ or in particular, the hypothesis \cref{p_la} holds. 
	Also, $h(\la)=0$ implies $\la^p\ge\bH_2^p\bF^p$ and
	\begin{equation*}
	\begin{array}{rcl}
	\la^p
	=\bH_1^p\fiint_{Q_{2\rho}^\la}|\na u|^p+1\ dz+\bH_2^p\bF^p\le\la^{p-2}\bH_1^p\fiint_{Q_{2\rho}}|\na u|^p+1\ dz+\la^{p-1}\bH_2\bF,
	\end{array}
	\end{equation*}
	which gives
	\begin{equation*}
	\la^2\le\bH_1^p\fiint_{Q_{2\rho}}|\na u|^p+1\ dz+\la\bH_2\bF\le \bH_1^p\fiint_{Q_{2\rho}}|\na u|^p+1\ dz+\frac{1}{2}\la^2+\frac{1}{2}\bH_2^2\bF^2.
	\end{equation*}
	Thus, we obtain 
	\begin{equation}\label{stop_1}
	\begin{array}{rcl}
	\la\apprle \lbr\fiint_{Q_{2\rho}}|\na u|^p+1\ dz\rbr^\frac{1}{2}+\bF.
	\end{array}
	\end{equation}
	Substituting \cref{stop_1} into \cref{intr_infty} gives
	\begin{equation*}
	|\nabla u(z_0)| \le \la\apprle_{(n,p)}\lbr\fiint_{Q_{2\rho}}|\na u|^p+1\ dz\rbr^\frac{1}{2}+\bF.
	\end{equation*}

	\item [Case $\frac{2n}{n+2}<p<2$:] Similarly, for $\mu\ge1$ and $\rho^\mu:=\mu^{\frac{p-2}{2}}\rho$, we define
	\begin{equation*}
	h(\mu):=\mu^p-\lbr\bH_1^p\fiint_{Q_{2\rho^\mu}^\mu}|\na u|^p+1\ dz+\bH_2^p\bF^\frac{p}{p-1}\rbr.
	\end{equation*}
	Analogous to the degenerate case, we can find  $\la>\max\{1,\bH_2\bF^\frac{1}{p-1}\}$ such that $h(\la)=0$. 
	Since there holds that
	\begin{equation*}
	0<\frac{p}{d}=\frac{p(n+2)-2n}{2}=p-\frac{(2-p)n}{2},
	\end{equation*}
	we see that
	\begin{equation*}
	\la^p\leq   \la^{p-\frac{p}{d}}\bH_1^p\fiint_{Q_{2\rho}}|\na u|^p+1\ dz+\la^{p-\frac{p}{d}}\lbr\bH_2\bF^\frac{1}{p-1}\rbr^\frac{p}{d}.
	\end{equation*}
	Thus, we get
	\begin{equation*}
	\la^{\frac{p}{d}}\le\bH_1^p\fiint_{Q_{2\rho}}|\na u|^p+1\ dz+\lbr\bH_2\bF^\frac{1}{p-1}\rbr^\frac{p}{d}.
	\end{equation*}
	Combining with \cref{intr_infty}, we obtain 
	\begin{equation*}
	|\nabla u(z_0)| \apprle_{(n,p)}\lbr\fiint_{Q_{2\rho}}|\na u|^p+1\ dz\rbr^\frac{d}{p}+\bF^\frac{1}{p-1}.
	\end{equation*}
\end{description}
\begin{remark}
	We remark that \cref{p_la} is equivalent to 
	\begin{equation*}
	\bH_1^p\fiint_{Q_{2\rho}^\la}|\na u|^p+1\ dz+\la^{\max\{1,p-1\}}\bH_2\mathbf{F}=\la^p.
	\end{equation*}
	Applying Young's inequality, one can recover \cref{p_cancel} and  \cref{p_le_2}. The Proof of \cref{thm_3} also follows analogously.
\end{remark}

\section*{Data availability statement}
Data sharing not applicable to this article as no datasets were generated or analysed during the current study.

\section*{References}

\end{document}